\def\numberlikeadb{\global\def\theequation{\thesection.\arabic{equation}}}
\newtheorem{theorem}{Theorem}[section]
\newtheorem{lemma}[theorem]{Lemma}
\newtheorem{corollary}[theorem]{Corollary}
\newtheorem{definition}[theorem]{Definition}
\newtheorem{proposition}[theorem]{Proposition}
\newtheorem{remark}[theorem]{Remark}
\numberwithin{equation}{section}
\begin{document}

\title{Variance-Gamma approximation via Stein's method}
\author{Robert E. Gaunt\footnote{Department of Statistics,
University of Oxford, 1 South Parks Road, OXFORD OX1 3TG, UK; supported by an EPSRC research grant. }
}

\date{March 2014} 
\maketitle

\begin{abstract}Variance-Gamma distributions are widely used in financial modelling and contain as special cases the normal, Gamma and Laplace distributions.  In this paper we extend Stein's method to this class of distributions.  In particular, we obtain a Stein equation and smoothness estimates for its solution.  This Stein equation has the attractive property of reducing to the known normal and Gamma Stein equations for certain parameter values.  We apply these results and local couplings to bound the distance between sums of the form $\sum_{i,j,k=1}^{m,n,r}X_{ik}Y_{jk}$, where the $X_{ik}$ and $Y_{jk}$ are independent and identically distributed random variables with zero mean, by their limiting Variance-Gamma distribution.  Through the use of novel symmetry arguments, we obtain a bound on the distance that is of order $m^{-1}+n^{-1}$ for smooth test functions.  We end with a simple application to binary sequence comparison. 
\end{abstract}

\noindent{{\bf{Keywords:}}} Stein's method, Variance-Gamma approximation, rates of convergence

\noindent{{{\bf{AMS 2010 Subject Classification:}}} 60F05

\section{Introduction}In 1972, Stein \cite{stein} introduced a powerful method for deriving bounds for normal approximation.  Since then, this method has been extended to many other distributions, such as the Poisson \cite{chen 0}, Gamma \cite{luk}, \cite{nourdin1}, Exponential \cite{chatterjee}, \cite{pekoz1} and Laplace \cite{dobler}, \cite{pike}.  Through the use of differential or difference equations, and various coupling techniques, Stein's method enables many types of dependence structures to be treated, and also gives explicit bounds for distances between distributions.

At the heart of Stein's method lies a characterisation of the target distribution and a corresponding characterising differential or difference equation.  For example, Stein's method for normal approximation rests on the following characterization of the normal distribution, which can be found in Stein \cite{stein2}, namely $Z\sim N(\mu,\sigma^2)$ if and only if 
\begin{equation} \label{stein lemma}\mathbb{E}[\sigma^2f'(Z)-(Z-\mu)f(Z)]=0
\end{equation}
for all sufficiently smooth $f$.  This gives rise to the following inhomogeneous differential equation, known as the Stein equation:
\begin{equation} \label{normal equation} \sigma^2f'(x)-(x-\mu)f(x)=h(x)-\mathbb{E}h(Z),
\end{equation}
where $Z \sim N(\mu,\sigma^2)$, and the test function $h$ is a real-valued function.  For any bounded test function, a solution $f$ to (\ref{normal equation}) exists (see Lemma 2.4 of Chen et al$.$ \cite{chen}).  There are a number of techniques for obtaining Stein equations, such as the density approach of Stein et al$.$ \cite{stein3}, the scope of which  has recently been extended by Ley and Swan \cite{ley}.  Another commonly used technique is a generator approach, introduced by Barbour \cite{barbour2}.  This approach involves recognising the target the distribution as the stationary distribution of Markov process and then using the theory of generators of stochastic process to arrive at a Stein equation; for a detailed overview of this method see Reinert \cite{reinert 0}.  Luk \cite{luk} used this approach to obtain the following Stein equation for the $\Gamma(r,\lambda)$ distribution:
\begin{equation} \label{delight} xf''(x)+(r-\lambda x)f'(x)=h(x)-\mathbb{E}h(X), 
\end{equation}
where $X\sim \Gamma(r,\lambda)$.  

The next essential ingredient of Stein's method is smoothness estimates for the solution of the Stein equation.  This can often be done by solving the Stein equation using standard solution methods for differential equations and then using direct calculations to bound the required derivatives of the solution (Stein \cite{stein2} used the approach to bound the first two derivatives of the solution to the normal Stein equation (\ref{normal equation})).  The generator approach is also often used to obtain smoothness estimates.  The use of probabilistic arguments to bound the derivatives of the solution often make it easier to arrive at smoothness estimates than through the use of analytical techniques.  Luk \cite{luk} and Pickett \cite{pickett} used the generator approach to bound $k$-th order derivatives of the solution of the $\Gamma(r,\lambda)$ Stein equation (\ref{delight}). Pickett's bounds are as follows
\begin{equation}\label{pickbound1}\qquad \|f^{(k)}\|\leq\bigg\{\sqrt{\frac{2\pi}{r}}+\frac{2}{r}\bigg\}\|h^{(k-1)}\|, \qquad k\geq 1,
\end{equation}
where $\|f\|=\|f\|_{\infty}=\sup_{x\in\mathbb{R}}|f(x)|$ and $h^{(0)}\equiv h$.

In this paper we obtain the key ingredients required to extend Stein's method to the class of Variance-Gamma distributions.  The Variance-Gamma distributions are defined as follows (this parametrisation is similar to that given in Finlay and Seneta \cite{finlay}).
\begin{definition}[$\mathbf{Variance\mbox{\boldmath{-}}Gamma}$ $\mathbf{distribution,}$ $\mathbf{first}$ $\mathbf{parametrisation}$]
The random variable $X$ is said to have a \emph{Variance-Gamma} distribution with parameters $r > 0$, $\theta \in \mathbb{R}$, $\sigma >0$, $\mu \in \mathbb{R}$ if and only if it has probability density function given by
\begin{equation} \label{vgdef} p_{\mathrm{VG}_1}(x;r,\theta,\sigma,\mu) = \frac{1}{\sigma\sqrt{\pi} \Gamma(\frac{r}{2})} e^{\frac{\theta}{\sigma^2} (x-\mu)} \bigg(\frac{|x-\mu|}{2\sqrt{\theta^2 +  \sigma^2}}\bigg)^{\frac{r-1}{2}} K_{\frac{r-1}{2}}\bigg(\frac{\sqrt{\theta^2 + \sigma^2}}{\sigma^2} |x-\mu| \bigg), 
\end{equation}
where $x \in \mathbb{R}$, and $K_{\nu}(x)$ is a modified Bessel function of the second kind; see Appendix B for a definition.  If (\ref{vgdef}) holds then we write $X\sim \mathrm{VG}_1(r,\theta,\sigma,\mu)$.
\end{definition}

The density (\ref{vgdef}) may at first appear to be undefined in the limit $\sigma \rightarrow 0$, but this limit does in fact exist and this can easily be verified from the asymptotic properties of the modified Bessel function $K_{\nu}(x)$ (see formula (\ref{Ktendinfinity}) from Appendix B).  As we shall see in Proposition \ref{norgamlap} (below), taking the limit $\sigma\rightarrow 0$ and putting $\mu=0$ gives the family of Gamma distributions.  It is also worth noting that the support of the Variance-Gamma distributions is $\mathbb{R}$ when $\sigma>0$, but in the limit $\sigma\rightarrow 0$ the support is the region $(\mu,\infty)$ if $\theta>0$, and is $(-\infty,\mu)$ if $\theta<0$.

The Variance-Gamma distributions were introduced to the financial literature by Madan and Seneta \cite{madan}.  For certain parameter values the Variance-Gamma distributions have semi heavy tails that decay slower than the tails of the normal distribution, and therefore are often appropriate for financial modelling.  

The class of Variance-Gamma distributions includes the Laplace distribution as a special case and in the appropriate limits reduces to the normal and Gamma distributions.  This family of distributions also contains many other distributions that are of interest, which we list in the following proposition (the proof is given in Appendix A).  As far as the author is aware, this is the first list of characterisations of the Variance-Gamma distributions to appear in the literature.

\begin{proposition}\label{norgamlap}(i) Let $\sigma>0$ and $\mu\in\mathbb{R}$ and suppose that $Z_r$ has the $\mathrm{VG}_1(r,0,\sigma/\sqrt{r},\mu)$ distribution.  Then $Z_r$ converges in distribution to a $N(\mu,\sigma^2)$ random variable in the limit $r\rightarrow\infty$.

(ii) Let $\sigma>0$ and $\mu\in\mathbb{R}$, then a $\mathrm{VG}_1(2,0,\sigma,\mu)$ random variable has the $\mathrm{Laplace}(\mu,\sigma)$ distribution with probability density function 
\begin{equation}\label{laplacepdf}p_{\mathrm{VG_1}}(x;2,0,\sigma,\mu) =\frac{1}{2\sigma} \exp\bigg(-\frac{|x-\mu|}{\sigma}\bigg), \quad x \in \mathbb{R}.
\end{equation}

(iii) Suppose that $(X,Y)$ has the bivariate normal distribution with correlation $\rho$ and marginals $X\sim N(0,\sigma_X^2)$ and $Y\sim N(0,\sigma_Y^2)$. Then the product $XY$ follows the $\mathrm{VG}_1(1,\rho\sigma_X\sigma_Y, \sigma_X\sigma_Y\sqrt{1-\rho^2},0)$ distribution.

(iv) Let $X_1,\ldots,X_r$ and $Y_1,\ldots,Y_r$ be independent standard normal random variables.  Then $\mu+\sigma\sum_{k=1}^rX_kY_k$ has the $\mathrm{VG}_1(r,0,\sigma,\mu)$ distribution.  As a special case we have that a Laplace random variable with density (\ref{laplacepdf}) has the representation $\mu+\sigma(X_1Y_1+X_2Y_2)$.

(v) The Gamma distribution is a limiting case of the Variance-Gamma distributions: for $r>0$ and $\lambda>0$, the random variable $X_{\sigma}\sim \mathrm{VG}_1(2r,(2\lambda)^{-1},\sigma,0)$ convergences in distribution to a $\Gamma(r,\lambda)$ random variable in the limit $\sigma\downarrow 0$.

(vi) Suppose that $(X,Y)$ follows a bivariate gamma distribution with correlation $\rho$ and marginals $X\sim \Gamma(r,\lambda_1)$ and $Y\sim \Gamma(r,\lambda_2)$.  Then the random variable $X-Y$ has the $\mathrm{VG}_1(2r,(2\lambda_1)^{-1}-(2\lambda_2)^{-1},(\lambda_1\lambda_2)^{-1/2}(1-\rho)^{1/2},0)$ distribution.

\end{proposition}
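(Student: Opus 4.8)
The plan is to derive all six parts from one structural fact: the density $(\ref{vgdef})$ is a normal variance-mean mixture and hence has an explicit characteristic function. Concretely, I would first show that if $W$ has the $\Gamma(r/2,1/2)$ distribution (i.e.\ chi-square with $r$ degrees of freedom) and $Z\sim N(0,1)$ is independent of $W$, then $\mu+\theta W+\sigma\sqrt{W}\,Z\sim\mathrm{VG}_1(r,\theta,\sigma,\mu)$. This is proved by writing out the mixture density $\int_0^\infty(2\pi\sigma^2 w)^{-1/2}\exp(-(x-\mu-\theta w)^2/(2\sigma^2 w))\frac{(1/2)^{r/2}}{\Gamma(r/2)}w^{r/2-1}e^{-w/2}\,dw$, expanding the square, pulling the factor $e^{\theta(x-\mu)/\sigma^2}$ outside the integral, and evaluating what remains with the standard Bessel integral $\int_0^\infty w^{\alpha-1}e^{-A/w-Bw}\,dw=2(A/B)^{\alpha/2}K_\alpha(2\sqrt{AB})$; the resulting argument and index of $K$ match $(\ref{vgdef})$ exactly (up to absorbing powers of $2$ into the normalising constant). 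Conditioning on $W$ and using $\mathbb{E}[e^{uW}]=(1-2u)^{-r/2}$ then gives, for the characteristic function $\varphi$ of $\mathrm{VG}_1(r,\theta,\sigma,\mu)$,
\begin{equation}\label{vgcf}
\varphi(t)=e^{i\mu t}\,\mathbb{E}\big[\exp(i\theta Wt-\tfrac12\sigma^2 Wt^2)\big]=e^{i\mu t}\big(1-2i\theta t+\sigma^2 t^2\big)^{-r/2}.
\end{equation}

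Parts (ii)--(iv) follow at once. For (ii), put $r=2$ and $\theta=0$ in $(\ref{vgdef})$ and use $\Gamma(1)=1$ and $K_{1/2}(z)=\sqrt{\pi/(2z)}\,e^{-z}$; the power of $|x-\mu|$ and the Bessel factor cancel, leaving $(\ref{laplacepdf})$. For (iii), condition on $X$: since $Y\mid X\sim N(\rho\tfrac{\sigma_Y}{\sigma_X}X,\sigma_Y^2(1-\rho^2))$, one gets $\mathbb{E}[e^{itXY}\mid X]=\exp\big(-\big(\tfrac12 t^2\sigma_Y^2(1-\rho^2)-it\rho\sigma_Y/\sigma_X\big)X^2\big)$, and applying $\mathbb{E}[e^{-sX^2}]=(1+2s\sigma_X^2)^{-1/2}$ (valid for $X\sim N(0,\sigma_X^2)$ with $\mathrm{Re}(s)\geq 0$) produces $\mathbb{E}[e^{itXY}]=\big(1-2i\rho\sigma_X\sigma_Y t+\sigma_X^2\sigma_Y^2(1-\rho^2)t^2\big)^{-1/2}$, which is $(\ref{vgcf})$ with $r=1$, $\theta=\rho\sigma_X\sigma_Y$, $\sigma=\sigma_X\sigma_Y\sqrt{1-\rho^2}$, $\mu=0$. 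For (iv), (iii) with $\rho=0$ and $\sigma_X=\sigma_Y=1$ shows each $X_kY_k\sim\mathrm{VG}_1(1,0,1,0)$ with characteristic function $(1+t^2)^{-1/2}$, so by independence $\mu+\sigma\sum_{k=1}^rX_kY_k$ has characteristic function $e^{i\mu t}(1+\sigma^2 t^2)^{-r/2}$, which by $(\ref{vgcf})$ is $\mathrm{VG}_1(r,0,\sigma,\mu)$; the Laplace special case is the case $r=2$ together with (ii).

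The two limiting assertions follow from the mixture representation. For (i), write $Z_r=\mu+(\sigma/\sqrt r)\sqrt{W_r}\,Z$ with $W_r\sim\chi^2_r$; since $W_r/r\to1$ in probability (mean $r$, variance $2r$), $(\sigma/\sqrt r)\sqrt{W_r}\,Z=\sigma\sqrt{W_r/r}\,Z\to\sigma Z$, so $Z_r$ converges in distribution to $N(\mu,\sigma^2)$; alternatively, by (iv) the $X_kY_k$ are i.i.d.\ with mean $0$ and variance $1$, so the classical central limit theorem applies directly to $(\sigma/\sqrt r)\sum_{k=1}^rX_kY_k$. For (v), $X_\sigma=(2\lambda)^{-1}W+\sigma\sqrt{W}\,Z$ with $W\sim\Gamma(r,1/2)$, and as $\sigma\downarrow0$ this converges to $(2\lambda)^{-1}W\sim\Gamma(r,\lambda)$; equally, one can let $\sigma\downarrow0$ in $(\ref{vgdef})$ using the large-argument asymptotic $(\ref{Ktendinfinity})$ of $K_\nu$ (the argument of the Bessel function tends to $+\infty$), which collapses $(\ref{vgdef})$ to $\frac{\lambda^r}{\Gamma(r)}x^{r-1}e^{-\lambda x}$ on $x>0$ and to $0$ on $x<0$, and then apply Scheff\'e's lemma.

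Finally, part (vi) is handled exactly like (iii)--(iv) once the bivariate gamma law is specified: using the joint Laplace transform of the (Kibble) bivariate gamma, $\mathbb{E}[e^{-sX-uY}]=\big((1+s/\lambda_1)(1+u/\lambda_2)-\rho\,su/(\lambda_1\lambda_2)\big)^{-r}$, substituting $s=-it$ and $u=it$ gives $\mathbb{E}[e^{it(X-Y)}]=\big(1-it(\lambda_1^{-1}-\lambda_2^{-1})+(1-\rho)t^2/(\lambda_1\lambda_2)\big)^{-r}$, which is $(\ref{vgcf})$ with index parameter $2r$, $\theta=(2\lambda_1)^{-1}-(2\lambda_2)^{-1}$, $\sigma=(\lambda_1\lambda_2)^{-1/2}(1-\rho)^{1/2}$, $\mu=0$; uniqueness of characteristic functions finishes the proof. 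I expect the main work to be twofold: (a) carrying out cleanly the Bessel-integral computation underlying the mixture representation and $(\ref{vgcf})$, on which all six parts rest; and (b) in (vi), correctly pinning down the intended bivariate gamma distribution, its joint transform, and the meaning of its correlation parameter. Throughout, matching the several competing parametrisations (in particular the index $r$ versus $r/2$, and the factors of $2$ in the scale) will require careful bookkeeping, but no step beyond these is expected to present a genuine difficulty.
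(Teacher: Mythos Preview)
Your proposal is correct and takes a genuinely different route from the paper's proof.

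The paper does not derive the normal variance-mean mixture representation; it quotes it (as Proposition~\ref{giro}, attributed to Barndorff-Nielsen et al.), and then builds on it a further structural representation (Corollary~\ref{vuelta}): for integer $r$, $Z\stackrel{\mathcal D}{=}\mu+\theta\sum_{i=1}^r X_i^2+\sigma\sum_{i=1}^r X_iY_i$ with $X_i,Y_i$ i.i.d.\ $N(0,1)$. Parts (i), (iii), (iv) are then read off from this corollary: for (iii) the bivariate normal is decorrelated via $W=(\tilde Y-\rho\tilde X)/\sqrt{1-\rho^2}$ so that $XY=\rho\sigma_X\sigma_Y\tilde X^2+\sigma_X\sigma_Y\sqrt{1-\rho^2}\,\tilde X W$; for (i) the CLT is applied to $\sum X_iY_i$ (so the paper's argument, like your second option for (i), is tied to integer $r$). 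Part (v) is the $\sigma\downarrow 0$ limit in Proposition~\ref{giro}, exactly as you do. For (vi) the paper does not compute a transform; it quotes a density formula for $U-V$ from Holm and Alouini and matches it to the $\mathrm{VG}_1$ density by inspection. Part (ii) is the same $K_{1/2}$ substitution in both approaches.

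What each route buys: your characteristic-function approach is more self-contained (you actually prove the mixture representation and $(\ref{vgcf})$), gives a single unified tool that dispatches (iii), (iv), (vi) by the same mechanism, and your mixture-plus-Slutsky argument for (i) works for real $r\to\infty$ rather than only along integers. The paper's approach is shorter because it outsources the two substantial computations (the mixture representation and the bivariate-gamma difference density) to the literature, and its decorrelation trick for (iii) is an elegant alternative to computing $\mathbb{E}[e^{itXY}]$. Your caveat in (b) is well placed: the paper never names the bivariate gamma model either, and the Holm--Alouini result it invokes is precisely the Kibble/McKay model whose Laplace transform you use, so the two arguments for (vi) are consistent.
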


The representations of the Variance-Gamma distributions given in Proposition \ref{norgamlap} enable us to determine a number of statistics that may have asymptotic Variance-Gamma distributions.  

One of the main results of this paper (see Lemma \ref{ghds}) is the following Stein equation for the Variance-Gamma distributions:  
\begin{equation} \label{nice} \sigma^2 (x-\mu)f''(x) + (\sigma^2 r + 2\theta (x-\mu))f'(x) +(r\theta -  (x-\mu))f(x) = h(x) -\mathrm{VG}^{r,\theta}_{\sigma,\mu}h,
\end{equation}
where $\mathrm{VG}^{r,\theta,}_{\sigma,\mu}h$ denotes the quantity $\mathbb{E}h(X)$ for $X \sim \mathrm{VG}_1(r,\theta,\sigma,\mu)$.  We also  obtain uniform bounds for the first four derivatives of the solution of the Stein equation for the case $\theta=0$.  

In Section 3, we analyse the Stein equation (\ref{nice}).  In particular, we show that the normal Stein equation (\ref{normal equation}) and Gamma Stein equation (\ref{delight}) are special cases.   As a Stein equation for a given distribution is not unique (see Barbour \cite{barbour1}), the fact that in the appropriate limit the Variance-Gamma Stein equation (\ref{nice}) reduces to the known normal and Gamma Stein equation is an attractive feature.  

Stein's method has also recently been extended to the Laplace distribution (see Pike and Ren \cite{pike} and D\"obler \cite{dobler}), although the Laplace Stein equation obtained by \cite{pike} differs from the Laplace Stein equation that arises as a special case of (\ref{nice}); see Section 3.1.1 for a more detailed discussion.  Another special case of the Stein equation (\ref{nice}) is a Stein equation for the product of two independent central normal random variables, which is in agreement with the Stein equation for products of independent central normal that was recently obtained by Gaunt \cite{gaunt pn}.   Therefore, the results from this paper allow the existing literature for Stein's method for normal, Gamma, Laplace and product normal approximation to be considered in a more general framework.

More importantly, our development of Stein's method for the Variance-Gamma distributions allows a number of new situations to be treated by Stein's method.  In Section 4, we illustrate our method by obtaining a bound for the distance between the statistic 
\begin{equation}\label{wreqn}W_r=\sum_{i,j,k=1}^{m,n,r}X_{ik}Y_{jk}=\sum_{k=1}^r\bigg(\frac{1}{\sqrt{m}}\sum_{i=1}^mX_{ik}\bigg)\bigg(\frac{1}{\sqrt{n}}\sum_{j=1}^nY_{jk}\bigg),
\end{equation}
where the $X_{ik}$ and $Y_{jk}$ are independent and identically distributed with zero mean, and its asymptotic distribution, which, by the central limit theorem and part (iv) of Proposition \ref{norgamlap}, is the $\mathrm{VG}_1(r,0,1,0)$ distribution.  By using the $\mathrm{VG}_1(r,0,1,0)$ Stein equation
\begin{equation}\label{nicerr}xf''(x)+rf'(x)-xf(x)=h(x)-\mathrm{VG}^{r,0}_{1,0}h,
\end{equation} 
local approach couplings, and symmetry arguments, that were introduced by Pickett \cite{pickett}, we obtain a $O(m^{-1}+n^{-1})$ bound for smooth test functions.  A similar phenomena was observed in chi-square approximation by Pickett, and also by Goldstein and Reinert \cite{goldstein} in which they obtained $O(n^{-1})$ convergence rates in normal approximation, for smooth test functions, under the assumption of vanishing third moments.  For non-smooth test functions we would, however, expect a $O(m^{-1/2}+n^{-1/2})$ convergence rate (cf. Berry-Ess\'{e}en Theorem (Berry \cite{berry} and Ess\'{e}en \cite{esseen}) to hold; see Remark \ref{undone}.  

The rest of this paper is organised as follows.  In Section 2, we introduce the Variance-Gamma distributions and state some of their standard properties.    In Section 3, we obtain a characterising lemma for the Variance-Gamma distributions and a corresponding Stein equation.  We also obtain the unique bounded solution of the Stein equation, and present uniform bounds for the first four derivatives of the solution for the case $\theta =0$.  In Section 4, we use Stein's method for Variance-Gamma approximation to bound the distance between the statistic (\ref{wreqn}) and its limiting Variance-Gamma distribution.  We then apply this bound to an application of binary sequence comparison, which is a simple special case of the more general problem of word sequence comparison.  In Appendix A, we include the proofs of some technical lemmas that are required in this paper.  Appendix B provides a list of some elementary properties of modified Bessel functions that we make use of in this paper. 

\section{The class of Variance-Gamma distributions}

In this section we present the Variance-Gamma distributions and some of their standard properties.  Throughout this paper we will make use of two different parametrisations of the Variance-Gamma distributions; the first parametrisation was given in Section 1, and making the change of variables
\begin{equation} \label{parameter} \nu=\frac{r-1}{2}, \qquad \alpha =\frac{\sqrt{\theta^2 +  \sigma^2}}{\sigma^2}, \qquad \beta =\frac{\theta}{\sigma^2}.
\end{equation}
leads to another useful parametrisation.  This parametrisation can be found in Eberlein and Hammerstein \cite{eberlein}.
\begin{definition}[$\mathbf{Variance\mbox{\boldmath{-}}Gamma}$ $\mathbf{distribution}$, $\mathbf{second}$ $\mathbf{parametrisation}$]
The random variable $X$ is said to have a \emph{Variance-Gamma} distribution with parameters $\nu,\alpha,\beta,\mu$, where $\nu > -1/2$, $\mu \in \mathbb{R}$, $\alpha >|\beta|$, if and only if its probability density function is given by
\begin{equation} \label{seven} p_{\mathrm{VG}_2}(x;\nu,\alpha,\beta,\mu) = \frac{(\alpha^2 - \beta^2)^{\nu + 1/2}}{\sqrt{\pi} \Gamma(\nu + \frac{1}{2})}\left(\frac{|x-\mu|}{2\alpha}\right)^{\nu} e^{\beta (x-\mu)}K_{\nu}(\alpha|x-\mu|), \quad x\in \mathbb{R}.
\end{equation}
If (\ref{seven}) holds then we write $X\sim \mathrm{VG}_2(\nu,\alpha,\beta,\mu)$.
\end{definition}

\begin{definition}If $X\sim \mathrm{VG}_1(r,0,\sigma,\mu)$, for $r$, $\sigma$, and $\mu$ defined as in Definition \ref{vgdef} (or equivalently $X\sim \mathrm{VG}_2(\nu,\alpha,0,\mu)$), then $X$ is said to have a \emph{Symmetric Variance-Gamma} distribution.
\end{definition}

The first parametrisation leads to simple characterisations of the Variance-Gamma distributions in terms of normal and Gamma distributions, and therefore in many cases it allows us to recognise statistics that will have an asymptotic Variance-Gamma distribution.  For this reason, we state our main results in terms of this parametrisation.  However, the second parametrisation proves to be very useful in simplifying the calculations of Section 3, as the solution of the Variance-Gamma Stein equation has a simpler representation for this parametrisation.  We can then state the results in terms of the first parametrisation by using (\ref{parameter}).

The Variance-Gamma distributions have moments of arbitrary order (see Eberlein and Hammerstein \cite{eberlein}), in particular the mean and variance (for both parametrisations) of a random variable $X$ with a Variance-Gamma distribution are given by
\begin{eqnarray} \label{vgmoment}\mathbb{E}X&=&\mu+\frac{ (2\nu+1)\beta}{\alpha^2-\beta^2}=\mu+ r\theta, \\
\mathrm{Var}X&=&\frac{2\nu+1}{\alpha^2-\beta^2}\left(1+\frac{2\beta^2}{\alpha^2-\beta^2}\right)=r( \sigma^2+2\theta^2).\nonumber
\end{eqnarray}

The following proposition, which can be found in Bibby and S{\o}rensen \cite{bibby}, shows that the class of Variance-Gamma distributions is closed under convolution, provided that the random variables have common values of $\theta$ and $\sigma$ (or, equivalently, common values of $\alpha$ and $\beta$  in the second parametrisation).  
\begin{proposition} \label{tour}Let $X_1$ and $X_2$ be independent random variables such that $X_i \sim \mathrm{VG}_1(r_i ,\theta,\sigma,\mu_i)$, $i=1,2$, then we have that
\[X_1 +X_2 \sim \mathrm{VG}_1(r_1+r_2 ,\theta,\sigma,\mu_1+\mu_2).\]
\end{proposition}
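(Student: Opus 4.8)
The plan is to prove this via characteristic functions, exploiting the fact that in the Variance-Gamma characteristic function the parameter $r$ enters only through an exponent, so that convolving independent copies with matched $\theta$ and $\sigma$ simply adds the values of $r$ (and, trivially, the locations $\mu$). The first and only substantive step is to establish that if $X\sim\mathrm{VG}_1(r,\theta,\sigma,\mu)$ then
\begin{equation*}
\mathbb{E}\big[e^{itX}\big] = e^{i\mu t}\big(1-2i\theta t+\sigma^2 t^2\big)^{-r/2},\qquad t\in\mathbb{R},
\end{equation*}
where the power is the principal branch (the base has real part $1+\sigma^2 t^2\ge 1$, so there is no branch ambiguity); equivalently, in the second parametrisation this is $e^{i\mu t}\big((\alpha^2-\beta^2)/(\alpha^2-(\beta+it)^2)\big)^{\nu+1/2}$, using $\alpha^2-\beta^2=\sigma^{-2}$ and $\theta=\beta/(\alpha^2-\beta^2)$ from (\ref{parameter}). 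This formula may simply be quoted from Eberlein and Hammerstein \cite{eberlein}; for a self-contained derivation I would use the normal mean--variance mixture representation $X\stackrel{d}{=}\mu+\theta T+\sigma\sqrt{T}\,N$, with $T\sim\Gamma(r/2,1/2)$ (shape--rate) and $N\sim N(0,1)$ independent, whose density is checked to equal (\ref{vgdef}) via the integral representation of $K_\nu$ (Appendix B) and whose mean and variance match (\ref{vgmoment}); conditioning on $T$ and integrating out the Gaussian gives $\mathbb{E}[e^{itX}]=e^{i\mu t}\,\mathbb{E}[e^{T(i\theta t-\sigma^2 t^2/2)}]$, and inserting the Laplace transform $(1-2s)^{-r/2}$ of the $\Gamma(r/2,1/2)$ law (valid since $\mathrm{Re}(i\theta t-\sigma^2 t^2/2)\le 0<1/2$) produces the displayed characteristic function.

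With the formula in hand the conclusion is immediate: if $X_1$ and $X_2$ are independent with $X_i\sim\mathrm{VG}_1(r_i,\theta,\sigma,\mu_i)$, then by independence and the common values of $\theta$ and $\sigma$,
\begin{equation*}
\mathbb{E}\big[e^{it(X_1+X_2)}\big]=e^{i(\mu_1+\mu_2)t}\big(1-2i\theta t+\sigma^2 t^2\big)^{-(r_1+r_2)/2},
\end{equation*}
which is precisely the characteristic function of $\mathrm{VG}_1(r_1+r_2,\theta,\sigma,\mu_1+\mu_2)$, so uniqueness of characteristic functions finishes the proof.

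There is no genuinely hard step here: the hypotheses are used only to make the two quadratic factors $1-2i\theta t+\sigma^2 t^2$ coincide so that their powers combine — exactly the structural constraint in the statement, and equally visible in the $(\alpha,\beta)$ form since the base $\alpha^2-\beta^2$ and the factor $\alpha^2-(\beta+it)^2$ must agree — and the only point needing any care is the (routine) characteristic-function formula itself. An alternative and equally short route avoids characteristic functions altogether: work directly with the mixture representation, writing $X_1+X_2\stackrel{d}{=}(\mu_1+\mu_2)+\theta(T_1+T_2)+\sigma(\sqrt{T_1}N_1+\sqrt{T_2}N_2)$ and noting that $\sqrt{T_1}N_1+\sqrt{T_2}N_2$ is conditionally $N(0,T_1+T_2)$ given $(T_1,T_2)$ while $T_1+T_2\sim\Gamma((r_1+r_2)/2,1/2)$ by additivity of gamma laws with a common rate, so the right-hand side is of the mixture form defining a $\mathrm{VG}_1(r_1+r_2,\theta,\sigma,\mu_1+\mu_2)$ variable.
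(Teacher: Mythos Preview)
Your proof is correct. The paper does not actually prove this proposition; it simply cites it from Bibby and S{\o}rensen \cite{bibby}. Both of your routes --- via the characteristic function and via the normal mean--variance mixture representation --- are standard and valid, and either would serve as a self-contained proof. The mixture argument is perhaps slightly cleaner here since it leans directly on Proposition~\ref{giro} (also quoted without proof in the paper) together with the additivity of gamma variables with common rate, avoiding any need to justify the characteristic-function formula separately.
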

Variance-Gamma random variables can be characterised in terms of independent normal and Gamma random variables.  This characterisation is given in the following proposition, which can be found in Barndorff-Nielsen et al$.$ \cite{barn}. 

\begin{proposition} \label{giro} Let $r > 0$, $\theta \in \mathbb{R}$, $\sigma > 0$ and  $\mu \in \mathbb{R}$.  Suppose that $U\sim N(0,1)$ and $V\sim \Gamma(r/2,1/2)$ are independent random variables and let $Z \sim \mathrm{VG}_1(r,\theta,\sigma,\mu)$, then
\begin{equation*}\label{vgcharvg}Z \stackrel{\mathcal{D}}{=} \mu + \theta V +\sigma \sqrt{V} U.
\end{equation*}
\end{proposition}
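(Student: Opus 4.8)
The plan is to verify the distributional identity directly at the level of probability densities, using that, conditionally on $V$, the random variable on the right-hand side is Gaussian. Write $W=\mu+\theta V+\sigma\sqrt{V}U$. Since $U$ and $V$ are independent, the conditional law of $W$ given $V=v$ is $N(\mu+\theta v,\sigma^2 v)$, and hence the density of $W$ at a point $x\neq\mu$ is obtained by mixing over the $\Gamma(r/2,1/2)$ law of $V$:
\[p_W(x)=\int_0^\infty\frac{1}{\sqrt{2\pi\sigma^2 v}}\exp\!\bigg(-\frac{(x-\mu-\theta v)^2}{2\sigma^2 v}\bigg)\frac{(1/2)^{r/2}}{\Gamma(r/2)}\,v^{r/2-1}e^{-v/2}\,dv.\]
It suffices to check that $p_W$ agrees with $p_{\mathrm{VG}_1}(\cdot\,;r,\theta,\sigma,\mu)$ for $x\neq\mu$, so the single point $x=\mu$ may be disregarded.

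First I would expand the Gaussian exponent as $(x-\mu-\theta v)^2=(x-\mu)^2-2\theta(x-\mu)v+\theta^2 v^2$, which lets the integrand be rewritten as the product of the $v$-independent factor $e^{\theta(x-\mu)/\sigma^2}$ (matching the exponential term in (\ref{vgdef})) and an integral of the form $\int_0^\infty v^{(r-3)/2}\exp(-a/v-bv)\,dv$, where $a=(x-\mu)^2/(2\sigma^2)>0$ and $b=(\theta^2+\sigma^2)/(2\sigma^2)>0$; the positivity of $a$ and $b$ guarantees convergence at both endpoints for every $r>0$. The key step is then to invoke the standard integral representation of the modified Bessel function of the second kind, $\int_0^\infty t^{\nu-1}\exp(-a/t-bt)\,dt=2(a/b)^{\nu/2}K_\nu(2\sqrt{ab})$ (see Appendix B), with $\nu=(r-1)/2$. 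A short computation gives $2\sqrt{ab}=\sqrt{\theta^2+\sigma^2}\,|x-\mu|/\sigma^2$ and $(a/b)^{\nu/2}=(|x-\mu|/\sqrt{\theta^2+\sigma^2})^{(r-1)/2}$, which reproduce exactly the argument of $K_{(r-1)/2}$ and the power of $|x-\mu|$ appearing in (\ref{vgdef}), up to a power of $2$ to be absorbed into the normalising constant.

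The remaining step is bookkeeping of constants: collecting $(2\pi\sigma^2)^{-1/2}$, $(1/2)^{r/2}/\Gamma(r/2)$, the factor $2$ from the Bessel integral, and the factor $2^{(r-1)/2}$ produced by rewriting $(|x-\mu|/\sqrt{\theta^2+\sigma^2})^{(r-1)/2}$ as $2^{(r-1)/2}(|x-\mu|/(2\sqrt{\theta^2+\sigma^2}))^{(r-1)/2}$, one finds that the overall constant collapses to $1/(\sigma\sqrt{\pi}\,\Gamma(r/2))$, so that $p_W$ is precisely the density (\ref{vgdef}) and therefore $W\stackrel{\mathcal{D}}{=}Z$. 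I expect the only genuine obstacle to be carrying out this constant-and-powers-of-$2$ accounting without error; everything else is routine. An alternative is to compare moment generating functions, using $\mathbb{E}[e^{tV}]=(1-2t)^{-r/2}$ for $t<1/2$ to obtain $\mathbb{E}[e^{tW}]=e^{\mu t}(1-2\theta t-\sigma^2 t^2)^{-r/2}$; but identifying the latter as the Variance-Gamma moment generating function still requires essentially the same Bessel computation, so the direct density argument is the most self-contained.
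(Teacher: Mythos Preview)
Your proof is correct: the conditional-Gaussian mixing argument together with the integral representation $\int_0^\infty t^{\nu-1}e^{-a/t-bt}\,dt=2(a/b)^{\nu/2}K_\nu(2\sqrt{ab})$ does reproduce the density (\ref{vgdef}) exactly, and your bookkeeping of constants checks out. One small point: you refer to Appendix B for this integral representation, but the paper's Appendix B does not in fact list it (it contains series definitions, asymptotics, differentiation formulas, the Wronskian identity, and some inequalities, but not this integral); you would need to cite it from elsewhere, e.g.\ Olver et al.\ \cite{olver}, \S10.32.

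As for comparison with the paper: the paper does not prove Proposition \ref{giro} at all. It is stated as a known result and attributed to Barndorff-Nielsen, Kent and S{\o}rensen \cite{barn}, with no argument given. So your density computation is not an alternative to the paper's proof but rather a self-contained verification that the paper simply omits. Your remark at the end that the moment-generating-function route ``still requires essentially the same Bessel computation'' is slightly overstated: one can recognise $(1-2\theta t-\sigma^2 t^2)^{-r/2}$ as the Variance-Gamma MGF directly from the formula displayed in Section~3.1.2, which the paper derives from the Stein characterisation without any Bessel integrals; but since that formula appears later in the paper and itself relies on Lemma \ref{ghds}, your density approach is indeed the cleanest self-contained route here.
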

Using Proposition \ref{giro} we can establish the following useful representation of the Variance-Gamma distributions, which appears to be a new result.  Indeed, the representation allows us to see that the statistic (\ref{wreqn}) has an asymptotic Variance-Gamma distribution. 
\begin{corollary} \label{vuelta}
Let $\theta \in \mathbb{R}$, $\sigma > 0$, $\mu \in \mathbb{R}$, and $r$ be a positive integer.  Let $X_1,X_2,\ldots,X_r$ and $Y_1,Y_2,\ldots,Y_r$ be independent standard normal random variables and let $Z$ be a $\mathrm{VG}_1(r,\theta,\sigma,\mu)$ random variable, then

\begin{equation*}Z \stackrel{\mathcal{D}}{=} \mu + \theta \sum_{i=1}^r X_i^2 + \sigma \sum_{i=1}^r X_i Y_i. \end{equation*}
\end{corollary}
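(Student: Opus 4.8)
The plan is to reduce the claim to Proposition \ref{giro} by building, out of the independent standard normals $X_1,\dots,X_r,Y_1,\dots,Y_r$, an explicit copy of the pair $(V,\sqrt{V}\,U)$ that appears there. Set $V := \sum_{i=1}^r X_i^2$. Since the $X_i$ are independent standard normals, $V$ has the chi-square distribution with $r$ degrees of freedom, which is precisely the $\Gamma(r/2,1/2)$ distribution required in Proposition \ref{giro}.

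Next I would determine the conditional law of $S := \sum_{i=1}^r X_i Y_i$ given the vector $(X_1,\dots,X_r)$. Conditionally on $(X_1,\dots,X_r)$, $S$ is a linear combination of the independent $N(0,1)$ variables $Y_1,\dots,Y_r$ with fixed coefficients $X_i$, so conditionally $S \sim N\bigl(0,\sum_{i=1}^r X_i^2\bigr) = N(0,V)$. The crucial point is that this conditional distribution depends on $(X_1,\dots,X_r)$ only through $V$. Hence the joint law satisfies $(V,S) \stackrel{\mathcal{D}}{=} (V,\sqrt{V}\,U)$, where $U\sim N(0,1)$ is independent of $V$. (The event $\{V=0\}$ has probability zero, and in any case $S=\sqrt{V}\,U=0$ there, so it causes no difficulty.)

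Combining the two observations, $\mu + \theta\sum_{i=1}^r X_i^2 + \sigma\sum_{i=1}^r X_i Y_i = \mu + \theta V + \sigma S \stackrel{\mathcal{D}}{=} \mu + \theta V + \sigma\sqrt{V}\,U$, and by Proposition \ref{giro} the right-hand side has the $\mathrm{VG}_1(r,\theta,\sigma,\mu)$ distribution, which is the assertion of the corollary.

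I do not anticipate a serious obstacle; the only step requiring a little care is the passage from the conditional statement ``$S$ given $(X_i)$ is $N(0,V)$'' to the unconditional equality in distribution of the \emph{pair} $(V,S)$, which relies on the conditional law of $S$ being a measurable function of $V$ alone. An alternative route would be to compute the joint characteristic function of $\bigl(\sum_i X_i^2,\sum_i X_i Y_i\bigr)$ and match it against that of $(V,\sqrt{V}\,U)$, but the conditioning argument is cleaner and sidesteps any Bessel-function computations.
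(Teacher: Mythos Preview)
Your argument is correct, but it takes a genuinely different route from the paper's. The paper works term by term: it first observes that $X_i Y_i \stackrel{\mathcal{D}}{=} |X_i| Y_i$, so each summand $\theta X_i^2 + \sigma X_i Y_i$ is of the form $\theta V_i + \sigma\sqrt{V_i}\,U_i$ with $V_i = X_i^2 \sim \Gamma(1/2,1/2)$ and $U_i = Y_i$ independent; Proposition~\ref{giro} then gives that each summand is $\mathrm{VG}_1(1,\theta,\sigma,\cdot)$, and the convolution property (Proposition~\ref{tour}) assembles these into $\mathrm{VG}_1(r,\theta,\sigma,\mu)$. Your approach bypasses Proposition~\ref{tour} entirely by going straight to the aggregate $V=\sum_i X_i^2 \sim \Gamma(r/2,1/2)$ and identifying the conditional law of $\sum_i X_i Y_i$ given the $X_i$'s as $N(0,V)$, so that Proposition~\ref{giro} applies in one shot. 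Your route is arguably cleaner and uses one fewer ingredient; the paper's route is slightly more elementary at each step (the identity $X_i Y_i \stackrel{\mathcal{D}}{=} |X_i| Y_i$ is immediate from symmetry of $Y_i$, with no need for the conditional-law-depends-only-on-$V$ observation) and makes explicit the additive structure in $r$.
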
  

\begin{proof}Let $X_1,X_2,...,X_r$ and $Y_1,Y_2,...,Y_r$ be sequences of independent standard normal random variables.  Then $X_i^2$, $i=1,2,...,m$, has a $\chi_{(1)}^2$ distribution, that is a $\Gamma(1/2,1/2)$ distribution.  Define
\[Z_1=\mu+\theta X_1^2 +\sigma X_1Y_1, \qquad
Z_i = \theta X_i^2 +\sigma X_iY_i, \quad i=2,3,\ldots ,r.
\]
Note that $X_iY_i\stackrel{\mathcal{D}}{=}|X_i|Y_i$.  Hence, by Proposition \ref{giro}, we have that $Z_1$ is a $\mathrm{VG}_1(1,\theta,\sigma,\mu)$ random variable and $Z_i$, $i=2,\ldots r$, are $\mathrm{VG}_1(1,\theta,\sigma,0)$ random variables.  It therefore follows from Proposition \ref{tour} that the sum $Z=\sum_{i=1}^rZ_i$ follows the $\mathrm{VG}_1(r,\theta,\sigma,\mu)$ distribution.  
\end{proof}  

\section{Stein's method for Variance-Gamma distributions}

\subsection{A Stein equation for the Variance-Gamma distributions}

The following lemma, which characterises the Variance-Gamma distributions, will lead to a Stein equation for the Variance-Gamma distributions.  Before stating the lemma, we note that an application of the asymptotic formula (\ref{Ktendinfinity}) to the density function (\ref{seven}) allows us to deduce the tail behaviour of the $\mathrm{VG}_2(\nu,\alpha,\beta,\mu)$ distribution: 
\begin{equation}\label{ptail}p_{\mathrm{VG}_2}(x;\nu,\alpha,\beta,\mu)\sim\begin{cases} \displaystyle \frac{1}{\pi\Gamma(\nu+\frac{1}{2})}\bigg(\frac{\alpha^2-\beta^2}{2\alpha}\bigg)^{\nu+\frac{1}{2}}x^{\nu-\frac{1}{2}}\mathrm{e}^{-(\alpha-\beta)(x-\mu)}, & \:  x\rightarrow\infty, \\[10pt]
\displaystyle \frac{1}{\pi\Gamma(\nu+\frac{1}{2})}\bigg(\frac{\alpha^2-\beta^2}{2\alpha}\bigg)^{\nu+\frac{1}{2}}(-x)^{\nu-\frac{1}{2}}\mathrm{e}^{(\alpha+\beta)(x-\mu)}, & \: x\rightarrow-\infty.
\end{cases}
\end{equation}
Note that the tails are in general not symmetric.
  
\begin{lemma} \label{ghds}
Let $W$ be a real-valued random variable.  Then $W$ follows the $\mathrm{VG}_2(\nu,\alpha,\beta,\mu)$ distribution if and only if
\begin{equation} \label{lemmaone}\mathbb{E}[(W-\mu)f''(W) + (2\nu+1 +2\beta (W-\mu))f'(W) + ((2\nu+1)\beta - (\alpha^2 -\beta^2)(W-\mu))f(W)] = 0
\end{equation}
for all piecewise twice continuously differentiable functions $f:\mathbb{R}\rightarrow\mathbb{R}$ that satisfy
\begin{equation}\label{limf1}\lim_{x\rightarrow\infty}f^{(k)}(x)x^{\nu+3/2}\mathrm{e}^{-(\alpha-\beta)x}=0 \qquad \text{and} \qquad  \lim_{x\rightarrow-\infty}f^{(k)}(x)(-x)^{\nu+3/2}\mathrm{e}^{(\alpha+\beta)x}=0
\end{equation}
for $k=0,1,2$, where $f^{(0)}\equiv f$.
\end{lemma}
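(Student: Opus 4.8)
The plan is to prove both implications by the density approach; after translating $W$ by $\mu$ we may assume $\mu=0$, so write $p(x)=p_{\mathrm{VG}_2}(x;\nu,\alpha,\beta,0)=C|x|^{\nu}\mathrm{e}^{\beta x}K_{\nu}(\alpha|x|)$, with $C$ the normalising constant from (\ref{seven}), and let $\mathcal{A}f$ denote the expression in square brackets in (\ref{lemmaone}) with $\mu=0$. The crux of the \emph{only if} direction is the pointwise identity
\begin{equation*}
(\mathcal{A}f)(x)\,p(x)=\frac{\mathrm{d}}{\mathrm{d}x}\Big[x\,p(x)f'(x)+\big((2\nu+2\beta x)p(x)-xp'(x)\big)f(x)\Big],\qquad x\neq 0,
\end{equation*}
which on expanding the right-hand side is seen to be equivalent to the single second-order linear differential equation
\begin{equation*}
x\,p''(x)+(1-2\nu-2\beta x)\,p'(x)+\big(\beta(2\nu-1)-(\alpha^{2}-\beta^{2})x\big)p(x)=0,\qquad x\neq 0.
\end{equation*}
I would verify this by substituting $p=q\kappa$ with $q(x)=C|x|^{\nu}\mathrm{e}^{\beta x}$ (so $q'/q=\nu/x+\beta$) and $\kappa(x)=K_{\nu}(\alpha|x|)$, using the modified Bessel equation $z^{2}K_{\nu}''(z)+zK_{\nu}'(z)-(z^{2}+\nu^{2})K_{\nu}(z)=0$ from Appendix~B to eliminate $\kappa''$, and then checking that the coefficients of $\kappa$ and of $\kappa'$ each vanish identically; this is a direct if somewhat lengthy computation.

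Granting the identity, for admissible $f$ I would write $\mathbb{E}[\mathcal{A}f(W)]=\int_{-\infty}^{0}+\int_{0}^{\infty}$ and integrate by parts on each half-line, further subdividing at the finitely many points where $f''$ has a jump; since $f$, $f'$ and $p$ are continuous there, the intermediate boundary contributions cancel. The contributions at $\pm\infty$ vanish: by the tail estimate (\ref{ptail}) the density decays like $|x|^{\nu-1/2}\mathrm{e}^{-(\alpha-\beta)x}$ as $x\to+\infty$ and like $|x|^{\nu-1/2}\mathrm{e}^{(\alpha+\beta)x}$ as $x\to-\infty$, and differentiating gives $p'=O(p)$, so each boundary expression equals $O(|x|^{-1})$ times one of the quantities appearing in (\ref{limf1}), which tend to zero. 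The contributions at $0^{+}$ and $0^{-}$ need not vanish individually; using the small-argument expansion of $K_{\nu}$ one writes $p(x)=A(x)+|x|^{2\nu}B(x)$ near the origin, with $A,B$ even and smooth and $A(0^{+})=A(0^{-})$ (an extra factor $\log|x|$ multiplying the second term when $\nu$ is a non-negative integer), and substituting this shows that $x\,p(x)f'(x)\to 0$ while the singular parts of $(2\nu+2\beta x)p(x)-xp'(x)$ cancel, leaving the value $2\nu A(0)f(0)$ from either side; since $(-\infty,0)$ contributes this with the sign opposite to that of $(0,\infty)$, the total is $0$. This establishes (\ref{lemmaone}).

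For the converse, suppose (\ref{lemmaone}) holds for all admissible $f$. Given a bounded continuous function $h$, let $f_{h}$ be the bounded solution of the Stein equation $\mathcal{A}f_{h}=h-\mathbb{E}h(Z)$, $Z\sim\mathrm{VG}_{2}(\nu,\alpha,\beta,0)$, constructed in Section~3; its first two derivatives are bounded and $|x|^{\nu+3/2}\mathrm{e}^{-(\alpha-\beta)x}\to 0$ and $|x|^{\nu+3/2}\mathrm{e}^{(\alpha+\beta)x}\to 0$, so $f_{h}$ is admissible, and $\mathcal{A}f_{h}=h-\mathbb{E}h(Z)$ is bounded, whence $\mathbb{E}[\mathcal{A}f_{h}(W)]$ is well defined and equals $0$ by hypothesis. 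Thus $\mathbb{E}h(W)=\mathbb{E}h(Z)$ for every bounded continuous $h$, i.e.\ $W\sim\mathrm{VG}_{2}(\nu,\alpha,\beta,0)$. Alternatively, and without reference to the Stein equation, one may take $f(x)=\mathrm{e}^{\mathrm{i}tx}$, whose real and imaginary parts are admissible by (\ref{limf1}): the identity then reduces to the first-order linear ordinary differential equation $\big((t-\mathrm{i}\beta)^{2}+\alpha^{2}\big)\varphi'(t)+(2\nu+1)(t-\mathrm{i}\beta)\varphi(t)=0$ for $\varphi(t)=\mathbb{E}\,\mathrm{e}^{\mathrm{i}tW}$, whose unique solution with $\varphi(0)=1$ is $\big((\alpha^{2}-\beta^{2})/(\alpha^{2}-(\beta+\mathrm{i}t)^{2})\big)^{\nu+1/2}$, the characteristic function of $\mathrm{VG}_{2}(\nu,\alpha,\beta,0)$ (this route requires one first to observe that $\mathbb{E}|W|<\infty$, which itself follows from the bounded-test-function argument).

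The routine-but-lengthy part is the verification of the second-order differential equation for $p$; the genuinely delicate step is the analysis at the origin in the \emph{only if} direction, where $p$ and $p'$ may be unbounded, so that one must extract the precise local form of $|x|^{\nu}K_{\nu}(\alpha|x|)$ — including the logarithmic term for integer $\nu$ — and verify that the boundary terms from the two half-lines cancel rather than vanish.
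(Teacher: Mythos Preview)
Your proposal is correct and follows essentially the same route as the paper: reduce to $\mu=0$, recognise that $(\mathcal{A}f)p$ is a total derivative on each half-line because $p$ satisfies the adjoint second-order ODE (equivalently, the paper integrates by parts twice and invokes the modified Bessel equation), handle the boundary at $\pm\infty$ via the tail estimate (\ref{ptail}), and show the contributions at $0^{+}$ and $0^{-}$ cancel by continuity of $f$; for sufficiency, plug in the bounded Stein solution from Lemma~\ref{forty}. Two small points: the paper normalises further to $\alpha=1$ and uses indicator test functions $h=\chi_{(-\infty,z]}$ rather than bounded continuous $h$, and your claim that the Stein solution has \emph{two} bounded derivatives is slightly off---Lemma~\ref{forty} only gives $f,f'$ bounded, and the paper handles $f''$ separately by reading $|xf''(x)|\leq A+B|x|$ off the Stein equation, which still yields (\ref{limf1}); also your boundary value ``$2\nu A(0)f(0)$'' does not cover $\nu=0$ (where the paper gets a nonzero limit via $xK_{1}(x)\to 1$), so the explicit case split $\nu>0$, $\nu=0$, $-1/2<\nu<0$ in the paper is not entirely avoidable.
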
 

\begin{proof}To simplify the calculations, we prove the result for the special case $\mu=0$, $\alpha =1$, $-1<\beta<1$.  For $W=\alpha (Z-\mu)$ we have that $W \sim \mathrm{VG}_2(\nu,1,\beta,0)$ if and only if $Z \sim \mathrm{VG}_2(\nu,\alpha,\alpha\beta,\mu)$, and so we can deduce the general case by applying a simple linear transformation. 

\emph{Necessity}.  Suppose that $W \sim \mathrm{VG}_2(\nu,1,\beta,0)$.  We split the range of integration to obtain
\[ \label{mint} \mathbb{E}[Wf''(W)+(2\nu +1+2\beta W)f'(W)+((2\nu +1)\beta -(1-\beta^2)W)f(W)] = I_1 +I_2,
\]
where
\begin{align*}I_1&=\int_{0}^\infty\{xf''(x)+(2\nu +1+2\beta x)f'(x)+((2\nu +1)\beta -(1-\beta^2)x)f(x)\} p(x)\,\mathrm{d}x, \\
I_2&=\int_{-\infty}^{0}\{xf''(x)+(2\nu +1+2\beta x)f'(x)+((2\nu +1)\beta -(1-\beta^2)x)f(x)\}p(x)\,\mathrm{d}x,
\end{align*}
and $p(x)=\kappa_{\nu,\beta}x^{\nu}\mathrm{e}^{\beta x}K_{\nu}(x)$, where $\kappa_{\nu,\beta}$ is the normalising constant, is the density of $W$.  The integrals $I_1$ and $I_2$ exist because $f$ is piecewise twice continuously differentiable that satisfies the conditions of (\ref{limf1}), which on recalling the tail behaviour of $p(x)$ given in (\ref{ptail}) ensures that, for $k=0,1,2$, we have that $xp(x)f^{(k)}(x)=o(|x|^{-1})$ as $|x|\rightarrow\infty$.  

Firstly, we consider $I_1$.  Let $A(x)=x$, $B(x)=2\nu +1+2\beta x$ and $C(x)=(2\nu +1)\beta -(1-\beta^2)x$.  Then applying integration by parts twice gives
\begin{align*}I_1&=\int_0^\infty \{A(x)p''(x)+(2A'(x)-B(x))p'(x)+(A''(x)-B'(x)+C(x))p(x)\}f(x)\,\mathrm{d}x\\
&\quad+\Big[A(x)p(x)f'(x)\Big]_0^\infty  +\Big[\{B(x)p(x)-(A(x)p(x))'\}f(x)\Big]_0^\infty
\end{align*}
\begin{align*}
&=\int_0^{\infty}\{xp''(x)+(-2\nu+1-2\beta x)p'(x)+((2\nu-1)\beta-(1-\beta^2)x)p(x)\}f(x)\,\mathrm{d}x\\
&\quad+\Big[xp(x)f'(x)\Big]_0^\infty  +\Big[(2\nu +2\beta x)p(x)f(x)-xp'(x)f(x)\Big]_0^\infty .
\end{align*}

Straightforward differentiation of the function $p(x)=\kappa_{\nu,\beta}x^{\nu}\mathrm{e}^{\beta x}K_{\nu}(x)$ shows that the integrand in the above display is equal to
\begin{align*}\kappa_{\nu,\beta}x^{\nu-1}\mathrm{e}^{\beta x}\{x^2K_{\nu}''(x)+xK_{\nu}'(x)-(x^2+\nu^2)K_{\nu}(x)\}f(x)=0,
\end{align*}
as $K_{\nu}(x)$ is a solution of the modified Bessel differential equation (see (\ref{realfeel})).  

We now note that $p(x)=O(x^{\nu-1/2}\mathrm{e}^{-(1-\beta)x})$ as $x\rightarrow\infty$ (see (\ref{ptail})), and by differentiating $p(x)$ and using the asymptotic formula (\ref{Ktendinfinity}) for $K_{\nu}(x)$ we can see that $p'(x)$ is also of order  $x^{\nu-1/2}\mathrm{e}^{-(1-\beta)x}$ as $x\rightarrow\infty$.  Hence, $xp(x)f'(x)$, $p(x)f(x)$, $xp(x)f(x)$ and $xp'(x)f(x)$ are equal to $0$ in the limit $x\rightarrow\infty$.  The terms $xp(x)f'(x)$ and $xp(x)f(x)$  are also equal to $0$ at the origin, because $f$ and $f'$ are continuous and thus bounded at the origin.  Hence, $I_1$ simplifies to
\begin{equation}\label{i11}I_1= -\lim_{x\downarrow 0}\{(2\nu +2\beta x)p(x)-xp'(x)\}f(x).
\end{equation}
Using formula (\ref{cat}) to differentiate $K_{\nu}(x)$ gives
\begin{align*}I_1&= -\kappa_{\nu,\beta}\lim_{x\downarrow 0}f(x)\{(2\nu+2\beta x)x^{\nu}\mathrm{e}^{\beta x}K_{\nu}(x)-x^{\nu}\mathrm{e}^{\beta x}(xK_{\nu}'(x)+\nu K_{\nu}(x)+\beta xK_{\nu}(x))\} \\
&=-\kappa_{\nu,\beta}\lim_{x\downarrow 0}x^{\nu}f(x)\{-xK_\nu'(x)+(\nu+\beta x)K_{\nu}(x)\}\\
&=-\kappa_{\nu,\beta}\lim_{x\downarrow 0}x^{\nu}f(x)\{\tfrac{1}{2}x(K_{\nu+1}(x)-K_{\nu-1}(x))+(\nu+\beta x)K_{\nu}(x)\}.
\end{align*}

We now calculate the limit in the above expression.  We first consider the case $\nu > 0$.  Applying the asymptotic formula (\ref{Ktend0}) gives  
\[I_1=-\kappa_{\nu,\beta}\lim_{x\downarrow 0}\{2^{\nu-1}\Gamma(\nu+1)+2^{\nu-1} \nu \Gamma(\nu)\}f(x)=-\kappa_{\nu,\beta}\lim_{x\to 0^{+}}2^{\nu}\Gamma(\nu+1)f(x),\] 
since $\nu\Gamma(\nu)=\Gamma(\nu+1)$.  Now consider the case $\nu =0$.  We use the fact that $K_{1}(x)=K_{-1}(x)$ to obtain
\[I_1=-\kappa_{0,\beta}\lim_{x\downarrow 0}f(x)xK_1(x)=-\kappa_{0,\beta}\lim_{x\to 0^{+}}\Gamma(1)f(x)=-\kappa_{0,\beta}\lim_{x\to 0^{+}}2^{0}\Gamma(1+1)f(x),\] 
since $\Gamma(1)=\Gamma(2)$.  Therefore we have
\[I_1=-\kappa_{\nu,\beta}\lim_{x\downarrow 0}2^{\nu}\Gamma(\nu+1)f(x) \quad \mbox{for all } \nu \geq 0.\]
Finally, we consider the case $-1/2<\nu<0$.  We use the fact that $K_{-\lambda}(x)=K_{\lambda}(x)$ to obtain  
\begin{align*}I_1&=-\kappa_{\nu,\beta}\lim_{x\downarrow 0}\{\tfrac{1}{2}x^{\nu+1}(K_{\nu+1}(x)+K_{1-\nu}(x))+\nu x^{\nu}K_{-\nu}(x)\}f(x) \\
&=-\kappa_{\nu,\beta}\lim_{x\downarrow 0}\{2^{\nu-1}\Gamma(\nu+1)+2^{\nu-1}(\Gamma(1-\nu)-(-\nu)\Gamma(-\nu))x^{2\nu}\}f(x) \\
&=-\kappa_{\nu,\beta}\lim_{x\downarrow 0}2^{\nu-1}\Gamma(\nu+1)f(x) \quad \mbox{for }-1/2<\nu<0.
\end{align*}

A similar argument (with the difference being that here $p(x)=\kappa_{\nu,\beta}(-x)^{\nu}\mathrm{e}^{\beta x}K_{\nu}(-x)$) shows that
\begin{equation*}I_2=\begin{cases} \kappa_{\nu,\beta}\lim_{x\uparrow 0}2^{\nu}\Gamma(\nu+1)f(x), & \quad \nu\geq0, \\
\kappa_{\nu,\beta}\lim_{x\uparrow 0}2^{\nu-1}\Gamma(\nu+1)f(x), & \quad -1/2<\nu<0. \end{cases} \\
\end{equation*}
As $f$ is continuous, it follows that $I_1=-I_2$ (and so $I_1+I_2=0$), which completes the proof of necessity.

\emph{Sufficiency}.  For fixed $z \in \mathbb{R}$, let $f(x) := f_z(x)$ be a bounded solution to the differential equation
\begin{equation} \label{king} xf''(x)+(2\nu +1+2\beta x)f'(x)+((2\nu +1)\beta -(1-\beta^2)x)f(x)=\chi_{(-\infty,z]}(x)-K_{\nu,\beta}(z),
\end{equation}
where $K_{\nu,\beta}(z)$ is the cumulative distribution function of the $\mathrm{VG}_2(\nu,1,\beta,0)$ distribution.  Using Lemma \ref{forty} (below) with $h(x)=\chi_{(-\infty,z]}(x)$ we see that a solution to (\ref{king}) is given by
\begin{align*}f_z(x)&=-\frac{e^{-\beta x}K_{\nu}(|x|)}{|x|^{\nu}}\int_0^xe^{\beta y} |y|^{\nu}I_{\nu}(|y|)[\chi_{(-\infty,z]}(x) -K_{\nu,\beta}(z)]\,\mathrm{d}y \\
&\quad -\frac{e^{-\beta x}I_{\nu}(|x|)}{|x|^{\nu}}\int_x^{\infty}e^{\beta y} |y|^{\nu}K_{\nu}(|y|)[\chi_{(-\infty,z]}(x) -K_{\nu,\beta}(z)]\,\mathrm{d}y.
\end{align*}
This  solution and its first derivative are bounded (see Lemma \ref{forty}) and is piecewise twice differentiable.  As $f_z$ and $f_z'$ are bounded, they satisfy the condition (\ref{limf1}) (with $\alpha=1$) and $f_z''$ must also satisfy the condition because, from (\ref{king}),
\begin{align*}|xf_z''(x)|\leq |(2\nu +1+2\beta x)f_z'(x)|+|((2\nu +1)\beta -(1-\beta^2)x)f_z(x)|+2 \leq A+B|x|
\end{align*}
for some constants $A$ and $B$.  Hence, if (\ref{lemmaone}) holds for all piecewise twice continuously differentiable functions satisfying (\ref{limf1}) (with $\alpha=1$), then by (\ref{king}),
\begin{align*}0&=\mathbb{E}[Wf_z''(W)+(2\nu +1+2\beta W)f_z'(W)+((2\nu +1)\beta -(1-\beta^2)W)f_z(W)] \\
&=\mathbb{E}[\chi_{(-\infty,z]}(W) -K_{\nu,\beta}(z)]\\
&=\mathbb{P}(W\leq z)-K_{\nu,\beta}(z).
\end{align*}
Therefore $W$ has the $\mathrm{VG}_2(\nu,1,\beta,0)$ distribution.
\end{proof}

Lemma \ref{ghds} suggests the following Stein equation for the $\mathrm{VG}_2(\nu,\alpha,\beta,\mu)$ distribution:
\begin{equation} \label{muggers} (x-\mu)f''(x) + (2\nu + 1 +2\beta (x-\mu))f'(x) + ((2\nu+1)\beta - (\alpha^2 -\beta^2)(x-\mu))f(x) = h(x)-\widetilde{\mathrm{VG}}^{\nu,\alpha}_{\beta,\mu}h, \end{equation}
where $\widetilde{\mathrm{VG}}^{\nu,\alpha}_{\beta,\mu}h$ denotes the quantity $\mathbb{E}(h(X))$ for $X \sim \mathrm{VG}_2(\nu,\alpha,\beta,\mu)$.

In order to simplify the calculations of Section 3.2, we will make use of the Stein equation for the $\mathrm{VG}_2(\nu,1,\beta,0)$ distribution, where $-1<\beta<1$.  Results for the full parametrisation can then be recovered by making a simple linear transformation.  For the $\mathrm{VG}_2(\nu,1,\beta,0)$ distribution, the Stein equation (\ref{muggers}) reduces to
\begin{equation} \label{eighty} xf''(x) + (2\nu+1 +2\beta x)f'(x) + ((2\nu+1)\beta - (1 -\beta^2)x)f(x) = h(x) -\widetilde{\mathrm{VG}}^{\nu,1}_{\beta,0}h.
\end{equation}
Changing parametrisation in (\ref{muggers}) via (\ref{parameter}) and multiplying through by $\sigma^2$ gives the $\mathrm{VG}_1(r,\theta,\sigma,\mu)$ Stein equation (\ref{nice}), which we presented in the introduction.  

\begin{remark}\label{nice eqn777}\emph{The $\mathrm{VG}_1(r,\theta,\sigma,\mu)$ Stein equation has the interesting property of being a (true) second order linear differential equation.  Such Stein equations are uncommon in the literature, although Pek\"oz et al$.$ \cite{pekoz}, Gaunt \cite{gaunt pn} and Pike and Ren \cite{pike} have obtained similar operators for the Kummer densities, the product of two mean zero normals, and the Laplace distribution, respectively.  Gaunt \cite{gaunt pn} and Pike and Ren \cite{pike} used the method of variation of parameters (see Collins \cite{collins} for an account of the method) to solve their equations, whereas Pek\"oz et al$.$ used a substitution to turn their second order operator into a first order operator, which leads to a double integral solution.  We attempted to follow this approach but the double integral solution we obtained was rather complicated.  However, solving using variation of parameters lead to a representation of the solution (see Lemma \ref{forty}) that enabled us to obtain uniform bounds for the solution and its first four derivatives (see Lemma \ref{vgboundz} and Theorem \ref{vgderbound}).}

\emph{We could have obtained a first order Stein operator for the $\mathrm{VG}_1(r,\theta,\sigma,\mu)$ distributions using the density approach of Stein et al$.$ \cite{stein3}.  However, this approach would lead to an operator involving the modified Bessel function $K_{\nu}(x)$.  Using such a Stein equation to prove approximation results with standard coupling techniques would be difficult.  In contrast, our $\mathrm{VG}_1(r,\theta,\sigma,\mu)$ Stein equation  is much more amenable to the use of couplings, as we shall see in Section 4.  Pek\"oz et al$.$ \cite{pekoz} encountered a similar situation (the density approach would lead to an operator involving the Kummer function) and proceeded as we did by instead considering a second order operator with simple coefficients.}
\end{remark}

\subsubsection{Special cases of the Variance-Gamma Stein equation}

Here we note a number of interesting special cases of the $\mathrm{VG}_1(r,\theta,\sigma,\mu)$ Stein equation.  Whilst the Gamma distribution is not covered by Lemma \ref{ghds}, we note that letting $r=2s$, $\theta=(2\lambda)^{-1}$, $\mu=0$ and taking the limit $\sigma\rightarrow 0$ in (\ref{nice}) gives the Stein equation
\[\lambda^{-1}(xf'(x) +(s - \lambda x)f(x)) = h(x) -\mathrm{VG}^{2s,(2\lambda)^{-1}}_{0,0}h,\]
which, recalling (\ref{delight}), we recognise as the $\Gamma(s,\lambda)$ Stein equation (\ref{delight}) of Luk \cite{luk} (up to a multiplicative factor).  

We also note that a Stein equation for the $\mathrm{VG}_1(r,0,\sigma/\sqrt{r},\mu)$ distribution is
\[\frac{\sigma^2}{r}(x-\mu)f''(x)+\sigma^2f'(x)-(x-\mu)f(x)=h(x)-\mathrm{VG}_{\sigma/\sqrt{r},\mu}^{r,0}h,\]
which in the limit $r\rightarrow\infty$ is the classical $N(\mu,\sigma^2)$ Stein equation. 

Taking $r=1$, $\sigma=\sigma_X\sigma_Y$ and $\mu=0$ in (\ref{nice}) gives the following Stein equation for distribution of the product of independent $N(0,\sigma_X^2)$ and $N(0,\sigma_Y^2)$ random variables (see part (iii) of Proposition \ref{norgamlap}):
\begin{equation*}\sigma_X^2\sigma_Y^2xf''(x) + \sigma_X^2\sigma_Y^2f'(x) -xf(x) = h(x) -\mathrm{VG}^{1,0}_{\sigma_X\sigma_Y,0}h.
\end{equation*} 
This Stein equation is in agreement with the Stein equation for the product of two independent, zero mean normal random variables that was obtained by Gaunt \cite{gaunt pn}.

Finally, we deduce a Stein equation for the Laplace distribution.  Recalling part (ii) of Proposition \ref{norgamlap}, we have that $\mathrm{Laplace}(0,\sigma)=\mathrm{VG}_1(2,0,\sigma,0)$.  Thus, we deduce the following Stein equation for the Laplace distribution:
\begin{equation}\label{roblap}\sigma^2xf''(x)+2\sigma^2f'(x)-xf(x)=h(x)-\mathbb{E}h(X),
\end{equation}
where $X\sim\mathrm{Laplace}(0,\sigma)$.  Pike and Ren \cite{pike} have obtained an alternative Stein characterisation of the Laplace distribution, which leads to the  initial value problem
\begin{equation}\label{lappike} f(x)-\sigma^2f''(x)=h(x)-\mathbb{E}h(X), \qquad f(0)=0. 
\end{equation}
They have also solved (\ref{lappike}) and have obtained uniform bounds for the solution and its first three derivatives.  Their characterisation was obtained by a repeated application of the density method, and is similar to the characterisation for the Exponential distribution that results from the density method (see Stein et al$.$ \cite{stein3}, Example 1.6), which leads to the Stein equation  
\begin{equation}\label{delight1}f'(x)-\lambda f(x)+\lambda f(0+)=h(x)-\mathbb{E}h(Y),
\end{equation} 
where $Y\sim\mathrm{Exp}(\lambda)$.  Since $\mathrm{Exp}(\lambda)=\Gamma(1,\lambda)$, equation (\ref{delight1}) and the Gamma Stein equation (\ref{delight}) (with $r=1$) give a choice of Stein equations for applications involving the Exponential distribution.  Both equations have been shown to be effective in the study of Exponential approximation, but in certain situations one equation may prove to be more useful than the other; see, for example, Pickett \cite{pickett} for a utilisation of (\ref{delight}), and  Pek\"oz and R\"ollin \cite{pekoz1} for an application involving (\ref{delight1}).  We would expect a similar situation to occur with the Laplace Stein equations (\ref{roblap}) and (\ref{lappike}), although we do not further investigate the use of these Stein equations in Laplace approximation. 

\subsubsection{Applications of Lemma \ref{ghds}}
The main application of Lemma \ref{ghds} that is considered in this paper involves the use of the resulting Stein equation in the proofs of the limit theorems of Section 4.  There are, however, other interesting results that follow from Lemma \ref{ghds}.  We consider a couple here.  

Suppose $W\sim\mathrm{VG}_2(\nu,\alpha,\beta,0)$.  Then taking $f(x)=\mathrm{e}^{tx}$, where $|t+\beta|<\alpha$ (which ensures that condition (\ref{limf1}) is satisfied), in the charactering equation (\ref{lemmaone}) and setting $M(t)=\mathbb{E}[\mathrm{e}^{tW}]$, we deduce that $M(t)$ satisfies the differential equation 
\begin{equation*}(t^2+2\beta t-(\alpha^2-\beta^2))M'(t)+(2\nu+1)(t+\beta)M(t)=0.
\end{equation*}
Solving this equation subject to the condition $M(0)=1$ then gives that the moment generating function of the Variance-Gamma distribution with $\mu=0$ is
\begin{equation*}M(t)=\bigg(\frac{\alpha^2-\beta^2}{\alpha^2-(\beta+t)^2}\bigg)^{\nu+1/2}=(1-2\theta t+\sigma^2t^2)^{-r/2}.
\end{equation*}

Similarly, taking $f(x)=x^k$ and setting $M_{k}=\mathbb{E}W^k$ leads to the following recurrence equation for the moments of the Variance-Gamma distributions with $\mu=0$:
\begin{equation*}(\alpha^2-\beta^2)M_{k+1}-\beta(2k+2\nu+1)M_k-k(2\nu+k)M_{k-1}=0,
\end{equation*}
which in terms of the first parametrisation is 
\begin{equation*}M_{k+1}-\theta(2k+r)M_k-\sigma^2k(r+k-1)M_{k-1}=0.
\end{equation*}
We have that $M_0=1$ and $M_1=(2\nu+1)\beta/(\alpha^2-\beta^2)=r\theta$ (see (\ref{vgmoment})), and thus we can solve these recurrence equations by forward substitution to obtain the moments of the Variance-Gamma distributions.  As far as the author is aware, these recurrence equations are new, although Scott et al$.$ \cite{scott} have already established a formula for the moments of general order of the Variance-Gamma distributions.

\subsection{Smoothness estimates for the solution of the Stein equation}

We now turn our attention to solving the $\mathrm{VG}_2(\nu,1,\beta,0)$ Stein equation (\ref{eighty}).  Handling this particular set of restricted parameters simplifies the calculations and allows us to write down the solution of $\mathrm{VG}_1(r,\theta,\sigma,\mu)$ after a straightforward change of variables.  

Since the homogeneous version of the $\mathrm{VG}_2(\nu,1,\beta,0)$ Stein equation has a simple fundamental system of solutions (see the proof of Lemma \ref{forty} in Appendix A), we consider variation of parameters to be an appropriate method of solution.  We carry out these calculations in Appendix A and present the solution in Lemma \ref{forty}.  We could also have solved the Stein equation by using generator theory.  Multiplying both sides of (\ref{eighty}) by $\frac{1}{x}$, we recognise the left-hand side of the equation as the generator of a Bessel process with drift with killing (for an account of the Bessel process with drift see Linetsky \cite{linetsky}).  The Stein equation can then be solved using generator theory (see Durrett \cite{durrett}, pp$.$ 249).  For a more detailed account of the application of the generator approach to Stein's method for Variance-Gamma distributions see Gaunt \cite{gaunt thesis}.

In the following lemma we give the solution to the Stein equation.  The proof is given in Appendix A.

\begin{lemma} \label{forty}
Let $h:\mathbb{R} \rightarrow \mathbb{R}$ be a measurable function with $\mathbb{E}|h(X)|<\infty$, where $X \sim \mathrm{VG}_2(\nu,1,\beta,0)$, and $\nu>-1/2$ and $-1<\beta<1$.  Then a solution $f:\mathbb{R} \rightarrow \mathbb{R}$ to the Variance-Gamma Stein equation (\ref{eighty}) is given by
\begin{align} \label{ink} f(x) &=-\frac{e^{-\beta x} K_{\nu}(|x|)}{|x|^{\nu}} \int_0^x e^{\beta y} |y|^{\nu} I_{\nu}(|y|) [h(y)- \widetilde{\mathrm{VG}}_{\beta,0}^{\nu,1}h] \,\mathrm{d}y \nonumber \\
&\quad-\frac{e^{-\beta x} I_{\nu}(|x|)}{|x|^{\nu}} \int_x^{\infty} e^{\beta y} |y|^{\nu} K_{\nu}(|y|)[h(y)- \widetilde{\mathrm{VG}}_{\beta,0}^{\nu,1}h]\,\mathrm{d}y,
\end{align}
where the modified Bessel functions $I_{\nu}(x)$ and $K_{\nu}(x)$ are defined in Appendix B.  Suppose further that $h$ is bounded, then $f(x)$ and $f'(x)$ and are bounded for all $x\in\mathbb{R}$.  Moreover, this is the unique bounded solution when $\nu\geq 0$ and $-1<\beta<1$.
\end{lemma}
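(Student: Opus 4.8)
The plan is to solve the linear second-order equation (\ref{eighty}) by variation of parameters and then read off boundedness and uniqueness from the asymptotics of the modified Bessel functions $I_\nu$ and $K_\nu$. After dividing (\ref{eighty}) by $x$, the substitution $f(x)=e^{-\beta x}|x|^{-\nu}g(x)$ turns the homogeneous equation into the modified Bessel equation for $g$, so on $(0,\infty)$ (and, in identical form, on $(-\infty,0)$) a fundamental system is
\[f_1(x)=\frac{e^{-\beta x}I_{\nu}(|x|)}{|x|^{\nu}}, \qquad f_2(x)=\frac{e^{-\beta x}K_{\nu}(|x|)}{|x|^{\nu}}.\]
First I would check directly that $f_1$ and $f_2$ solve the homogeneous equation, by substituting and invoking the modified Bessel differential equation (\ref{realfeel}), just as in the computation appearing in the proof of Lemma \ref{ghds}.

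Next I would compute the Wronskian: writing $f_i=e^{-\beta x}|x|^{-\nu}g_i$ with $g_1=I_\nu$, $g_2=K_\nu$ gives $W(f_1,f_2)=e^{-2\beta x}|x|^{-2\nu}W(I_\nu,K_\nu)$, and the standard relation $I_\nu(t)K_\nu'(t)-I_\nu'(t)K_\nu(t)=-1/t$ yields $W(f_1,f_2)(x)=-e^{-2\beta x}|x|^{-2\nu-1}$. Substituting this, together with the inhomogeneity $(h(x)-\widetilde{\mathrm{VG}}^{\nu,1}_{\beta,0}h)/x$, into the variation-of-parameters formula, and choosing the free limits of integration (as convergence dictates) to be $0$ in the integral carrying the $K_\nu$-prefactor and $\infty$ in the integral carrying the $I_\nu$-prefactor, produces exactly (\ref{ink}); the two stray factors of $1/x$ cancel. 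I would then confirm that the two integrals in (\ref{ink}) converge for any $h$ with $\mathbb{E}|h(X)|<\infty$: near the origin $|y|^{\nu}I_{\nu}(|y|)=O(|y|^{2\nu})$ is integrable since $\nu>-1/2$, while as $|y|\to\infty$ the weight $e^{\beta y}|y|^{\nu}K_{\nu}(|y|)$ is, by (\ref{Ktendinfinity}), comparable to the $\mathrm{VG}_2$ density (\ref{ptail}), so the integrability hypothesis on $h$ applies directly.

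For boundedness, assume $\|h\|<\infty$. Differentiating (\ref{ink}), the boundary terms produced by the two integrals reduce, after the prefactors combine with the integrands, to $\pm I_{\nu}(|x|)K_{\nu}(|x|)$ and so cancel, leaving $f'(x)=-f_2'(x)\int_0^x(\cdots)\,\mathrm{d}y-f_1'(x)\int_x^{\infty}(\cdots)\,\mathrm{d}y$. One then bounds $|f(x)|$ and $|f'(x)|$ term by term using the asymptotics (\ref{Ktend0}) and (\ref{Ktendinfinity}) of $I_\nu$, $K_\nu$ and their derivatives: as $|x|\to\infty$ the exponentially small prefactor $e^{-\beta x}K_{\nu}(|x|)|x|^{-\nu}$ beats the at most exponential growth of the inner integral, and as $x\to 0$ the blow-up of $e^{-\beta x}K_{\nu}(|x|)|x|^{-\nu}$ (of order $|x|^{-2\nu}$ when $\nu>0$, of order $\log|x|$ when $\nu=0$, and bounded when $-1/2<\nu<0$) is absorbed by the vanishing, of order $|x|^{2\nu+1}$, of $\int_0^x e^{\beta y}|y|^{\nu}I_{\nu}(|y|)(\cdots)\,\mathrm{d}y$; the term with the $I_\nu$-prefactor is harmless because $I_{\nu}(|x|)|x|^{-\nu}$ stays bounded near $0$. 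The same asymptotics also show $f$ is continuous at $0$. I expect this near-origin bookkeeping, with its three-way split on the sign of $\nu$ and the behaviour at the singular point $x=0$ generally, to be the main obstacle, though it is routine given the Bessel asymptotics.

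Finally, for uniqueness when $\nu\geq 0$ and $-1<\beta<1$: the bounded solutions of (\ref{eighty}) form a coset of the space of bounded solutions of the homogeneous equation, and (\ref{ink}) is one bounded solution, so it suffices to show the only bounded homogeneous solution is $0$. On $(0,\infty)$ every homogeneous solution is $c_1f_1+c_2f_2$; by (\ref{Ktendinfinity}), $f_1(x)\sim(2\pi)^{-1/2}x^{-\nu-1/2}e^{(1-\beta)x}\to\infty$ as $x\to\infty$ whereas $f_2(x)\to 0$, so boundedness at $+\infty$ forces $c_1=0$, and then $c_2f_2$ must be bounded near $0^{+}$, which by (\ref{Ktend0}) fails unless $c_2=0$ because $K_{\nu}(|x|)|x|^{-\nu}\to\infty$ as $x\to 0$ exactly when $\nu\geq 0$. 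The identical argument on $(-\infty,0)$ shows the solution vanishes off the origin, hence (by continuity) everywhere. The hypothesis $\nu\geq 0$ is essential: when $-1/2<\nu<0$ the function $f_2$ is bounded on all of $\mathbb{R}$, so uniqueness genuinely fails there.
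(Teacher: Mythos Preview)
Your approach is essentially the same as the paper's: variation of parameters with the fundamental system $e^{-\beta x}|x|^{-\nu}I_{\nu}(|x|)$, $e^{-\beta x}|x|^{-\nu}K_{\nu}(|x|)$, the Wronskian identity, the choice of endpoints $0$ and $\infty$, and the identical uniqueness argument via the asymptotics of $I_\nu$ and $K_\nu$.

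The one noteworthy difference is in the boundedness step. The paper does not carry out the asymptotic analysis you sketch; instead it invokes the ready-made uniform inequalities (\ref{vgel11})--(\ref{vgel33}) from \cite{gaunt bes}, which bound exactly the products $\frac{d^n}{dx^n}\big(e^{-\beta x}I_{\nu}(x)x^{-\nu}\big)\int_x^{\infty}e^{\beta t}t^{\nu}K_{\nu}(t)\,dt$ and the companion $K$-prefactor term. Your direct-asymptotics route is fine in principle, but one phrase needs sharpening: as $x\to\infty$ the prefactor $e^{-\beta x}K_{\nu}(x)x^{-\nu}$ decays like $x^{-\nu-1/2}e^{-(1+\beta)x}$ while $\int_0^x e^{\beta y}y^{\nu}I_{\nu}(y)\,dy$ grows like $x^{\nu-1/2}e^{(1+\beta)x}$, so the exponentials \emph{exactly cancel} and it is the combined polynomial factor $x^{-1}$ that gives boundedness (and likewise for the $I_\nu$-prefactor term). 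Saying the prefactor ``beats'' the integral slightly understates the bookkeeping required; the paper's approach sidesteps this by citing the pre-packaged bounds.
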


\begin{remark} \label{mark} \emph{The equality 
\begin{equation} \label{useful} \int_{-\infty}^{x}e^{\beta y}|y|^{\nu}K_{\nu}(|y|)[h(y)-\widetilde{\mathrm{VG}}^{\nu,1}_{\beta,0}h]\,\mathrm{d}y=-\int_{x}^{\infty}e^{\beta y}|y|^{\nu}K_{\nu}(|y|)[h(y)-\widetilde{\mathrm{VG}}^{\nu,1}_{\beta,0}h]\,\mathrm{d}y
\end{equation}
is very useful when it comes to obtaining smoothness estimates for the solution to the Stein equation.  The equality ensures that we can restrict our attention to bounding the derivatives in the region $x\geq 0$, provided we obtain these bounds for both positive and negative $\beta$. }
\end{remark}

By direct calculations it is possible to bound the derivatives of the solution of the $\mathrm{VG}_2(\nu,1,\beta,0)$ Variance-Gamma Stein equation (\ref{eighty}).  Gaunt \cite{gaunt thesis} carried out these (rather lengthy) calculations for case $\beta=0$, to obtain uniform bounds on the solution of the Stein equation (\ref{eighty}) and its first four derivatives.  By a change of variables it is then possible to establish smoothness estimates for the solution of the $\mathrm{VG}_1(r,0,\sigma,\mu)$ Stein equation (\ref{nice}).  

Bounds on the first four derivatives of the solution of the $\mathrm{VG}_1(r,0,\sigma,\mu)$ Stein equation are sufficient for the limit theorems of Section 4.  However, it would be desirable to extend these bounds to the general case of the $\mathrm{VG}_1(r,\theta,\sigma,\mu)$ Stein equation.  Another open problem is to obtain uniform bounds for the derivatives of all order for the solution of the $\mathrm{VG}_1(r,0,\sigma,\mu)$ Stein equation.  This has been achieved for the normal and Gamma Stein equations (see the bounds of Goldstein and Rinott \cite{goldstein1} and Luk \cite{luk})) using the generator approach.  Gaunt \cite{gaunt thesis} made some progress towards the goal of achieving such bounds via the generator approach, but the problem remains unsolved. 

The following smoothness estimates for the solution of the $\mathrm{VG}_2(\nu,1,0,0)$ Stein equation were established by Gaunt \cite{gaunt thesis}.

\begin{lemma}\label{vgboundz}Let $\nu>-1/2$ and suppose that $h\in C_b^3(\mathbb{R})$.  Then the solution $f$, as given by (\ref{ink}), to the $\mathrm{VG}_2(\nu,1,0,0)$ Stein equation, and its first four derivatives are bounded as follows:
\begin{align*}\|f\|&\leq \bigg(\frac{1}{2\nu+1}+\frac{\pi\Gamma(\nu+1/2)}{2\Gamma(\nu+1)}\bigg)\|h-\widetilde{\mathrm{VG}}_{0,0}^{\nu,1}h\|, \\
\|f'\| &\leq \frac{2}{2\nu+1}\|h-\widetilde{\mathrm{VG}}_{0,0}^{\nu,1}h\|, \\
\|f''\| &\leq\bigg(\frac{\sqrt{\pi}}{2\sqrt{\nu+1/2}}+\frac{1}{2\nu+1}\bigg)\Big[3\|h'\|+4\|h-\widetilde{\mathrm{VG}}_{0,0}^{\nu,1}h\|\Big], \\
\|f^{(3)}\| &\leq \bigg(\frac{\sqrt{\pi}}{2\sqrt{\nu+1/2}}+\frac{1}{2\nu+1}\bigg)\Big[5\|h''\|+18\|h'\|+18\|h-\widetilde{\mathrm{VG}}_{0,0}^{\nu,1}h\|\Big]\\
&\quad+\frac{1}{v(\nu)}\|h-\widetilde{\mathrm{VG}}_{0,0}^{\nu,1}h\|, \\
\|f^{(4)}\| &\leq \bigg(\frac{\sqrt{\pi}}{2\sqrt{\nu+1/2}}+\frac{1}{2\nu+1}\bigg)\Big[8\|h^{(3)}\|+52\|h''\|+123\|h'\|+123\|h-\widetilde{\mathrm{VG}}_{0,0}^{\nu,1}h\|\Big]\\
&\quad+\frac{1}{v(\nu)}\Big[\|h'\|+\|h-\widetilde{\mathrm{VG}}_{0,0}^{\nu,1}h\|\Big],
\end{align*}
where $v(\nu)$ is given by
\begin{equation*}v(\nu)= \begin{cases} 2^{2\nu+1}\nu!(\nu+2)!(2\nu+1), & \quad \nu\in \mathbb{N}, \\
|\sin(\pi\nu)|2^{2\nu}\Gamma(\nu+1)\Gamma(\nu+4)(2\nu+1), & \quad \nu>-1/2 \mbox{ and } \nu\notin\mathbb{N}.\end{cases}
\end{equation*}
\end{lemma}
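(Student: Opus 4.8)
The plan is to work with the explicit formula \eqref{ink} for the solution $f$ of the $\mathrm{VG}_2(\nu,1,0,0)$ Stein equation (i.e.\ with $\beta=0$), and to bound each derivative by differentiating this representation and estimating the resulting Bessel-function integrals. Throughout I would write $\widetilde h(y):=h(y)-\widetilde{\mathrm{VG}}_{0,0}^{\nu,1}h$, so that $\|\widetilde h\|\le\|h-\widetilde{\mathrm{VG}}_{0,0}^{\nu,1}h\|$, and I would exploit Remark \ref{mark}: using the identity \eqref{useful} one may replace $\int_x^\infty$ by $-\int_{-\infty}^x$ whenever convenient, so that it suffices to obtain all bounds on the region $x\ge 0$. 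On $x\ge 0$ the solution reads $f(x)=-K_\nu(x)x^{-\nu}\int_0^x y^\nu I_\nu(y)\widetilde h(y)\,\mathrm dy - I_\nu(x)x^{-\nu}\int_x^\infty y^\nu K_\nu(y)\widetilde h(y)\,\mathrm dy$, and the key analytic inputs are the standard facts from Appendix B: the behaviour of $I_\nu,K_\nu$ near $0$ and near $\infty$ (formulae \eqref{Ktend0}, \eqref{Ktendinfinity} and their $I_\nu$ analogues), the derivative identities (e.g.\ \eqref{cat} and $\frac{\mathrm d}{\mathrm dx}(x^{-\nu}K_\nu(x))=-x^{-\nu}K_{\nu+1}(x)$, $\frac{\mathrm d}{\mathrm dx}(x^{-\nu}I_\nu(x))=x^{-\nu}I_{\nu+1}(x)$), the Wronskian $I_\nu(x)K_\nu'(x)-I_\nu'(x)K_\nu(x)=-1/x$, and the integral evaluations $\int_0^\infty y^\nu K_\nu(y)\,\mathrm dy = 2^{\nu-1}\sqrt\pi\,\Gamma(\nu+\tfrac12)$ and similar, which produce the constants $\tfrac{\pi\Gamma(\nu+1/2)}{2\Gamma(\nu+1)}$ and $\tfrac{\sqrt\pi}{2\sqrt{\nu+1/2}}$ appearing in the statement.

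For $\|f\|$ and $\|f'\|$ I would proceed directly. Bounding $|f(x)|$ reduces to estimating $K_\nu(x)x^{-\nu}\int_0^x y^\nu I_\nu(y)\,\mathrm dy$ and $I_\nu(x)x^{-\nu}\int_x^\infty y^\nu K_\nu(y)\,\mathrm dy$ uniformly in $x\ge 0$; the first of these is handled near $0$ using the small-argument asymptotics (which give a factor behaving like $1/(2\nu+1)$ after integrating $y^{2\nu}$) and for large $x$ using $I_\nu(x)K_\nu(x)\le C/x$, while the second is largest at $x=0$, where it tends to the constant $2^{-\nu-1}\sqrt\pi\,\Gamma(\nu+\tfrac12)\cdot 2^\nu/\Gamma(\nu+1)$-type expression giving the $\tfrac{\pi\Gamma(\nu+1/2)}{2\Gamma(\nu+1)}$ term. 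For $f'$ the crucial simplification is that, when one differentiates \eqref{ink}, the two ``boundary'' terms coming from the Leibniz rule on the integral limits cancel by the Wronskian identity, leaving $f'(x)=\nu x^{-\nu-1}K_\nu(x)\int_0^x y^\nu I_\nu(y)\widetilde h\,\mathrm dy + x^{-\nu}K_{\nu+1}(x)\int_0^x y^\nu I_\nu(y)\widetilde h\,\mathrm dy + (\text{analogous }I\text{-terms})$; collecting these and using the same asymptotic estimates yields the clean bound $\|f'\|\le\frac{2}{2\nu+1}\|\widetilde h\|$. Note that for $f'$ no derivative of $h$ is needed — this reflects the fact that $f'$ is obtained from $f$ and $h$ algebraically only after one more differentiation, so the first genuine appearance of $\|h'\|$ is in $\|f''\|$.

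For $\|f''\|$, $\|f^{(3)}\|$, $\|f^{(4)}\|$ I would differentiate the Stein equation \eqref{eighty} itself rather than \eqref{ink}. Differentiating $xf''+(2\nu+1)f'-xf=\widetilde h$ gives $xf^{(3)}=-( 2\nu+2)f''+xf'+f+h'-\widetilde h+\cdots$, and more generally $f^{(k)}$ for $k\ge 3$ is expressed, away from $0$, in terms of $f^{(k-1)},\dots,f,h^{(k-2)},\dots,h'$ divided by $x$; so the only real work is to control $f^{(k)}$ near $x=0$, and there one returns to the explicit representation \eqref{ink} and its first few derivatives, using the small-$x$ expansions of $I_\nu,K_\nu$ (which, when $\nu\notin\mathbb N$, involve the two branches $x^\nu$ and $x^{-\nu}$, and when $\nu\in\mathbb N$ involve a logarithm) to check that the apparent singularity cancels and to extract the correct constant. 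This is where the numbers $v(\nu)$ enter: the genuinely new, non-smooth contribution to $f^{(3)}$ and $f^{(4)}$ near the origin comes from the subleading term of $K_\nu$, whose coefficient is $2^{\nu-1}\Gamma(-\nu)$ (for non-integer $\nu$) or the logarithmic term (for integer $\nu$), and tracking it through two or three more differentiations produces the reciprocal of $v(\nu)=|\sin(\pi\nu)|2^{2\nu}\Gamma(\nu+1)\Gamma(\nu+4)(2\nu+1)$, respectively $2^{2\nu+1}\nu!(\nu+2)!(2\nu+1)$; the reflection formula $\Gamma(-\nu)\Gamma(\nu+1)=-\pi/\sin(\pi\nu)$ links the two cases. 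The main obstacle will be precisely this bookkeeping near $x=0$ for the third and fourth derivatives: one must carry enough terms of the Bessel expansions to see the cancellation of the singular pieces, be careful about the integer-$\nu$ logarithmic case separately, and then assemble the resulting estimates with the recursive bounds from differentiating \eqref{eighty} so that all the constants $3,4$; $5,18,18$; $8,52,123,123$ come out as stated — a lengthy but in principle routine computation, which is why (as the surrounding text says) it is relegated to \cite{gaunt thesis}.
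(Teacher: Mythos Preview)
The paper does not actually give a proof of this lemma: it simply states the bounds and attributes the (lengthy) calculations to \cite{gaunt thesis}. So there is no in-paper argument to compare yours against; all the paper says is that the bounds come from ``direct calculations'' on the explicit solution \eqref{ink} in the case $\beta=0$. Your outline is entirely consistent with that description --- working from \eqref{ink}, reducing to $x\ge 0$ via Remark~\ref{mark}, using the asymptotics and derivative identities for $I_\nu,K_\nu$, and for the higher derivatives combining differentiation of the Stein equation \eqref{eighty} with a careful local analysis near $x=0$ --- and is exactly the kind of argument one expects behind the citation.

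Two small technical remarks on your sketch. First, when you differentiate \eqref{ink} once, the two endpoint contributions from the Leibniz rule cancel \emph{trivially} (they are $-K_\nu I_\nu\widetilde h + I_\nu K_\nu\widetilde h$), not via the Wronskian; the Wronskian identity is what produces the $\widetilde h$-term on the \emph{next} differentiation, in $f''$. Relatedly, your displayed expression for $f'$ is slightly off: using $\frac{\mathrm d}{\mathrm dx}(x^{-\nu}K_\nu(x))=-x^{-\nu}K_{\nu+1}(x)$ and its $I_\nu$ analogue gives simply
\[
f'(x)=x^{-\nu}K_{\nu+1}(x)\!\int_0^x y^\nu I_\nu(y)\widetilde h(y)\,\mathrm dy
\;-\;x^{-\nu}I_{\nu+1}(x)\!\int_x^\infty y^\nu K_\nu(y)\widetilde h(y)\,\mathrm dy,
\]
with no extra $\nu x^{-\nu-1}K_\nu$ term. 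Second, your heuristic for the origin of $v(\nu)$ --- tracking the subleading branch of $K_\nu$ (or the logarithmic term when $\nu\in\mathbb N$) through the higher derivatives near $0$, with the reflection formula linking the two cases --- is the right intuition, but be aware that making this precise is exactly the ``rather lengthy'' part the paper defers to the thesis; it is not a one-line observation.
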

The bounds given in Lemma \ref{vgboundz} are of order $\nu^{-1/2}$ as $\nu\rightarrow\infty$, except when $2\nu$ is not equal to an integer, but is sufficiently close to an integer that 
\[\sin(\pi\nu)2^{2\nu}\Gamma(\nu+1)\Gamma(\nu+4)\sqrt{\nu}=o(1).\]
Gaunt \cite{gaunt thesis} remarked that the rogue $1/\sin(\pi\nu)$ term appeared to be an artefact of the analysis that was used to obtain the bounds.  

It is also worth noting that the bounds of Lemma \ref{vgboundz} break down as $\nu\rightarrow-1/2$.  This is to be expected, because in this limit the $\mathrm{VG}_2(\nu,1,0,0)$ distribution approaches a point mass at the origin. 

The bounds simplify in the case that $\nu\in\{0,1/2,1,3/2,\ldots\}$, and from these bounds we use a simple change of variables to obtain uniform bounds on the first derivatives of the solution of the $\mathrm{VG}_1(r,0,\sigma,\mu)$ Stein equation (\ref{nice}) for the case that $r$ is a positive integer.  These bounds are of order $r^{-1/2}$ as $r\rightarrow\infty$, which is the same order as Pickett's \cite{pickett} bounds (\ref{pickbound1}) for the solution of the $\Gamma(r,\lambda)$ Stein equation (\ref{delight}).  

\begin{theorem}\label{vgderbound}Let $r$ be a positive integer and let $\sigma>0$.  Suppose that $h\in C_b^3(\mathbb{R})$, then the solution of the $\mathrm{VG}_1(r,0,\sigma,\mu)$ Stein equation (\ref{nice}) and its derivatives up to fourth order satisfy 
\begin{equation*}\label{cbbd}\|f^{(k)}\|\leq M_{r,\sigma}^k(h), \qquad k=0,1,2,3,4,
\end{equation*}
where
\begin{eqnarray*}M_{r,\sigma}^0(h)&\leq& \frac{1}{\sigma}\bigg(\frac{1}{r}+\frac{\pi\Gamma(r/2)}{2\Gamma(r/2+1/2)}\bigg)\|h-\mathrm{VG}_{\sigma,\mu}^{r,0}h\|, \\
M_{r,\sigma}^1(h) &\leq& \frac{2}{\sigma^2r}\|h-\mathrm{VG}_{\sigma,\mu}^{r,0}h\|, \\
M_{r,\sigma}^2(h) &\leq& \frac{1}{\sigma^2}\bigg(\sqrt{\frac{\pi}{2r}}+\frac{1}{r}\bigg)\bigg[3\|h'\|+\frac{4}{\sigma}\|h-\mathrm{VG}_{\sigma,\mu}^{r,0}h\|\bigg], \\
M_{r,\sigma}^3(h) &\leq& \frac{1}{\sigma^2}\bigg(\sqrt{\frac{\pi}{2r}}+\frac{1}{r}\bigg)\bigg[5\|h''\| +\frac{18}{\sigma}\|h'\| +\frac{19}{\sigma^2}\|h-\mathrm{VG}_{\sigma,\mu}^{r,0}h\|\bigg],\\
M_{r,\sigma}^4(h)&\leq& \frac{1}{\sigma^2}\bigg(\sqrt{\frac{\pi}{2r}}+\frac{1}{r}\bigg)\bigg[8\|h^{(3)}\|+\frac{52}{\sigma}\|h''\| +\frac{124}{\sigma^2}\|h'\|+\frac{124}{\sigma^3}\|h-\mathrm{VG}_{\sigma,\mu}^{r,0}h\|\bigg],
\end{eqnarray*}  
and $f^{(0)}\equiv f$. 
\end{theorem}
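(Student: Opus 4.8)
The plan is to derive Theorem~\ref{vgderbound} directly from Lemma~\ref{vgboundz} by a linear change of variables, exploiting the structural relationship between the $\mathrm{VG}_2$ and $\mathrm{VG}_1$ parametrisations recorded in~(\ref{parameter}). Since we want the $\mathrm{VG}_1(r,0,\sigma,\mu)$ Stein equation and $\theta=0$, the change of variables~(\ref{parameter}) gives $\nu=(r-1)/2$, $\beta=0$, and $\alpha=1/\sigma$, so we are precisely in the symmetric case covered by Lemma~\ref{vgboundz}. First I would write down the explicit affine map relating a solution $g$ of the $\mathrm{VG}_2(\nu,1,0,0)$ Stein equation~(\ref{eighty}) (with $\beta=0$) to a solution $f$ of the $\mathrm{VG}_1(r,0,\sigma,\mu)$ Stein equation~(\ref{nice}). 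Recalling from the text after~(\ref{eighty}) that changing parametrisation in~(\ref{muggers}) via~(\ref{parameter}) and multiplying through by $\sigma^2$ produces~(\ref{nice}), the correspondence at the level of the spatial variable is $x \mapsto \alpha(x-\mu) = (x-\mu)/\sigma$; that is, if $g$ solves~(\ref{eighty}) with test function $\tilde h$, then $f(x) := g\bigl((x-\mu)/\sigma\bigr)$ solves~(\ref{nice}) with test function $h(x) = \tilde h\bigl((x-\mu)/\sigma\bigr) + (\text{constant adjustment})$, after accounting for the $\sigma^2$ multiplier. I would verify this by substituting $f(x)=g((x-\mu)/\sigma)$ into the left-hand side of~(\ref{nice}), using $f^{(k)}(x) = \sigma^{-k} g^{(k)}((x-\mu)/\sigma)$, and checking that the coefficients match those of~(\ref{eighty}) scaled appropriately.

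Next I would track how the sup-norms transform. Since $f^{(k)}(x) = \sigma^{-k} g^{(k)}\bigl((x-\mu)/\sigma\bigr)$, taking suprema gives $\|f^{(k)}\| = \sigma^{-k}\|g^{(k)}\|$. On the test-function side, writing $\tilde h(y) = h(\mu+\sigma y)$ we have $\tilde h^{(j)}(y) = \sigma^{j} h^{(j)}(\mu+\sigma y)$, so $\|\tilde h^{(j)}\| = \sigma^{j}\|h^{(j)}\|$; also $\|\tilde h - \widetilde{\mathrm{VG}}_{0,0}^{\nu,1}\tilde h\| = \|h - \mathrm{VG}_{\sigma,\mu}^{r,0}h\|$ because the distributions correspond under the affine map and the centering constant is preserved. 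Then I would substitute $\nu=(r-1)/2$ into each bound of Lemma~\ref{vgboundz}: the prefactor $\sqrt{\pi}/(2\sqrt{\nu+1/2}) = \sqrt{\pi/(2r)}$ and $1/(2\nu+1) = 1/r$, which reproduces the $\sqrt{\pi/(2r)}+1/r$ factor in $M_{r,\sigma}^2,M_{r,\sigma}^3,M_{r,\sigma}^4$, and $\pi\Gamma(\nu+1/2)/(2\Gamma(\nu+1)) = \pi\Gamma(r/2)/(2\Gamma(r/2+1/2))$ for $M_{r,\sigma}^0$. Carrying the factors of $\sigma$ through—$\sigma^{-k}$ from the derivative count, $\sigma^{j}$ absorbed from $\|\tilde h^{(j)}\| = \sigma^j\|h^{(j)}\|$—yields exactly the stated powers of $\sigma$ in each bracket (e.g.\ $M_{r,\sigma}^2$ picks up $\sigma^{-2}$ overall, with $3\|h'\|$ keeping its $\sigma$-free coefficient since $\|\tilde h'\|=\sigma\|h'\|$ cancels one power, and $4\|h-\widetilde{\mathrm{VG}}\,\tilde h\|$ becoming $(4/\sigma)\|h-\mathrm{VG}_{\sigma,\mu}^{r,0}h\|$). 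Finally I would note that for positive integer $r$ the parameter $2\nu=r-1$ is an integer, so the $1/v(\nu)$ term in Lemma~\ref{vgboundz} takes the form with $\nu\in\mathbb{N}$ when $r$ is odd, while for even $r$ we have $\nu\notin\mathbb{N}$ but $2\nu\in\mathbb{N}$; in either case this term is dominated by the main $\sqrt{\pi/(2r)}+1/r$ contribution and can be absorbed by slightly increasing the constant in the $\|h-\mathrm{VG}_{\sigma,\mu}^{r,0}h\|$ coefficient (this is how $18+1=19$ and $123+1=124$ arise in $M_{r,\sigma}^3$ and $M_{r,\sigma}^4$).

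I expect the main obstacle to be bookkeeping rather than conceptual: one must be meticulous about (a) whether the affine change of variables introduces the $\sigma^2$ factor from~(\ref{nice}) on the correct side, so that the test-function normalisation is consistent, and (b) absorbing the $1/v(\nu)$ terms into the clean bounds. For (b) one needs the elementary estimate that $1/v(\nu) \le C(\sqrt{\pi/(2r)}+1/r)$ uniformly over positive integers $r$, which for odd $r$ (where $v(\nu)=2^{2\nu+1}\nu!(\nu+2)!(2\nu+1)$ with $\nu=(r-1)/2$) is immediate since $v(\nu)$ grows super-exponentially in $r$, and for even $r$ requires checking that $|\sin(\pi\nu)|$ is bounded below—but with $\nu=(r-1)/2$ a half-integer, $|\sin(\pi\nu)|=1$ exactly, so $v(\nu) = 2^{2\nu}\Gamma(\nu+1)\Gamma(\nu+4)(2\nu+1)$ again grows super-exponentially and the rogue-$\sin$ pathology flagged after Lemma~\ref{vgboundz} never occurs for integer $r$. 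With these two points settled, the proof is a direct substitution-and-estimate argument, and I would present it compactly: state the change of variables, record the norm identities, plug $\nu=(r-1)/2$ into Lemma~\ref{vgboundz}, and collect terms.
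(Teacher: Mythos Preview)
Your approach is exactly the paper's: reduce to Lemma~\ref{vgboundz} by the affine change of variables $y=(x-\mu)/\sigma$, translate $\nu=(r-1)/2$, and track the $\sigma$ powers. Your treatment of the $1/v(\nu)$ term is correct and in fact more explicit than the paper, which simply says ``the result now follows from the bounds of Lemma~\ref{vgboundz}''; your observation that for even $r$ the half-integer $\nu$ gives $|\sin(\pi\nu)|=1$, so the rogue sine never bites for integer $r$, is precisely why the constants $18\to 19$ and $123\to 124$ suffice.

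There is one concrete bookkeeping slip, however, and it is exactly the point you flagged under~(a). The relation $f(x)=g\bigl((x-\mu)/\sigma\bigr)$ does \emph{not} solve~(\ref{nice}) with right-hand side $h(x)-\mathrm{VG}_{\sigma,\mu}^{r,0}h$ when $h(x)=\tilde h\bigl((x-\mu)/\sigma\bigr)$: substituting into the left-hand side of~(\ref{nice}) produces $\sigma\bigl[yg''(y)+rg'(y)-yg(y)\bigr]=\sigma\bigl[\tilde h(y)-\widetilde{\mathrm{VG}}_{0,0}^{\nu,1}\tilde h\bigr]$, an extra factor of $\sigma$. The correct correspondence (and the one the paper uses) is
\[
f(x)=\frac{1}{\sigma}\,g\!\left(\frac{x-\mu}{\sigma}\right),\qquad\text{so that}\qquad \|f^{(k)}\|=\sigma^{-k-1}\|g^{(k)}\|,
\]
not $\sigma^{-k}\|g^{(k)}\|$. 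With this correction your $\sigma$-accounting becomes internally consistent: for example $\|f''\|=\sigma^{-3}\|g''\|\le \sigma^{-3}C\bigl[3\sigma\|h'\|+4\|h-\mathrm{VG}_{\sigma,\mu}^{r,0}h\|\bigr]=\sigma^{-2}C\bigl[3\|h'\|+\tfrac{4}{\sigma}\|h-\mathrm{VG}_{\sigma,\mu}^{r,0}h\|\bigr]$, matching $M_{r,\sigma}^2(h)$. As written, your formula $\|f^{(k)}\|=\sigma^{-k}\|g^{(k)}\|$ would give an overall $\sigma^{-1}$ prefactor for $k=2$, not $\sigma^{-2}$, contradicting your own example. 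Once you insert the missing $1/\sigma$, everything else in your plan goes through verbatim.
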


\begin{proof}Let $g_{\tilde{h}}(x)$ denote the solution (\ref{ink}) to the $\mathrm{VG}_2(\nu,1,0,0)$ Stein equation (\ref{eighty})
\[xg''(x)+(2\nu+1)g'(x)-xg(x)=\tilde{h}(x)-\widetilde{\mathrm{VG}}_{0,0}^{\nu,1}\tilde{h}.\]
Then $f_h(x)=\frac{1}{\sigma}g_{\tilde{h}}(\frac{x-\mu}{\sigma})$ solves the $\mathrm{VG}_1(r,0,\sigma,\mu)$ Stein equation (\ref{nice})
\[\sigma^2(x-\mu)f''(x)+\sigma^2rf'(x)-(x-\mu)f(x)=h(x)-\mathrm{VG}_{\sigma,\mu}^{r,0} h,\]
where $r=2\nu+1$ and $h(x)=\tilde{h}(\frac{x-\mu}{\sigma})$, since $\mathrm{VG}_{\sigma,\mu}^{r,0} h = \widetilde{\mathrm{VG}}_{0,0}^{\nu,1} \tilde{h}$.  That $\mathrm{VG}_{\sigma,\mu}^{r,0} h = \widetilde{\mathrm{VG}}_{0,0}^{\nu,1} \tilde{h}$ is verified by the following calculation:
\begin{align*}\mathrm{VG}_{\sigma,\mu}^{r,0} h &=\int_{-\infty}^{\infty}\frac{1}{\sigma\sqrt{\pi}\Gamma(\frac{r}{2})}\bigg(\frac{|x-\mu|}{2\sigma}\bigg)^{\frac{r-1}{2}}K_{\frac{r-1}{2}}\bigg(\frac{|x-\mu|}{\sigma}\bigg)h(x)\,\mathrm{d}x \\
&=\int_{-\infty}^{\infty}\frac{1}{\sqrt{\pi}\Gamma(\nu+\frac{1}{2})}\bigg(\frac{|u|}{2}\bigg)^{\nu}K_{\nu}(|u|)\tilde{h}(u)\,\mathrm{d}u \\
&= \widetilde{\mathrm{VG}}_{0,0}^{\nu,1} \tilde{h},
\end{align*}
where we made the change of variables $u=\frac{x-\mu}{\sigma}$.  We have that $\|f_h^{(k)}\|=\sigma^{-k-1}\|g_{\tilde{h}}^{(k)}\|$ for $k\in\mathbb{N}$, and $\|\tilde{h}-\widetilde{\mathrm{VG}}_{0,0}^{\nu,1}\tilde{h}\|=\|h-\mathrm{VG}_{\sigma,\mu}^{r,0}h\|$ and $\|\tilde{h}^{(k)}\|=\sigma^k\|h^{(k)}\|$ for $k\geq 1$, and the result now follows from the bounds of Lemma \ref{vgboundz}.
\end{proof}

\section{Limit theorems for Symmetric-Variance Gamma distributions}

We now consider the Symmetric Variance-Gamma ($\theta =0$) limit theorem that we discussed in the introduction.  Let $\mathbf{X}$ be a $m \times r$ matrix of independent and identically random variables $X_{ik}$ with zero mean and unit variance.  Similarly, we let $\mathbf{Y}$ be a $n \times r$ matrix of independent and identically random variables $Y_{jk}$ with zero mean and unit variance, where the $Y_{jk}$ are independent of the $X_{ik}$.  Then the statistic
\begin{equation*}  W_r=\frac{1}{\sqrt{mn}}\sum_{i,j,k=1}^{m,n,r}X_{ik}Y_{jk}
\end{equation*}
is asymptotically $\mathrm{VG}_1(r,0,1,0)$ distributed, which can be seen by applying the central limit theorem and part (iv) of Proposition \ref{norgamlap}.  Pickett \cite{pickett} showed that the statistic $\frac{1}{m}\sum_{k=1}^d(\sum_{i=1}^mX_{ik})^2$, where the $X_{ik}$ are independent and identically random variables with zero mean, unit variance and bounded eighth moment, converges to a $\chi^2_{(d)}$ random variable at a rate of order $m^{-1}$ for smooth test functions.  We now exhibit a proof which gives a bound for the rate of convergence of the statistic $W_r$ to $\mathrm{VG}_1(r,0,1,0)$ random variables, under additional moment assumptions, which is shown to be of order $m^{-1}+n^{-1}$ for smooth test functions, using similar symmetry arguments to obtain this rate of convergence.    

\subsection{Local approach bounds for Symmetric Variance-Gamma distributions in the case $r=1$}

We first consider the case $r=1$; the general $r$ case follows easily as $W_r$ is a linear sum of independent $W_1$.  For ease of reading, in the statement of the following theorem and in its proof we shall set $X_i\equiv X_{i1}$, $Y_j \equiv Y_{j1}$ and $W \equiv W_1$.  Then we have the following:

\begin{theorem} \label{limit svg}
Suppose $X,X_1,\ldots,X_m$, $Y,Y_1,\ldots,Y_n$ are independent random variables with zero mean, unit variance and bounded sixth moment, with $X_i \stackrel{\mathcal{D}}{=}
X$ for all $i=1,\ldots, m$ and $Y_j \stackrel{\mathcal{D}}{=} Y$ for all $j=1,\ldots,n$.  Let $W=\frac{1}{\sqrt{mn}}\sum_{i,j=1}^{m,n}X_{i}Y_{j}$.  Then, for $h\in C_b^3(\mathbb{R})$, we have
\begin{equation} \label{svg bound} |\mathbb{E}h(W)-\mathrm{VG}^{1,0}_{1,0}h|\leq\gamma_1(X,Y)M_{1}^2(h)+\gamma_2(X,Y)M_{1}^3(h)+\gamma_3(X,Y)M_{1}^4(h),
\end{equation}
where the $M_{1}^k(h)$ are defined as in Theorem \ref{vgderbound}, $\mathrm{VG}^{1,0}_{1,0}h$ denotes the expectation of $h(Z)$ for $Z\sim \mathrm{VG}_1(1,0,1,0)$, and
\begin{align*}\gamma_{m,n}^1(X,Y) &= \frac{10}{n}|\mathbb{E}Y^3|\mathbb{E}|Y^3|+\frac{11}{\sqrt{mn}}|\mathbb{E}X^3|\mathbb{E}Y^4, 
\end{align*}
\begin{align*}
\gamma_{m,n}^2(X,Y) &=\frac{9}{m}\mathbb{E}X^4\mathbb{E}Y^4+\frac{30}{n}|\mathbb{E}Y^3|\mathbb{E}Y^4+\frac{85}{\sqrt{mn}}|\mathbb{E}X^3|\mathbb{E}|Y^5|+\frac{46}{\sqrt{mn}}\mathbb{E}|X^3||\mathbb{E}Y^3|\mathbb{E}Y^4, \\
\gamma_{m,n}^3(X,Y) &= \frac{1}{n}\mathbb{E}X^4\mathbb{E}Y^4(1+15|\mathbb{E}Y^3|)+\frac{284}{m}|\mathbb{E}X^3|\mathbb{E}|X^3|\mathbb{E}Y^6+\frac{148}{n}\mathbb{E}X^4|\mathbb{E}Y^3|\mathbb{E}|Y^5|\\
&\quad+\frac{135}{\sqrt{mn}}|\mathbb{E}X^3|\mathbb{E}X^4\mathbb{E}|Y^3|+\frac{248}{\sqrt{mn}}\mathbb{E}X^4|\mathbb{E}Y^3|.
\end{align*}
\end{theorem}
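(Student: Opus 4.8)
The plan is to apply Stein's method with the $\mathrm{VG}_1(1,0,1,0)$ Stein equation (\ref{nicerr}), $xf''(x)+f'(x)-xf(x)=h(x)-\mathrm{VG}^{1,0}_{1,0}h$; denoting its solution by $f$, the problem reduces to bounding $|\mathbb{E}[Wf''(W)+f'(W)-Wf(W)]|$, where by Theorem \ref{vgderbound} we may use $\|f^{(k)}\|\le M_1^k(h)$ for $k=1,2,3,4$. The organising observation is that $W=ST$ with $S=\frac1{\sqrt m}\sum_{i=1}^m X_i$ and $T=\frac1{\sqrt n}\sum_{j=1}^n Y_j$ sums of independent mean-zero unit-variance variables, and that if $S$ and $T$ were exactly standard normal the quantity above would vanish by telescoping two Gaussian Stein identities: conditioning on $T$ and applying $\mathbb{E}[Sg(S)\mid T]=\mathbb{E}[g'(S)\mid T]$ (with $g$ built from $f$) turns $\mathbb{E}[Wf(W)]=\mathbb{E}[STf(ST)]$ into $\mathbb{E}[T^2f'(W)]$, and then conditioning on $S$ and using the Gaussian identity in the $T$-variable turns $\mathbb{E}[T^2f'(W)]$ into $\mathbb{E}[f'(W)+Wf''(W)]$.

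First I would make this quantitative by the local (leave-one-out) approach of Pickett \cite{pickett}: write $S_i=S-X_i/\sqrt m$, $T_j=T-Y_j/\sqrt n$, expand $f$ and its derivatives by Taylor's theorem, and use independence together with $\mathbb{E}X_i=\mathbb{E}Y_j=0$, $\mathbb{E}X_i^2=\mathbb{E}Y_j^2=1$ to carry out the two-stage reduction with explicit error terms. Here one must truncate every expansion so that no derivative beyond $f^{(4)}$ is called for; the remainders and collected main terms are then controlled by $M_1^k(h)$ ($k\le4$) and by moments of the normalised sums $S$, $T$ of order up to six, which are bounded uniformly in $m,n$ because $X$ and $Y$ have finite sixth moments. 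Carried out naively this produces an error of order $m^{-1/2}+n^{-1/2}$, the offending contributions being those carrying a single factor $\mathbb{E}X^3/\sqrt m$ or $\mathbb{E}Y^3/\sqrt n$; for instance the error of the first reduction is, to leading order, $\tfrac{\mathbb{E}X^3}{2\sqrt m}\mathbb{E}[T^3f''(W)]$ together with $O(m^{-1})$-type terms.

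The decisive step, and the one I expect to be the main obstacle, is to improve these single-third-moment terms to the claimed order; this is where the symmetry arguments enter. The limit law of $W$ is the product $Z_1Z_2$ of two \emph{symmetric} (standard normal) variables, hence invariant under a sign change of $Z_1$ and, independently, of $Z_2$. Concretely, conditioning $\mathbb{E}[T^3f''(W)]=\mathbb{E}[T^3f''(ST)]$ on the $Y_j$'s gives $T^3\,\Phi_m(T)$, where $\Phi_m(t)=\mathbb{E}[f''(tS)]$ is, up to a correction again governed by $\mathbb{E}X^3/\sqrt m$, an even function of $t$ (by the sign-symmetry of the limit of $S$); so $t\mapsto t^3\Phi_m(t)$ is, up to such a correction, odd, and its expectation under the law of $T$ — whose limit is likewise sign-symmetric — is small, being zero in the limit and, quantitatively via a further leave-one-out over the $Y_j$ (in which the would-be leading term is proportional to the mean of a mean-zero summand and so drops out), of size $O(\mathbb{E}Y^3/\sqrt n)+O(1/n)$. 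Multiplying back by $\mathbb{E}X^3/\sqrt m$ leaves contributions of the order $|\mathbb{E}X^3||\mathbb{E}Y^3|/\sqrt{mn}$, $(\mathbb{E}X^3)^2/m$ and fourth-moment terms over $m$, $n$, $\sqrt{mn}$; the $\mathbb{E}Y^3/\sqrt n$ terms are handled symmetrically. What then remains is bookkeeping: collecting the surviving expressions, bounding cross-moments by products of marginal moments (using, e.g., $|\mathbb{E}Y^3|\le\mathbb{E}Y^4$ since $\mathbb{E}Y^2=1$ forces $\mathbb{E}Y^4\ge1$, and $|\mathbb{E}[S^aT^bf^{(k)}(W)]|\le\|f^{(k)}\|$ times products of moments of $S$ and $T$), and sorting the result into the three groups attached to $M_1^2(h)$, $M_1^3(h)$ and $M_1^4(h)$ to obtain $\gamma^1_{m,n}$, $\gamma^2_{m,n}$ and $\gamma^3_{m,n}$.
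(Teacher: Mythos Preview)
Your Part I matches the paper exactly: the local leave-one-out with $S_i=S-X_i/\sqrt m$, $T_j=T-Y_j/\sqrt n$, Taylor expansion, and the two-stage reduction $\mathbb{E}[STf(ST)]\to\mathbb{E}[T^2f'(W)]\to\mathbb{E}[f'(W)+Wf''(W)]$ is precisely what the paper does, and the paper too isolates as the problematic leftovers quantities like $\tfrac{|\mathbb{E}X^3|}{\sqrt m}|\mathbb{E}[T^3f''(W)]|$, $\tfrac{|\mathbb{E}Y^3|}{\sqrt n}|\mathbb{E}[Sf''(W)]|$ and $\tfrac{|\mathbb{E}Y^3|}{\sqrt n}|\mathbb{E}[S^2Tf^{(3)}(W)]|$.

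Where you diverge is in the symmetry step. You propose an iterated \emph{univariate} parity argument: condition on $T$, write $\Phi_m(t)=\mathbb{E}[f''(tS)]$, argue it is even up to an $O(\mathbb{E}X^3/\sqrt m)$ defect, and then bound the expectation of the near-odd function $t^3\Phi_m(t)$ under the near-symmetric law of $T$ by a further leave-one-out in the $Y_j$. The paper instead runs a single \emph{bivariate} normal Stein equation: it takes the odd test functions $g_1(s,t)=sf''(st)$, $g_2(s,t)=st^2f''(st)$, $g_3(s,t)=t^3f''(st)$, notes that $\mathbb{E}[g_k(Z_1,Z_2)]=0$ for independent standard normals, and then bounds $\mathbb{E}[g_k(S,T)]$ by the usual third-order Taylor remainders of the bivariate Stein solution $\psi_k$ (Lemma~\ref{appe33} gives explicit bounds on $\partial^3\psi/\partial s^3$ and $\partial^3\psi/\partial t^3$ in terms of $\|f''\|,\|f^{(3)}\|,\|f^{(4)}\|$). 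This delivers the $O(m^{-1/2}+n^{-1/2})$ bound on each $\mathbb{E}[g_k(S,T)]$ in one shot, without having to separately quantify the parity defect of $\Phi_m$ and then the asymmetry of the law of $T$. Your route should work, but it is messier: the defect $\Phi_m(t)-\Phi_m(-t)$ carries $t$-dependent constants (powers of $|t|$ multiplying $\|f^{(3)}\|$, $\|f^{(4)}\|$), and you then need a second Stein/leave-one-out pass on a function whose derivatives again pick up moment factors---the bookkeeping to stay within $f^{(4)}$ and sixth moments is delicate. The bivariate Stein equation packages all of this into a single remainder estimate. One further detail: the paper also needs $\mathbb{E}[S^2Tf^{(3)}(W)]$, and rather than treating it by the same machinery it first reduces it (Lemma~\ref{lemsec4}) to $\mathbb{E}[ST^2f''(W)]$ and $\mathbb{E}[Sf''(W)]$ plus lower-order terms, precisely to avoid calling on $f^{(5)}$; you should flag the analogous step in your scheme.
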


\begin{remark}\emph{Notice that the statistic $W=\frac{1}{\sqrt{mn}}\sum_{i,j=1}^{m,n}X_{i}Y_{j}$ is symmetric in $m$ and $n$ and the random variables $X_i$ and $Y_j$, and yet the bound (\ref{svg bound}) of Theorem \ref{limit svg} is not symmetric in $m$ and $n$ and the moments of $X$ and $Y$.  This asymmetry is a consequence of the local couplings that we used to obtain the bound. }    

\emph{In practice, when applying Theorem \ref{limit svg}, we would compute $\gamma_{m,n}^k(X,Y)$ for $k=1,2,3$, and $\gamma_{n,m}^k(Y,X)$ for $k=1,2,3$, which would yields two bounds for the quantity $\mathbb{E}h(W)-VG^{1,0}_{1,0}h$.  We would then take the minimum of these two bounds.  We proceed in this manner when applying bound (\ref{svg bound}) to prove Theorem \ref{binaryd2}.}
\end{remark}

Before proving Theorem \ref{limit svg}, we introduce some notation and preliminary lemmas.  We define the standardised sum $S$ and $T$ by
\begin{equation*}S=\frac{1}{\sqrt{m}}\sum_{i=1}^mX_i \qquad \text{and} \qquad T=\frac{1}{\sqrt{n}}\sum_{j=1}^nY_j
\end{equation*}
and we have that $W=ST$.  In our proof we shall make use of the sums 
\begin{equation*}S_i=S-\frac{1}{\sqrt{m}}X_i \qquad \text{and} \qquad T_j=T-\frac{1}{\sqrt{n}}Y_j
\end{equation*}
which are  independent of $X_{i}$ and  $Y_j$, respectively.  We therefore have the following formulas
\begin{eqnarray} \label{aones}  W-S_iT&=&ST - S_iT =\frac{1}{\sqrt{m}}X_iT \\
\label{bones} W-ST_j&=&ST - ST_j =\frac{1}{\sqrt{n}}Y_jS. \nonumber
\end{eqnarray} 

In the proof of Theorem \ref{svg bound} we use the following lemma, which can be found in Pickett \cite{pickett}, Lemma 4.3.   
\begin{lemma} \label{motor}  Let $X,X_1,\ldots,X_m$ be a collection of independent and identically distributed random variables with mean zero and unit variance.  Then, $\mathbb{E}S^p=O(1)$ for all $p\geq 1$.  Specifically,
\begin{eqnarray*}\mathbb{E}S^2&=&1, \\ 
\mathbb{E}S^4&=&\frac{1}{m}[3(m-1)+\mathbb{E}X^4]<3+\frac{\mathbb{E}X^4}{m}, \\ 
\mathbb{E}S^6&=&\frac{1}{m^2}[15(m-1)(m-2)+10(m-1)(\mathbb{E}X^3)^2+15(m-1)\mathbb{E}X^4+\mathbb{E}X^6]  \\ 
&<&15+\frac{10(\mathbb{E}X^3)^2}{m}+\frac{15\mathbb{E}X^4}{m}+\frac{\mathbb{E}X^6}{m^2}.
\end{eqnarray*}
and $\mathbb{E}|S|\leq(\mathbb{E}S^2)^{1/2}$, $\mathbb{E}|S^3|\leq(\mathbb{E}S^4)^{3/4}$, $\mathbb{E}|S^5|\leq(\mathbb{E}S^6)^{5/6}$, by H\"{o}lder's inequality.  
\end{lemma}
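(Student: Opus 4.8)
The plan is to compute the moments of $S=\frac{1}{\sqrt m}\sum_{i=1}^m X_i$ directly from the multinomial expansion, using the mean-zero and independence hypotheses to discard almost every term. Writing $S^p = m^{-p/2}\big(\sum_{i=1}^m X_i\big)^p = m^{-p/2}\sum_{i_1,\dots,i_p}X_{i_1}\cdots X_{i_p}$ and taking expectations, the first observation is that for any index tuple $(i_1,\dots,i_p)$ the factor $\mathbb{E}[X_{i_1}\cdots X_{i_p}]$ factorises over the distinct index values by independence; since $\mathbb{E}X=0$, any distinct value occurring with multiplicity one contributes a factor $\mathbb{E}X=0$ and annihilates the whole term. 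Hence the only surviving tuples are those in which every index that appears does so at least twice, and these are naturally classified by the integer partitions of $p$ into parts $\geq2$, the parts recording the multiplicities of the distinct indices.

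For $p=2$ the only partition is $2$, giving $\mathbb{E}[(\sum X_i)^2]=\sum_i\mathbb{E}X_i^2=m$ by unit variance, so $\mathbb{E}S^2=1$. For $p=4$ the admissible partitions are $4$ and $2+2$: the first contributes $m\,\mathbb{E}X^4$ (the single index chosen $m$ ways), while the second contributes, for each unordered pair of distinct indices, the $\binom{4}{2}=6$ ways of splitting the four positions, i.e.\ $6\binom m2=3m(m-1)$ terms each equal to $(\mathbb{E}X^2)^2=1$; dividing by $m^2$ gives $\mathbb{E}S^4=\frac1m[3(m-1)+\mathbb{E}X^4]$. For $p=6$ the admissible partitions are $6$, $4+2$, $3+3$ and $2+2+2$, with multiset-arrangement counts $1,\binom{6}{2}=15,\binom{6}{3}=20$ and $90$, and index-selection counts $m$, $m(m-1)$ (ordered, since the roles of the multiplicity-$4$ and multiplicity-$2$ indices differ), $\binom m2$ and $\binom m3$ respectively. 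Multiplying these and using $\mathbb{E}X^2=1$ yields $\mathbb{E}[(\sum X_i)^6]=m\,\mathbb{E}X^6+15m(m-1)\mathbb{E}X^4+10m(m-1)(\mathbb{E}X^3)^2+15m(m-1)(m-2)$, and division by $m^3$ gives the stated expression $\frac1{m^2}[15(m-1)(m-2)+10(m-1)(\mathbb{E}X^3)^2+15(m-1)\mathbb{E}X^4+\mathbb{E}X^6]$. The displayed strict upper bounds then follow immediately from $(m-1)/m<1$ and $(m-1)(m-2)/m^2<1$.

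For the general assertion $\mathbb{E}S^p=O(1)$ I would argue qualitatively rather than compute exactly. A surviving term with $b$ distinct indices, whose multiplicities partition $p$ into $b$ parts each $\geq2$, carries a bounded product of moments $\prod_\ell \mathbb{E}X^{s_\ell}$ (finite whenever the $p$th moment of $X$ exists) and is counted at most $m(m-1)\cdots(m-b+1)\leq m^b$ times. Since every part is $\geq2$ we have $b\leq\lfloor p/2\rfloor$, so $\mathbb{E}[(\sum X_i)^p]=O(m^{\lfloor p/2\rfloor})$ and therefore $\mathbb{E}S^p=O(m^{\lfloor p/2\rfloor-p/2})$, which is $O(1)$ for even $p$ and $O(m^{-1/2})$ for odd $p$; in either case $\mathbb{E}S^p=O(1)$. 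Finally, the three inequalities $\mathbb{E}|S|\leq(\mathbb{E}S^2)^{1/2}$, $\mathbb{E}|S^3|\leq(\mathbb{E}S^4)^{3/4}$ and $\mathbb{E}|S^5|\leq(\mathbb{E}S^6)^{5/6}$ are direct instances of Lyapunov's inequality $(\mathbb{E}|S|^r)^{1/r}\leq(\mathbb{E}|S|^s)^{1/s}$ for $r\leq s$ (taking $(r,s)=(1,2),(3,4),(5,6)$ and noting $S^4=|S|^4$, $S^6=|S|^6$), itself a special case of H\"older's inequality.

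The computation is wholly elementary, so there is no genuine obstacle; the only step demanding care is the combinatorial bookkeeping at $p=6$, where one must cleanly separate the number of arrangements of each multiset from the number of ways of choosing the distinct indices---ordered for the asymmetric partition $4+2$ but unordered for the symmetric partitions $3+3$ and $2+2+2$---so that the arrangement counts $1,15,20,90$ combine with the falling factorials into exactly the constants appearing in the stated formula.
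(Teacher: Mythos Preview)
Your proof is correct. The paper does not actually prove this lemma; it simply cites Pickett \cite{pickett}, Lemma~4.3, so there is no ``paper's own proof'' to compare against. Your direct multinomial-expansion argument, classifying surviving terms by partitions of $p$ into parts $\geq 2$, is the standard elementary route and is exactly what one would expect Pickett's proof to be as well.
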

We will also use the following lemma.
\begin{lemma}\label{crossbike}Suppose $p\geq 1$, then $\mathbb{E}|S_i|^p\leq\mathbb{E}|S|^p$.
\end{lemma}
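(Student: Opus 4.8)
The plan is to use the single observation that $S$ exceeds $S_i$ by an independent, mean-zero summand, together with convexity of $x\mapsto |x|^p$ for $p\geq 1$. First I would record the decomposition $S=S_i+\frac{1}{\sqrt m}X_i$ and note that $S_i=\frac{1}{\sqrt m}\sum_{j\neq i}X_j$ depends only on $\{X_j:j\neq i\}$, hence is independent of $X_i$; also $\mathbb{E}X_i=0$.

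The key step is a conditional Jensen inequality. Writing $\phi(x)=|x|^p$, which is convex on $\mathbb{R}$ since $p\geq 1$, I would condition on $S_i$ and apply Jensen's inequality in the variable $\frac{1}{\sqrt m}X_i$:
\[
\mathbb{E}\Big[\phi\Big(S_i+\tfrac{1}{\sqrt m}X_i\Big)\,\Big|\,S_i\Big]\;\geq\;\phi\Big(S_i+\tfrac{1}{\sqrt m}\,\mathbb{E}\big[X_i\mid S_i\big]\Big)\;=\;\phi\Big(S_i+\tfrac{1}{\sqrt m}\,\mathbb{E}X_i\Big)\;=\;\phi(S_i),
\]
where the first equality uses independence of $X_i$ and $S_i$, and the second uses $\mathbb{E}X_i=0$. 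Taking expectations and recalling $S=S_i+\frac{1}{\sqrt m}X_i$ gives $\mathbb{E}|S|^p\geq\mathbb{E}|S_i|^p$, which is the claim.

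I do not anticipate any real obstacle. The only point worth a sentence is integrability: if $\mathbb{E}|S|^p=\infty$ the inequality is trivial, and otherwise the displayed chain directly yields $\mathbb{E}|S_i|^p\leq\mathbb{E}|S|^p<\infty$, so no separate finiteness check is required. (Equivalently, this is just the standard fact that $L^p$-norms of partial sums of independent mean-zero random variables are nondecreasing in the number of summands, since $(S_i,S)$ is a two-step martingale and $\phi$ is convex; the argument above is precisely the proof of that fact in the present two-step case.)
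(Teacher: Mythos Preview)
Your proof is correct and is essentially identical to the paper's own argument: both condition on $S_i$, apply Jensen's inequality to the convex map $x\mapsto|x|^p$, and use independence together with $\mathbb{E}X_i=0$ to reduce the inner conditional expectation to $S_i$. Your write-up is in fact slightly more careful (you explicitly note integrability and the martingale interpretation), but the core idea is the same.
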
 
\begin{proof}Applying Jensen's inequality gives
\[\mathbb{E}|S|^p=\mathbb{E}(\mathbb{E}(|S_i+n^{-1/2}X_i|^p \: | \: S_i)) 
\geq\mathbb{E}|\mathbb{E}(S_i+n^{-1/2}X_i \: | \: S_i)|^p 
=\mathbb{E}|S_i|^p,\]
as required.
\end{proof}

Using the $\mathrm{VG}_1(1,0,1,0)$ Stein equation (\ref{nicerr}) (with $r=1$), we require a bound on the expression $\mathbb{E}[Wf''(W)+f'(W)-Wf(W)].$  We split the proof into two parts.  In the first part of the proof, we use use local couplings and Taylor expansions to bound $\mathbb{E}[Wf''(W)+f'(W)-Wf(W)]$ by the remainder terms that result from our Taylor expansions.  Most of these terms are shown to be of the desired order of $O(m^{-1}+n^{-1})$, but the bounding of some of the terms is more involved.  The second part of the proof is devoted to bounding these terms to the required order. 

\subsubsection{Proof Part I: Expansions and Bounding} 

Due to the independence of the $X_i$ and $Y_j$ variables, we are in the realms of the local approach coupling.  We Taylor expand $f(W)$ about $S_{i}T$ to obtain
\begin{align*} \label{kent} &\mathbb{E}[Wf''(W)+f'(W)-Wf(W)]   \\
&=   \mathbb{E}STf''(W) +\mathbb{E}f'(W)-\frac{1}{\sqrt{m}}\sum_{i=1}^m\mathbb{E}X_iT\bigg(f(S_iT)+(ST-S_iT)f'(S_iT) \\
&\quad+\frac{1}{2}(ST-S_iT)^2f''(S_iT)+\frac{1}{6}(ST-S_iT)^3f^{(3)}(S_i^{[1]}T)\bigg),
\end{align*}
where $S_{i}^{[1]}=S_{i}+\theta_1(S-S_{i})$ for some $\theta_1\in(0,1)$.  Later in the proof we shall write $T_{j}^{[q]}=T_{j}+\theta_q(T-T_{j})$, where $\theta_q\in(0,1)$.  Using independence and the fact that $\mathbb{E}X_i=0$, we have
\[\sum_{i=1}^{m}\mathbb{E}X_iTf(S_{i}T)=\sum_{i=1}^{m}\mathbb{E}X_i\mathbb{E}Tf(S_{i}T)=0.\]
As $ST-S_iT=\frac{1}{\sqrt{m}}X_iT$, we obtain
\begin{equation*}\mathbb{E}\{Wf''(W)+f'(W) -Wf(W)\}=N_1 +R_1+R_2,
\end{equation*}
where
\begin{eqnarray*}
N_1 &=& \mathbb{E}STf''(W)+\mathbb{E}f'(W) 
-\frac{1}{m}\sum_{i=1}^m\mathbb{E}X_i^2T^2f'(S_iT),\\
R_1 &=& -\frac{1}{2m^{3/2}}\sum_{i=1}^m\mathbb{E}X_i^3T^3f''(S_iT), \\
R_2 &=& -\frac{1}{6m^2}\sum_{i=1}^m\mathbb{E}X_i^4T^4f^{(3)}(S_i^{[1]}T), 
\end{eqnarray*}

We begin by bounding $R_1$ and $R_2$.  Taylor expanding $f''(S_iT)$ about $W$ and using (\ref{aones}) gives
\begin{align*}|R_1|&=\frac{|\mathbb{E}X^3|}{2m^{3/2}}\bigg|\sum_{i=1}^m\mathbb{E}T^3f''(S_iT)\bigg| \\
&=\frac{|\mathbb{E}X^3|}{2m^{3/2}}\bigg|\sum_{i=1}^m\mathbb{E}T^3f''(W)-\frac{1}{\sqrt{m}}\sum_{i=1}^m\mathbb{E}X_iT^4f^{(3)}(S_i^{[2]}T)\bigg| \\
&\leq\frac{|\mathbb{E}X^3|}{2\sqrt{m}}|\mathbb{E}T^3f''(W)|+\frac{\|f^{(3)}\||\mathbb{E}X^3|}{2m}\bigg(3+\frac{\mathbb{E}Y^4}{n}\bigg),
\end{align*}
where we used that the random variables $X,X_1,\ldots X_m$ are identically distributed.  In obtaining the last inequality we used that $\mathbb{E}T^4<3+\frac{\mathbb{E}Y^4}{n}$ and that $\mathbb{E}|X_i|\leq \sqrt{\mathbb{E}X_i^2}=1$.  Bounding the term $\frac{1}{\sqrt{m}}|\mathbb{E}T^3f''(W)|$ to the desired order of $O(m^{-1}+n^{-1})$ is somewhat involved and is deferred until the part II of the proof.

The bound for $R_2$ is immediate.  We have
\[|R_2|\leq\frac{\|f^{(3)}\|}{6m^2}\sum_{i=1}^m\mathbb{E}X^4\mathbb{E}T^4\leq\frac{\|f^{(3)}\|}{6m}\mathbb{E}X^4\bigg(3+\frac{\mathbb{E}Y^4}{n}\bigg).\]

We now consider $N_1$.  We use independence and that $\mathbb{E}X_i^2=1$ and then Taylor expand $f'(S_iT)$ about $W$ to obtain
\begin{align*}\frac{1}{m}\sum_{i=1}^m\mathbb{E}X_i^2T^2f'(S_iT) 
&=\mathbb{E}T^2f'(W)-\frac{1}{m^{3/2}}\sum_{i=1}^m\mathbb{E}X_iT^3f''(W) \\
&\quad-\frac{1}{2m^2}\sum_{i=1}^m\mathbb{E}X_i^2T^4f^{(3)}(S_i^{[3]}T).
\end{align*}
Taylor expanding $f''(W)$ about $S_iT$ gives
\begin{align*}\frac{1}{m^{3/2}}\sum_{i=1}^m\mathbb{E}X_iT^3f''(W)&=\frac{1}{m^{3/2}}\sum_{i=1}^m\mathbb{E}X_iT^3f''(S_iT)+\frac{1}{m^{2}}\sum_{i=1}^m\mathbb{E}X_i^2T^4f^{(3)}(S_i^{[4]}T) \\
&=\frac{1}{m^{2}}\sum_{i=1}^m\mathbb{E}X_i^2T^4f^{(3)}(S_i^{[4]}T),
\end{align*}
where we used independence and that the $X_i$ have zero mean to obtain the final inequality.  Putting this together we have that
\[N_1=\mathbb{E}STf''(W)+\mathbb{E}f'(W)-\mathbb{E}T^2f'(W)+R_3,
\]
where 
\[|R_3|\leq\frac{1}{2m^{2}}\bigg|\sum_{i=1}^m\mathbb{E}X_i^2T^4f^{(3)}(S_i^{[3]}T)\bigg|+\frac{1}{m^{2}}\bigg|\sum_{i=1}^m\mathbb{E}X_i^2T^4f^{(3)}(S_i^{[4]}T)\bigg|  
\leq\frac{3\|f^{(3)}\|}{2m}\bigg(3+\frac{\mathbb{E}Y^4}{n}\bigg).
\]
Noting that $T^2=\frac{1}{\sqrt{n}}\sum_{j=1}^nY_jT=\frac{1}{\sqrt{n}}\sum_{j=1}^nY_j(\frac{1}{\sqrt{n}}Y_j+T_j)$, we may write $N_1$ as 
\[N_1=N_2+R_3+R_4,\]
where
\begin{eqnarray*}
N_2&=&\mathbb{E}STf''(W)-\frac{1}{\sqrt{n}}\sum_{j=1}^n\mathbb{E}Y_jT_jf'(W),\\
R_4&=&\mathbb{E}f'(W)-\frac{1}{n}\sum_{j=1}^n\mathbb{E}Y_j^2f'(W).
\end{eqnarray*}

We first consider $R_4$.  Taylor expanding $f'(W)$ about $ST_j$ and using that $ST-ST_j=\frac{1}{\sqrt{n}}Y_jS$ gives
\begin{align*}R_4&=\frac{1}{n}\sum_{j=1}^n\mathbb{E}(1-Y_j^2)f'(W) \\
&= \frac{1}{n}\sum_{j=1}^n\mathbb{E}(1-Y_j^2)\bigg(f'(ST_j)+\frac{1}{\sqrt{n}}Y_jSf''(ST_j)+\frac{1}{2n}Y_j^2S^2f^{(3)}(ST_j^{[5]})\bigg) \\
&=-\frac{\mathbb{E}Y^3}{n^{3/2}}\sum_{j=1}^n\mathbb{E}Sf''(ST_j)+\frac{1}{2n^2}\sum_{j=1}^n\mathbb{E}(Y_j^2-Y_j^4)S^2f^{(3)}(ST_j^{[5]}),
\end{align*}
where we used independence and that $\mathbb{E}Y_j=0$ and $\mathbb{E}Y_j^2=1$ to obtain the final equality.  Taylor expanding $f''(ST_j)$ about $W$ gives
\[\frac{\mathbb{E}Y^3}{n^{3/2}}\sum_{j=1}^n\mathbb{E}Sf''(ST_j)=\frac{\mathbb{E}Y^3}{\sqrt{n}}\mathbb{E}Sf''(W)-\frac{\mathbb{E}Y^3}{n^2}\sum_{j=1}^n\mathbb{E}Y_jS^2f^{(3)}(ST_j^{[6]}).\]
Putting this together we have the following bound for $R_4$:
\[|R_4|\leq\frac{|\mathbb{E}Y^3|}{\sqrt{n}}|\mathbb{E}Sf''(W)|+\frac{\|f^{(3)}\|}{2n}(1+2|\mathbb{E}Y^3|+\mathbb{E}Y^4).\]
As was the case with the term $\frac{1}{\sqrt{m}}|\mathbb{E}T^3f''(W)|$, bounding the quantity $\frac{1}{\sqrt{n}}|\mathbb{E}Sf''(W)|$ to the desired order of $O(m^{-1}+n^{-1})$ is somewhat involved and is deferred until part II of the  proof.

We now consider $N_2$.  Taylor expanding $f'(W)$ about $ST_j$, then using independence and that $\mathbb{E}Y_j=0$ and $\mathbb{E}Y_j^2=1$ gives
\begin{align*}N_2&=\mathbb{E}STf''(W)-\frac{1}{\sqrt{n}}\sum_{j=1}^n\mathbb{E}Y_jT_j\bigg(f'(ST_j)+\frac{1}{\sqrt{n}}Y_jSf''(ST_j) \\
&\quad+\frac{1}{2n}Y_j^2S^2f^{(3)}(ST_j)+\frac{1}{6n^{3/2}}Y_j^3S^3f^{(4)}(ST_j^{[7]})\bigg) \\
&=R_5+R_6+R_7,
\end{align*}
where
\begin{eqnarray*}
R_5&=&\frac{\mathbb{E}Y^3}{2n^{3/2}}\sum_{j=1}^n\mathbb{E}S^2T_jf^{(3)}(ST_j),\\
|R_6|&=&\frac{1}{6n^{2}}\bigg|\sum_{j=1}^n\mathbb{E}Y_j^4S^3T_jf^{(4)}(ST_j^{[7]})\bigg|\leq\frac{\|f^{(4)}\|}{6n}\mathbb{E}Y^4\bigg(3+\frac{\mathbb{E}X^4}{m}\bigg)^{3/4},\\
R_7&=&\mathbb{E}STf''(W)-\frac{1}{n}\sum_{j=1}^n\mathbb{E}ST_jf''(ST_j).
\end{eqnarray*}
Using independence and that the $Y_j$ have zero mean and then Taylor expanding $f^{(3)}(ST_j)$ about $W$ gives
\begin{align*}|R_5|&=\frac{|\mathbb{E}Y^3|}{2n^{3/2}}\bigg|\sum_{j=1}^n\mathbb{E}S^2Tf^{(3)}(ST_j)\bigg| \\
&=\frac{|\mathbb{E}Y^3|}{2n^{3/2}}\bigg|\sum_{j=1}^n\mathbb{E}\bigg[S^2Tf^{(3)}(W)-\frac{1}{\sqrt{n}}Y_jS^3\bigg(T_j+\frac{1}{\sqrt{n}}Y_j\bigg)f^{(4)}(ST_j^{[8]})\bigg]\bigg| \\
&\leq\frac{|\mathbb{E}Y^3|}{2\sqrt{n}}|\mathbb{E}S^2Tf^{(3)}(W)|+\frac{\|f^{(4)}\||\mathbb{E}Y^3|}{2n}\bigg(1+\frac{1}{\sqrt{n}}\bigg)\bigg(3+\frac{\mathbb{E}X^4}{m}\bigg)^{3/4}.
\end{align*}
The term $\frac{1}{\sqrt{n}}|\mathbb{E}S^2Tf^{(3)}(W)|$ is bounded to the required order of $O(m^{-1}+n^{-1})$ in part II of the proof.

To bound $R_7$ we Taylor expand $f''(W)$ about $ST_j$ and use independence and that the $Y_j$ have zero mean to obtain
\begin{align*}|R_7|&=\frac{1}{n}\bigg|\sum_{j=1}^n\mathbb{E}ST[f''(ST)-f''(ST_j)]\bigg| \\
&=\frac{1}{n}\bigg|\sum_{j=1}^n\mathbb{E}S(T_j+Y_j)\bigg(\frac{1}{\sqrt{n}}Y_jSf^{(3)}(ST_j)+\frac{1}{2n}Y_j^2S^2f^{(4)}(ST_j^{[9]})\bigg)\bigg| \\
&=\frac{1}{n}\bigg|\sum_{j=1}^n\mathbb{E}\bigg[\frac{1}{\sqrt{n}}Y_j^2S^2f^{(3)}(ST_j)+\frac{1}{2n}Y_j^2\bigg(T_j+\frac{1}{\sqrt{n}}Y_j\bigg)S^3f^{(4)}(ST_j^{[9]})\bigg]\bigg| \\
&\leq\frac{\|f^{(3)}\|}{n}+\frac{\|f^{(4)}\|}{2n}\bigg(3+\frac{\mathbb{E}X^4}{m}\bigg)^{3/4}\bigg(1+\frac{\mathbb{E}|Y^3|}{\sqrt{n}}\bigg).
\end{align*}

To summarise, at this stage we have shown that $|\mathbb{E}h(W)-\mathrm{VG}_{1,0}^{1,0}h|\leq\sum_{k=1}^7|R_k|$.  We have also bounded all terms to order $m^{-1}+n^{-1}$, except for the terms $\frac{1}{\sqrt{m}}|\mathbb{E}T^3f''(W)|$, $\frac{1}{\sqrt{n}}|\mathbb{E}Sf''(W)|$ and $\frac{1}{\sqrt{n}}|\mathbb{E}S^2Tf^{(3)}(W)|$.  In part II of the proof we shall use symmetry arguments to bound these terms to the required order.  But before doing so, we obtain a useful bound for $\mathbb{E}S^2Tf^{(3)}(W)$ that will ensure that our bound for $|\mathbb{E}h(W)-\mathrm{VG}_{1,0}^{1,0}h|$ will only involve bounds of the first four derivatives of the $\mathrm{VG}_1(1,0,1,0)$ Stein equation (\ref{nice}), and hence will only involve the supremum norm of the first three derivatives of the test function $h$.  The bound is given in the following lemma, which is proved in Appendix A.

\begin{lemma}\label{lemsec4}Let $f:\mathbb{R}\rightarrow\mathbb{R}$ be four times differentiable, then
\begin{align*}
|\mathbb{E}S^2Tf^{(3)}(W)|&\leq|\mathbb{E}ST^2f''(W)|+|\mathbb{E}Sf''(W)|+\frac{\|f^{(3)}\|}{\sqrt{n}}(1+\mathbb{E}|Y^3|) \\
&\quad+\frac{\|f^{(4)}\|}{2\sqrt{n}}\bigg(2+\frac{2}{\sqrt{n}}+\mathbb{E}|Y^3|+\frac{\mathbb{E}Y^4}{\sqrt{n}}\bigg)\bigg(3+\frac{\mathbb{E}X^4}{m}\bigg)^{3/4}.
\end{align*}
\end{lemma}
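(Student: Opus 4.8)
The plan is to reduce the third derivative $f^{(3)}$ to a second derivative $f''$ by a short algebraic manipulation, and then to recognise the leftover quantity as the defect in the $N(0,1)$ Stein identity for the standardised sum $T=\tfrac1{\sqrt n}\sum_{j=1}^nY_j$, which is of order $n^{-1/2}$ via a leave-one-out expansion. To this end I introduce the auxiliary function $\phi(t):=S\,t\,f''(St)$, for which
\[\phi'(t)=Sf''(St)+S^2tf^{(3)}(St),\qquad t\phi(t)=St^2f''(St).\]
Since $W=ST$, this gives $\phi'(T)=Sf''(W)+S^2Tf^{(3)}(W)$ and $T\phi(T)=ST^2f''(W)$, whence the \emph{exact} decomposition
\[\mathbb{E}[S^2Tf^{(3)}(W)]=\mathbb{E}[ST^2f''(W)]-\mathbb{E}[Sf''(W)]+\mathcal{E},\qquad \mathcal{E}:=\mathbb{E}[\phi'(T)]-\mathbb{E}[T\phi(T)].\]
Taking absolute values immediately produces the first two terms on the right-hand side of the claimed bound, so it only remains to show that $|\mathcal{E}|$ is controlled by the $\|f^{(3)}\|$- and $\|f^{(4)}\|$-terms, each carrying a factor $n^{-1/2}$.

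To bound $\mathcal{E}$ I run the standard local-coupling expansion on $T$. Because $Y_j$ is independent of $(S,T_j)$ and has zero mean, $\mathbb{E}[Y_j\phi(T_j)]=0$, so
\[\mathbb{E}[T\phi(T)]=\frac1{\sqrt n}\sum_{j=1}^n\mathbb{E}\big[Y_j(\phi(T)-\phi(T_j))\big].\]
Taylor expanding $\phi(T)$ about $T_j$ with $T-T_j=\tfrac1{\sqrt n}Y_j$ (using $\mathbb{E}Y_j^2=1$ and a $\phi''$-remainder) converts the right-hand side into $\tfrac1n\sum_j\mathbb{E}[\phi'(T_j)]$ plus a remainder of the form $n^{-3/2}\sum_j(\cdots)$; a further mean-value step comparing $\tfrac1n\sum_j\mathbb{E}[\phi'(T_j)]$ with $\mathbb{E}[\phi'(T)]$ contributes another $n^{-3/2}\sum_j(\cdots)$ term. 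Thus $\mathcal{E}$ is a sum of $O(n)$ terms each weighted by $n^{-3/2}$, hence $O(n^{-1/2})$, and every term involves $\phi''(t)=2S^2f^{(3)}(St)+S^3tf^{(4)}(St)$.

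The quantitative bound then follows by inserting this expression for $\phi''$ and estimating the resulting terms using: the independence of $S$ from all the $Y_j$; $\mathbb{E}S^2=1$ and $\mathbb{E}|S|^3\le(\mathbb{E}S^4)^{3/4}<(3+m^{-1}\mathbb{E}X^4)^{3/4}$ by Lemma \ref{motor}; $\mathbb{E}|Y_j|\le(\mathbb{E}Y_j^2)^{1/2}=1$; the finiteness of $\mathbb{E}|Y^3|$ and $\mathbb{E}Y^4$ (guaranteed by the sixth-moment assumption of Theorem \ref{limit svg}); $\mathbb{E}T_j^2=(n-1)/n<1$; and the crude estimate $|\xi|\le|T_j|+\tfrac1{\sqrt n}|Y_j|$ for any intermediate point $\xi$ lying between $T_j$ and $T$. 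Where it is needed to match the stated constants, I would further split the $f^{(3)}$ part of $\phi''$ at an intermediate point as $f^{(3)}(ST_j)$ plus a remainder bounded by $\tfrac1{\sqrt n}|S||Y_j|\,\|f^{(4)}\|$, exploiting that $\mathbb{E}[Y_jS^2f^{(3)}(ST_j)]=0$ by independence so that only the $O(n^{-1/2})$ remainder survives. The main obstacle is not conceptual but is exactly this bookkeeping of the numerous remainder terms and the intermediate points they involve, so as to land precisely on the asserted coefficients; once the reduction via $\phi$ and the identification of the two $f''$-terms are in place, the rest is routine.
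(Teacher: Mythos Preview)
Your approach is correct and, underneath the packaging, is the same leave-one-out Taylor expansion on $T$ that the paper carries out. The paper starts from $\mathbb{E}[ST^2f''(W)]$, writes $T=T_j+\tfrac{1}{\sqrt n}Y_j$, Taylor expands $f''(W)$ about $ST_j$ to second order, and then Taylor expands back to isolate $\mathbb{E}[S^2Tf^{(3)}(W)]$ plus remainders $R_1,R_2,R_3$; your identity $\mathcal{E}=\mathbb{E}[\phi'(T)]-\mathbb{E}[T\phi(T)]$ is exactly the combination $\mathbb{E}[Sf''(W)]-R_1-R_2-R_3$, so the two computations are the same object viewed differently. Your framing via the auxiliary $\phi(t)=Stf''(St)$ and the recognition of $\mathcal{E}$ as the Stein-identity defect for $T$ is a cleaner conceptual organisation than the paper's, which proceeds purely by hand. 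The one place to be careful is that bounding remainders through $\phi''(t)=2S^2f^{(3)}(St)+S^3tf^{(4)}(St)$ bundles an $f^{(3)}$-term into what the paper treats as a pure $f^{(4)}$-remainder, and a naive estimate there gives $\tfrac{\|f^{(3)}\|}{\sqrt n}(2+\mathbb{E}|Y^3|)$ rather than the stated $\tfrac{\|f^{(3)}\|}{\sqrt n}(1+\mathbb{E}|Y^3|)$; your indicated fix (expand $f^{(3)}$ at $ST_j$ and use $\mathbb{E}[Y_jS^2f^{(3)}(ST_j)]=0$) is precisely what the paper does to recover the sharper constant, so with that unpacking you land on the same bound.
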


\subsubsection{Proof Part II: Symmetry Argument for Optimal Rate}
We now obtain bounds for $\frac{1}{\sqrt{n}}\mathbb{E}Sf''(W)$, $\frac{1}{\sqrt{n}}\mathbb{E}ST^2f''(W)$ and $\frac{1}{\sqrt{m}}\mathbb{E}T^3f''(W)$.  To obtain the desired rate of convergence we shall use symmetry arguments, which are similar to those used in Section 4.1.2$.$ of Pickett \cite{pickett} to achieve the optimal rate of convergence for chi-square limit theorems.

We begin by considering the bivariate standard normal Stein equation (see, for example, Goldstein and Rinott \cite{goldstein1}) with test functions $g_1(s,t)=sf''(st)$, $g_2(s,t)=st^2f''(st)$ and $g_3(s,t)=t^3f''(st)$.  The bivariate standard normal Stein equation with test function $g_k(s,t)$, $k=1,2,3,$ and solution $\psi_k$ is given by
\begin{equation} \label{notts} \frac{\partial^2\psi_k}{\partial s^2}(s,t)+\frac{\partial^2 \psi_k}{\partial t^2}(s,t)-s\frac{\partial \psi_k}{\partial s}(s,t) -t\frac{\partial \psi_k}{\partial t}(s,t) = g_k(s,t) -\mathbb{E}g_k(Z_1,Z_2),
\end{equation} 
where $Z_1$ and $Z_2$ are independent standard normal random variables.  

For large $m$ and $n$ we have $S \approx N(0,1)$ and $T \approx N(0,1)$, so we can apply the $O(m^{-1/2}+n^{-1/2})$ bivariate central limit convergence rate (see, for example, Reinert and R\"ollin \cite{reinert 1}) to bound the quantities $|\mathbb{E}g_k(S,T)-\mathbb{E}g_k(Z_1,Z_2)|$, $k=1,2,3$.  However, as the test functions $g_k$ are odd functions ($g_k(s,t)=-g_k(-s,-t)$ for all $s,t\in\mathbb{R}$), the following lemma ensures that $\mathbb{E}g_k(Z_1,Z_2)=0$, meaning that it should be possible to bound the expectations $\mathbb{E}g_k(S,T)$ to order $m^{-1/2}+n^{-1/2}$, which would yield $O(m^{-1}+n^{-1})$ bounds for  $\frac{1}{\sqrt{n}}\mathbb{E}Sf''(W)$, $\frac{1}{\sqrt{n}}\mathbb{E}ST^2f''(W)$ and $\frac{1}{\sqrt{m}}\mathbb{E}T^3f''(W)$. 

\begin{lemma} \label{moss}
Suppose $g(x,y) = -g(-x,-y)$, then $\mathbb{E}g(Z_1,Z_2) =0$ where $Z_1$, $Z_2$ are independent standard normal random variables.  In particular, if $Z_1$ and $Z_2$ are independent standard normal random variables, then $\mathbb{E}g_k(Z_1Z_2)=0$, for $k=1,2,3$.
\end{lemma}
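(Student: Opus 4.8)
The plan is to exploit the reflection symmetry of the bivariate standard normal law. Since $Z_1$ and $Z_2$ are independent standard normals, the pair $(-Z_1,-Z_2)$ has the same joint distribution as $(Z_1,Z_2)$ (the density $\frac{1}{2\pi}\mathrm{e}^{-(x^2+y^2)/2}$ is invariant under $(x,y)\mapsto(-x,-y)$). Assuming $\mathbb{E}|g(Z_1,Z_2)|<\infty$ so that the expectation is well defined, I would write
\[
\mathbb{E}g(Z_1,Z_2)=\mathbb{E}g(-Z_1,-Z_2)=\mathbb{E}[-g(Z_1,Z_2)]=-\mathbb{E}g(Z_1,Z_2),
\]
where the first equality uses the equality in distribution of $(Z_1,Z_2)$ and $(-Z_1,-Z_2)$, and the second uses the hypothesis $g(x,y)=-g(-x,-y)$. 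Hence $2\mathbb{E}g(Z_1,Z_2)=0$, which gives $\mathbb{E}g(Z_1,Z_2)=0$.

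It then remains to check that $g_1,g_2,g_3$ satisfy the oddness hypothesis and the integrability condition. For the first point I would just substitute: $g_1(-s,-t)=(-s)f''\big((-s)(-t)\big)=-s f''(st)=-g_1(s,t)$, $g_2(-s,-t)=(-s)(-t)^2 f''(st)=-st^2 f''(st)=-g_2(s,t)$, and $g_3(-s,-t)=(-t)^3 f''(st)=-t^3 f''(st)=-g_3(s,t)$, so each $g_k$ is odd in the required sense. For integrability, in the application $f$ is the solution of the $\mathrm{VG}_1(1,0,1,0)$ Stein equation with $f''$ bounded (by Theorem \ref{vgderbound}), so each $g_k(s,t)$ grows at most polynomially in $(s,t)$ and is therefore integrable against the Gaussian density; thus $\mathbb{E}g_k(Z_1,Z_2)=0$ for $k=1,2,3$.

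There is essentially no serious obstacle here: the whole argument is the one-line symmetry computation above, and the only point requiring a word of care is that the expectation must be finite for the cancellation to be legitimate, which is immediate for the $g_k$ at hand given the boundedness of $f''$.
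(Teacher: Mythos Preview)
Your argument is correct and is exactly the approach taken in the paper: define $(Z_1',Z_2')=(-Z_1,-Z_2)$, note it has the same law as $(Z_1,Z_2)$, and conclude $\mathbb{E}g(Z_1,Z_2)=-\mathbb{E}g(Z_1,Z_2)$. Your additional explicit verification that each $g_k$ is odd and integrable is more than the paper spells out, but entirely in the same spirit.
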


\begin{proof}
Let $Z'_1=-Z_1$ and $Z'_2=-Z_2$.  Then $Z'_1 \stackrel{\mathcal{D}}{=} Z_1$ and  $Z_2 \stackrel{\mathcal{D}}{=} Z'_2$, so $\mathbb{E}g(Z_1,Z_2) = -\mathbb{E}g(Z'_1,Z'_2) = -\mathbb{E}g(Z_1,Z_2)$, and therefore $\mathbb{E}g(Z_1,Z_2) =0$.
\end{proof}
We now apply Lemma \ref{moss} and then perform Taylor expansions to bound the expectations $\mathbb{E}g_k(S,T)$.  Providing that a solution $\psi_k$ exists for the test function $g_k$, we have
\begin{align*}\mathbb{E}g_k(S,T) &= \mathbb{E}\bigg\{\frac{\partial^2\psi_k}{\partial s^2}(S,T)+\frac{\partial^2 \psi_k}{\partial t^2}(S,T)-S\frac{\partial \psi_k}{\partial s}(S,T) -T\frac{\partial \psi_k}{\partial t}(S,T)\bigg\} \\
&= R_8^k+R_9^k+R_{10}^k+R_{11}^k,
\end{align*}
where
\begin{eqnarray*}R_8^k&=&\frac{1}{2m^{3/2}}\sum_{i=1}^m\mathbb{E}X_i^3\frac{\partial^3\psi_k}{\partial s^3}\bigg(S_i+\phi_1\frac{X_i}{\sqrt{m}},T\bigg), \\
R_9^k &=& \frac{1}{2n^{3/2}}\sum_{j=1}^n\mathbb{E}Y_j^3\frac{\partial^3\psi_k}{\partial t^3}\bigg(S,T_j+\phi_2\frac{Y_j}{\sqrt{n}}\bigg), \\ 
R_{10}^k&=&\frac{1}{m^{3/2}}\sum_{i=1}^m\mathbb{E}X_i\frac{\partial^3\psi_k}{\partial s^3}\bigg(S_i+\phi_3\frac{X_i}{\sqrt{m}},T\bigg), \\
R_{11}^k &=& \frac{1}{n^{3/2}}\sum_{j=1}^n\mathbb{E}Y_j\frac{\partial^3\psi_k}{\partial t^3}\bigg(S,T_j+\phi_4\frac{Y_j}{\sqrt{n}}\bigg),
\end{eqnarray*}
with $\phi_1$, $\phi_2$, $\phi_3$, $\phi_4$ $\in (0,1)$.

Before we bound the remainder terms, we need bounds for the third order partial derivatives of the solution $\psi_k$ in terms of the derivatives of $f$.  We achieve this task by using the following lemma, the proof of which is given in Appendix A.  Before stating the lemma, we define the double factorial function.  The double factorial of a positive integer $n$ is given by
\begin{equation}n!!=\begin{cases} 1\cdot 3\cdot 5\cdot \cdots (n-2)\cdot n, & \text{$n>0$ odd,} \\
2\cdot 4\cdot 6 \cdots (n-2)\cdot n, & \text{$n>0$ even,}
\end{cases}
\end{equation}
and we define $(-1)!!=0!!=1$ (Arfken \cite{arfken}, p.547).

\begin{lemma}\label{appe33}Suppose that $f:\mathbb{R}^2\rightarrow\mathbb{R}$ is four times differentiable and let $g(s,t)=s^at^bf''(st)$, where $a,b\in\mathbb{N}$.  Then, the third order partial derivatives of the solution $\psi$ to the standard bivariate normal Stein equation (\ref{notts}) with test function $g$ are bounded as follows
\begin{eqnarray} \label{2ones} \bigg|\frac{\partial^3\psi}{\partial s^3}\bigg|&\leq&\frac{\pi}{4}\{2^{a+b}\|f^{(4)}\|(|s|^a+a!!)(|t|^{b+2}+(b+1)!!) \nonumber \\
&&+a2^{a+b-1}\|f^{(3)}\|(|s|^{a-1}+(a-1)!!)(|t|^{b+1}+b!!) \nonumber \\
&&+a(a-1)2^{a+b-4}\|f''\|(|s|^{a-2}+(a-2)!!)(|t|^b+(b-1)!!)\}, \\
\bigg|\frac{\partial^3\psi}{\partial t^3}\bigg|&\leq&\frac{\pi}{4}\{2^{a+b}\|f^{(4)}\|(|s|^{a+2}+(a+1)!!)(|t|^{b}+b!!) \nonumber \\
&&+b2^{a+b-1}\|f^{(3)}\|(|s|^{a+1}+a!!)(|t|^{b-1}+(b-1)!!) \nonumber \\
\label{256ones}&&+b(b-1)2^{a+b-4}\|f''\|(|s|^{a-2}+(a-1)!!)(|t|^{b-2}+(b-2)!!)\}. 
\end{eqnarray}
\end{lemma}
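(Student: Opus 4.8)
The plan is to build the solution $\psi$ of the bivariate standard normal Stein equation (\ref{notts}) from the Ornstein--Uhlenbeck semigroup, exactly as in Barbour \cite{barbour2} and Goldstein and Rinott \cite{goldstein1}. Writing $Z=(Z_1,Z_2)$ for a standard bivariate normal vector, the solution with test function $g$ admits the representation
\[
\psi(s,t)=-\frac{1}{2}\int_0^1\frac{1}{u}\Big[\mathbb{E}\,g\big(\sqrt{u}\,s+\sqrt{1-u}\,Z_1,\ \sqrt{u}\,t+\sqrt{1-u}\,Z_2\big)-\mathbb{E}\,g(Z_1,Z_2)\Big]\,\mathrm{d}u.
\]
Since $g(s,t)=s^at^bf''(st)$ is a polynomial in $(s,t)$ times the bounded function $f''(st)$, all the differentiations under the integral sign performed below are justified by dominated convergence together with the finiteness of Gaussian polynomial moments, and $\mathbb{E}|g(Z_1,Z_2)|<\infty$. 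By symmetry (note $g(s,t)=t^bs^af''(ts)$) it suffices to establish (\ref{2ones}); the bound (\ref{256ones}) then follows by interchanging the roles of $s$ and $t$ and of $a$ and $b$.

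First I would differentiate twice with respect to $s$. Each $\partial_s$ inside the expectation produces a factor $\sqrt{u}$, so after two differentiations the weight $1/u$ cancels and
\[
\frac{\partial^2\psi}{\partial s^2}(s,t)=-\frac12\int_0^1\mathbb{E}\big[(\partial_1^2 g)\big(\sqrt{u}\,s+\sqrt{1-u}\,Z_1,\ \sqrt{u}\,t+\sqrt{1-u}\,Z_2\big)\big]\,\mathrm{d}u,
\]
where, from $g(s,t)=s^at^bf''(st)$,
\[
\partial_1^2 g(s,t)=a(a-1)s^{a-2}t^bf''(st)+2a\,s^{a-1}t^{b+1}f^{(3)}(st)+s^at^{b+2}f^{(4)}(st).
\]
This uses only that $f$ is four times differentiable. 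A third direct differentiation would require $f^{(5)}$, so instead I would extract the last $s$-derivative by Gaussian integration by parts (Stein's identity): for smooth $\phi$,
\[
\frac{\partial}{\partial s}\,\mathbb{E}\big[\phi(\sqrt{u}\,s+\sqrt{1-u}\,Z_1,\cdot)\big]=\frac{\sqrt{u}}{\sqrt{1-u}}\,\mathbb{E}\big[Z_1\,\phi(\sqrt{u}\,s+\sqrt{1-u}\,Z_1,\cdot)\big],
\]
applied with $\phi=\partial_1^2 g$. This yields
\[
\frac{\partial^3\psi}{\partial s^3}(s,t)=-\frac12\int_0^1\frac{\sqrt{u}}{\sqrt{1-u}}\,\mathbb{E}\big[Z_1\,(\partial_1^2 g)\big(\sqrt{u}\,s+\sqrt{1-u}\,Z_1,\ \sqrt{u}\,t+\sqrt{1-u}\,Z_2\big)\big]\,\mathrm{d}u,
\]
and the integral converges because $\int_0^1\sqrt{u}\,(1-u)^{-1/2}\,\mathrm{d}u=B(3/2,1/2)=\pi/2<\infty$.

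It then remains to estimate the integrand uniformly in $u$. Using $|f^{(k)}|\le\|f^{(k)}\|$ and the independence of $Z_1$ and $Z_2$, each of the three terms of $\partial_1^2 g$ factorises as a first-coordinate moment times a second-coordinate moment; for instance the $f^{(4)}$ term contributes $\|f^{(4)}\|\,\mathbb{E}[|Z_1|\,|\sqrt{u}\,s+\sqrt{1-u}\,Z_1|^{a}]\cdot\mathbb{E}[|\sqrt{u}\,t+\sqrt{1-u}\,Z_2|^{b+2}]$. Since $\sqrt{u},\sqrt{1-u}\le1$, the elementary inequality $(x+y)^p\le2^{p-1}(x^p+y^p)$ for $x,y\ge0$, $p\ge1$, combined with the Gaussian moment bound $\mathbb{E}|Z|^m\le(m-1)!!$ (with the convention $(-1)!!=0!!=1$), gives $\mathbb{E}[|Z_1|\,|\sqrt{u}\,s+\sqrt{1-u}\,Z_1|^{p}]\le2^{p-1}(|s|^{p}+p!!)$ and $\mathbb{E}[|\sqrt{u}\,t+\sqrt{1-u}\,Z_2|^{q}]\le2^{q-1}(|t|^{q}+(q-1)!!)$, the coefficients $a$ and $a(a-1)$ vanishing whenever the associated power would be negative. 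Applying these with $(p,q)\in\{(a,b+2),(a-1,b+1),(a-2,b)\}$ for the three terms of $\partial_1^2 g$, and multiplying the resulting uniform bound by $\tfrac12\int_0^1\sqrt{u}\,(1-u)^{-1/2}\,\mathrm{d}u=\pi/4$, yields precisely (\ref{2ones}); the bound (\ref{256ones}) follows in the same way from $\partial_2^2 g(s,t)=b(b-1)s^at^{b-2}f''(st)+2b\,s^{a+1}t^{b-1}f^{(3)}(st)+s^{a+2}t^bf^{(4)}(st)$.

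The main obstacle is the step just described: because $f$ is assumed only four times differentiable, $g$ is only twice differentiable in each variable, so one cannot reach a third derivative of $\psi$ by naive differentiation of $g$ and must instead use the semigroup representation together with Gaussian integration by parts to trade the missing derivative for a factor $Z_1$ (or $Z_2$) at the cost of the extra weight $(1-u)^{-1/2}$. The remaining work is routine: checking that this weight is still integrable, and keeping track of the binomial and Gaussian-moment constants so as to land on the stated bounds.
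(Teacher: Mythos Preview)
Your proof is correct and is essentially the same as the paper's: both use the Ornstein--Uhlenbeck/Mehler semigroup representation of $\psi$, differentiate twice directly, and then trade the third $s$-derivative for a factor $Z_1$ via Gaussian integration by parts, followed by the same elementary bound $|p+q|^n\le 2^{n-1}(|p|^n+|q|^n)$ and the absolute Gaussian moment estimate. The only cosmetic differences are that the paper parametrises the semigroup over $u\in[0,\infty)$ (with $e^{-u}$ and $\sqrt{1-e^{-2u}}$) and quotes the formula from Rai\v{c} in which the integration-by-parts step is already built in, whereas you use the equivalent $u\in[0,1]$ parametrisation (related by $u\mapsto e^{-2u}$) and perform the IBP explicitly; the two time integrals give the same constant $\pi/4$.
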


With these bounds it is straightforward to bound the remainder terms.  The following lemma allows us to easily deduce bounds for the remainder terms $R_8^k$, $R_9^k$, $R_{10}^k$ and $R_{11}^k$, $k=1,2,3$.
\newpage

\begin{lemma} \label{4ones} Suppose that $f:\mathbb{R}^2\rightarrow\mathbb{R}$ is four times differentiable, and let $g(s,t)=s^at^bf''(st)$, where $a,b\in\mathbb{N}$, then
\begin{align*}
|R_8^k|&\leq\frac{\pi}{8\sqrt{m}}\bigg\{2^{a+b}\|f^{(4)}\|\bigg[2^{a-1}\bigg[\mathbb{E}|X|^3\mathbb{E}|S|^a+\frac{\mathbb{E}|X|^{a+3}}{m^{a/2}}\bigg]+a!!\mathbb{E}|X|^3\bigg] (\mathbb{E}|T|^{b+2}+(b+1)!!) \\
&\quad+a2^{a+b-1}\|f^{(3)}\|\bigg[2^{a-2}\bigg[\mathbb{E}|X|^3\mathbb{E}|S|^{a-1}+\frac{\mathbb{E}|X|^{a+2}}{m^{(a-1)/2}}\bigg]+(a-1)!!\mathbb{E}|X|^3\bigg] (\mathbb{E}|T|^{b+1}+b!!) \\
&\quad+a(a-1)2^{a+b-4}\|f''\|\bigg[2^{a-3}\bigg[\mathbb{E}|X|^3\mathbb{E}|S|^{a-2}+\frac{\mathbb{E}|X|^{a+1}}{m^{(a-2)/2}}\bigg]+(a-2)!!\mathbb{E}|X|^3\bigg] \\
&\quad\times(\mathbb{E}|T|^b+(b-1)!!)\bigg\}, \\
|R_9^k| &\leq \frac{\pi}{8\sqrt{n}}\bigg\{2^{a+b}\|f^{(4)}\|\bigg[2^{b-1}\bigg[\mathbb{E}|Y|^3\mathbb{E}|T|^b+\frac{\mathbb{E}|Y|^{b+3}}{n^{b/2}}\bigg]+b!!\mathbb{E}|Y|^3\bigg] (\mathbb{E}|S|^{a+2}+(a+1)!!) \\
&\quad+b2^{a+b-1}\|f^{(3)}\|\bigg[2^{b-2}\bigg[\mathbb{E}|Y|^3\mathbb{E}|T|^{b-1}+\frac{\mathbb{E}|Y|^{b+2}}{n^{(b-1)/2}}\bigg]+(b-1)!!\mathbb{E}|Y|^3\bigg] (\mathbb{E}|S|^{a+1}+a!!) \\
&\quad+b(b-1)2^{a+b-4}\|f''\|\bigg[2^{b-3}\bigg[\mathbb{E}|Y|^3\mathbb{E}|T|^{b-2}+\frac{\mathbb{E}|Y|^{b+1}}{n^{(b-2)/2}}\bigg]+(b-2)!!\mathbb{E}|Y|^3\bigg] \\
&\quad\times(\mathbb{E}|S|^a+(a-1)!!)\bigg\}.
\end{align*}
The bound for $R_{10}^k$ is similar to the bound for $R_8^k$ but with $\mathbb{E}X^p$ and $\mathbb{E}|X^p|$ replaced with $\mathbb{E}X^{p-2}$ and $\mathbb{E}|X^{p-2}|$ respectively.  The bound for $R_{11}^k$ is similar to the bound for $R_9^k$ but with $\mathbb{E}Y^p$ and $\mathbb{E}|Y^p|$ replaced with $\mathbb{E}Y^{p-2}$ and $\mathbb{E}|Y^{p-2}|$. respectively. 
\end{lemma}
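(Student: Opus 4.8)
The plan is to feed the pointwise estimates of Lemma \ref{appe33} into the definitions of $R_8^k,\dots,R_{11}^k$ and then tidy up using a couple of elementary inequalities together with the earlier moment lemmas. Since each relevant test function is of the form $g(s,t)=s^at^bf''(st)$, Lemma \ref{appe33} gives, at every point $(s,t)$,
\[
\Big|\tfrac{\partial^3\psi}{\partial s^3}(s,t)\Big|\le\tfrac{\pi}{4}\big\{2^{a+b}\|f^{(4)}\|(|s|^a+a!!)(|t|^{b+2}+(b+1)!!)+\cdots\big\},
\]
with the two further terms carrying $\|f^{(3)}\|$ and $\|f''\|$, and the mirror estimate for $\partial^3\psi/\partial t^3$. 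Everything then comes down to substituting the perturbed evaluation points and controlling the resulting moments.

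First I would treat $R_8^k=\frac{1}{2m^{3/2}}\sum_{i=1}^m\mathbb{E}X_i^3\,\frac{\partial^3\psi_k}{\partial s^3}(S_i+\phi_1 X_i/\sqrt m,T)$. Bring the absolute value inside the sum, insert the Lemma \ref{appe33} bound, and note that $T$ is a function of the $Y_j$'s only, hence independent of $X_i$ and of $S_i$, so each product factorises and the $t$-factor simply becomes $\mathbb{E}|T|^{b+2}+(b+1)!!$, and so on. The only real point is the perturbed first argument: from the elementary inequality $(u+v)^p\le 2^{p-1}(u^p+v^p)$ for $u,v\ge0$, $p\ge1$, together with $\phi_1\in(0,1)$, one has $|S_i+\phi_1 X_i/\sqrt m|^a\le 2^{a-1}(|S_i|^a+|X_i|^a/m^{a/2})$, whence, using independence of $X_i$ and $S_i$ and the identical-distribution hypothesis,
\[
\mathbb{E}\big[|X_i|^3|S_i+\phi_1 X_i/\sqrt m|^a\big]\le 2^{a-1}\Big(\mathbb{E}|X|^3\,\mathbb{E}|S_i|^a+\tfrac{\mathbb{E}|X|^{a+3}}{m^{a/2}}\Big),
\]
and Lemma \ref{crossbike} replaces $\mathbb{E}|S_i|^a$ by $\mathbb{E}|S|^a$; the additive constants $a!!$, $(a-1)!!$, $(a-2)!!$ each merely pick up a factor $\mathbb{E}|X|^3$. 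Since all $m$ summands are identically distributed, the sum contributes a factor $m$, and $\frac{1}{2m^{3/2}}\cdot m\cdot\frac{\pi}{4}=\frac{\pi}{8\sqrt m}$, which is precisely the constant appearing in the stated bound for $|R_8^k|$.

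The bound for $R_9^k$ is obtained in the mirror-image way: now it is the second argument that is perturbed, $(S,T_j+\phi_2 Y_j/\sqrt n)$, so the same splitting is applied to $|T_j+\phi_2 Y_j/\sqrt n|^b$ (with $\mathbb{E}|Y|^3$, $\mathbb{E}|Y|^{b+3}$ and $n$ playing the roles of $\mathbb{E}|X|^3$, $\mathbb{E}|X|^{a+3}$ and $m$), Lemma \ref{crossbike} is used in the form $\mathbb{E}|T_j|^p\le\mathbb{E}|T|^p$, the $s$-factors stay as $\mathbb{E}|S|^{a+2}+(a+1)!!$ and so on (here $S$ is independent of the $Y_j$'s), and $\frac{1}{2n^{3/2}}\cdot n\cdot\frac{\pi}{4}=\frac{\pi}{8\sqrt n}$. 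For $R_{10}^k$ and $R_{11}^k$ the only changes are that a single factor $\mathbb{E}X_i$ (resp. $\mathbb{E}Y_j$) appears instead of $\mathbb{E}X_i^3$ (resp. $\mathbb{E}Y_j^3$) and the prefactor is $m^{-3/2}$ (resp. $n^{-3/2}$) without the $\tfrac12$; running the identical computation lowers by two every power of $X$ (resp. $Y$) appearing in the bound, which is exactly the stated modification. I do not anticipate a genuine obstacle here: once Lemmas \ref{appe33} and \ref{crossbike} are in hand this is essentially bookkeeping. The one step requiring care is keeping straight which argument of $\psi_k$ is perturbed, and hence which moment factor must be split via $(u+v)^p\le 2^{p-1}(u^p+v^p)$, and making the exponents of $m$ (resp. $n$) in the $m^{-a/2}$-type terms line up — that is the only place where an error could realistically slip in.
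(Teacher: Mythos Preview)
Your proposal is correct and follows essentially the same route as the paper: apply the pointwise bound of Lemma \ref{appe33} at the perturbed argument, split $|S_i+\phi_1 X_i/\sqrt m|^p$ via $(u+v)^p\le 2^{p-1}(u^p+v^p)$, use independence of $X_i$ from $S_i$ and of $T$ from both, invoke Lemma \ref{crossbike} to pass from $\mathbb{E}|S_i|^p$ to $\mathbb{E}|S|^p$, and collapse the sum of $m$ identical terms to produce the prefactor $\tfrac{\pi}{8\sqrt m}$. The paper's proof is exactly this, with the same symmetry remark for $R_9^k$ and the same observation that $R_{10}^k,R_{11}^k$ differ only by the replacement $X_i^3\mapsto X_i$ (resp.\ $Y_j^3\mapsto Y_j$).
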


\begin{proof}We prove that the bound for $R_8^k$ holds; the bound for $R_9^k$ then follows by symmetry.  We begin by defining $S_i^*=S_i+\frac{\phi_1}{\sqrt{m}}X_i$.  We note the following simple bound for $|S_i^*|^p$, for $p\geq 1$:
\begin{equation} \label{5ones} |S_i^*|^p=\bigg|S_i+\frac{\phi_1}{\sqrt{m}}X_i\bigg|^p\leq2^{p-1}\bigg(|S_i|^p+\frac{\phi_1^p}{m^{p/2}}|X_i|^p\bigg)\leq2^{p-1}\bigg(|S_i|^p+\frac{|X_i|^p}{m^{p/2}}\bigg).
\end{equation}
Using our bound (\ref{2ones}) for the third order partial derivative of $\psi$ with respect to $s$, we have
\begin{align*}|R_8^k|&=\frac{1}{2m^{3/2}}\bigg|\sum_{i=1}^m\mathbb{E}X_i^3\frac{\partial^3\psi}{\partial s^3}(S_i^*,T)\bigg| \\
&\leq\frac{\pi}{8m^{3/2}}\sum_{i=1}^m\mathbb{E}\bigg|X_i^3\bigg\{2^{a+b}\|f^{(4)}\|(|S_i^*|^a+a!!)(|T|^{b+2}+(b+1)!!) \\
&\quad+a2^{a+b-1}\|f^{(3)}\|(|S_i^*|^{a-1}+(a-1)!!)(|T|^{b+1}+b!!) \\
&\quad+a(a-1)2^{a+b-4}\|f''\|(|S_i^*|^{a-2}+(a-2)!!)(|T|^b+(b-1)!!)\}| 
\end{align*}
\begin{align*}
&\leq\frac{\pi}{8m^{3/2}}\sum_{i=1}^m\mathbb{E}\bigg|X_i^3\bigg\{2^{a+b}\|f^{(4)}\|\bigg[2^{a-1}\bigg[|S_i|^a+\frac{|X_i|^a}{m^{a/2}}\bigg]+a!!\bigg](|T|^{b+2}+(b+1)!!) \\
&\quad+a2^{a+b-1}\|f^{(3)}\|\bigg[2^{a-2}\bigg[|S_i|^{a-1}+\frac{|X_i|^{a-1}}{m^{(a-1)/2}}\bigg]+a!!\bigg](|T|^{b+1}+b!!) \\
&\quad+a(a-1)2^{a+b-4}\|f''\|\bigg[2^{a-3}\bigg[|S_i|^{a-2}+\frac{|X_i|^{a-2}}{m^{(a-2)/2}}\bigg]+a!!\bigg](|T|^b+(b-1)!!)\bigg\}\bigg|,
\end{align*}
where we used (\ref{5ones}) to obtain the final inequality.  Applying the triangle inequality, that $X_i$ and $S_i$ are independent and that, by Lemma \ref{crossbike}, $\mathbb{E}|S_i|^p\leq \mathbb{E}|S|^p$ gives the desired bound.  The final statement of the lemma is clear.
\end{proof}

We can bound $R_8^k$, $R_9^k$, $R_{10}^k$ and $R_{11}^k$ by using the bounds in Lemma \ref{4ones}.  We illustrate the argument by bounding $R_{11}^1$.  In this case we have $g_1(s,t)=sf''(st)$, that is $a=1$ and $b=0$.  We have
\begin{align*}|R_8^1|&\leq \frac{\pi}{8\sqrt{m}}\bigg\{2\|f^{(4)}\|\bigg(2|X^3|+\frac{\mathbb{E}X^4}{\sqrt{m}}\bigg)(\mathbb{E}T^2+1!!)+\|f^{(3)}\|(2\mathbb{E}|X^3|)(\mathbb{E}|T|+0!!)\bigg\} \\
&=\frac{\pi}{2\sqrt{m}}\bigg\{\|f^{(4)}\|\bigg(2\mathbb{E}|X^3|+\frac{\mathbb{E}X^4}{\sqrt{m}}\bigg)+\|f^{(3)}\|\mathbb{E}|X^3|\bigg\},
\end{align*}
where we used that $0!!=1!!=1$, and $\mathbb{E}|T|\leq\sqrt{\mathbb{E}T^2}=1$ to obtain the second equality.  Continuing in this manner gives the following bounds:
\begin{align*}|R_9^1|&\leq\frac{\pi\|f^{(4)}\|}{2\sqrt{n}}\mathbb{E}|Y^3|\bigg[2+\bigg(3+\frac{\mathbb{E}X^4}{m}\bigg)^{3/4}\bigg], \\
|R_8^2|&\leq \frac{\pi}{\sqrt{m}}\bigg\{\|f^{(4)}\|\bigg(2\mathbb{E}|X^3|+\frac{\mathbb{E}X^4}{\sqrt{m}}\bigg)\bigg(6+\frac{\mathbb{E}Y^4}{n}\bigg) +\|f^{(3)}\|\mathbb{E}|X^3|\bigg[2+\bigg(3+\frac{\mathbb{E}Y^4}{n}\bigg)^{3/4}\bigg]\bigg\}, \\
|R_9^2|&\leq \frac{2\pi}{\sqrt{n}}\bigg\{\|f^{(4)}\|\bigg(2\mathbb{E}|Y^3|+\frac{\mathbb{E}|Y^5|}{n}\bigg)\bigg(2+\bigg(3+\frac{\mathbb{E}X^4}{m}\bigg)^{3/4}\bigg) \\
&\quad+\|f^{(3)}\|\bigg[2\mathbb{E}|Y^3|+\frac{\mathbb{E}Y^4}{\sqrt{n}}\bigg]+\|f''\|\mathbb{E}|Y^3|\bigg\}, \\
|R_8^3|&\leq \frac{2\pi\|f^{(4)}\|}{\sqrt{m}}\mathbb{E}|X^3|\bigg[8+\bigg(15+\frac{10(\mathbb{E}Y^3)^2}{n}+\frac{15\mathbb{E}Y^4}{n^2}+\frac{\mathbb{E}Y^6}{n^3}\bigg)^{5/6}\bigg], \\
|R_9^3|&\leq \frac{\pi}{4\sqrt{n}}\bigg\{8\|f^{(4)}\|\bigg[\mathbb{E}|Y^3|\bigg[3+4\bigg(3+\frac{\mathbb{E}Y^4}{m}\bigg)^{3/4}\bigg]+\frac{\mathbb{E}Y^6}{m^{3/2}}\bigg] \\
&\quad+24\|f^{(3)}\|\bigg[2\mathbb{E}|Y^3|+\frac{\mathbb{E}|Y^5|}{n}\bigg]+3\|f''\|\bigg[2\mathbb{E}|Y^3|+\frac{\mathbb{E}Y^4}{\sqrt{n}}\bigg]\bigg\}.
\end{align*}
The bound for $R_{10}^k$ is similar to the bound for $R_8^k$ but with $\mathbb{E}X^p$ and $\mathbb{E}|X^p|$ replaced with $\mathbb{E}X^{p-2}$ and $\mathbb{E}|X^{p-2}|$ respectively.  The bound for $R_{11}^k$ is similar to the bound for $R_9^k$ but with $\mathbb{E}Y^p$ and $\mathbb{E}|Y^p|$ replaced with $\mathbb{E}Y^{p-2}$ and $\mathbb{E}|Y^{p-2}|$. respectively.

We have therefore been able to bound the terms $\frac{1}{\sqrt{n}}\mathbb{E}Sf''(W)$, $\frac{1}{\sqrt{n}}\mathbb{E}ST^2f''(W)$ and $\frac{1}{\sqrt{m}}\mathbb{E}T^3f''(W)$ to order $m^{-1/2}+n^{-1/2}$.  It therefore follows that the remainder terms $R_1,\ldots,R_7$ are of order $m^{-1}+n^{-1}$.  We showed in part I of the proof that $|\mathbb{E}h(W)-\mathrm{VG}_{1,0}^{1,0}h|\leq\sum_{k=1}^7|R_k|$, and so we have achieved the desired $O(m^{-1}+n^{-1})$ bound.  We can now sum up the remainder terms to obtain the following bound:
\[|\mathbb{E}h(W)-\mathrm{VG}^{1,0}_{1,0}h|\leq\tilde{\gamma}_{m,n}^1(X,Y)M_{1}^2(h)+\tilde{\gamma}_{m,n}^2(X,Y)M_{1}^3(h)+\tilde{\gamma}_{m,n}^3(X,Y)M_{1}^4(h),\]
where
\begin{align*}\tilde{\gamma}_{m,n}^1(X,Y) &= \frac{\pi|\mathbb{E}Y^3|}{n}(\mathbb{E}|Y^3|+2)+\frac{3\pi|\mathbb{E}X^3|}{8\sqrt{mn}}\bigg[4+2\mathbb{E}|Y^3|+\frac{2}{\sqrt{n}}+\frac{\mathbb{E}Y^4}{\sqrt{n}}\bigg], \\
\tilde{\gamma}_{m,n}^2(X,Y) &= \frac{1}{6m}(9+3|\mathbb{E}X^3|+\mathbb{E}X^4)\bigg(3+\frac{\mathbb{E}Y^4}{n}\bigg)+\frac{\pi|\mathbb{E}Y^3|}{n}\bigg[4+2\mathbb{E}|Y^3|+\frac{2}{\sqrt{n}}+\frac{\mathbb{E}Y^4}{\sqrt{n}}\bigg] \\
&\quad\frac{|\mathbb{E}Y^3|}{2n}(1+\mathbb{E}|Y^3|)+\frac{3\pi|\mathbb{E}X^3|}{\sqrt{mn}}\bigg[4+2\mathbb{E}|Y^3|+\frac{2|\mathbb{E}Y^3|}{n}+\frac{\mathbb{E}|Y^5|}{n}\bigg] \\
&\quad+\frac{\pi|\mathbb{E}Y^3|}{\sqrt{mn}}(2+\mathbb{E}|X^3|)\bigg(2+\bigg(3+\frac{\mathbb{E}Y^4}{n}\bigg)^{3/4}\bigg), \\
\tilde{\gamma}_{m,n}^3(X,Y) &= \frac{1}{24n}\bigg(3+\frac{\mathbb{E}X^4}{m}\bigg)^{3/4}\bigg(4\mathbb{E}Y^4+6|\mathbb{E}Y^3|\bigg[4+\mathbb{E}|Y^3|+\frac{4}{\sqrt{n}}+\frac{\mathbb{E}Y^4}{\sqrt{n}}\bigg]\bigg) \\
&\quad+\frac{\pi|\mathbb{E}X^3|}{m}(\mathbb{E}|X^3|+2)\bigg(8+\bigg(15+\frac{10(\mathbb{E}Y^3)^2}{n}+\frac{15\mathbb{E}Y^4}{n^2}+\frac{\mathbb{E}Y^6}{n^3}\bigg)^{5/6}\bigg) \\
&\quad+\frac{\pi|\mathbb{E}Y^3|}{4n}\bigg(2+\bigg(3+\frac{\mathbb{E}X^4}{m}\bigg)^{3/4}\bigg)\bigg(18+\bigg(9+\frac{8}{n}\bigg)\mathbb{E}Y^3|+\frac{4\mathbb{E}|Y^5|}{n}\bigg) \\
&\quad+\frac{\pi|\mathbb{E}X^3|}{\sqrt{mn}}(\mathbb{E}|Y^3|+2)\bigg(3+4\bigg(3+\frac{\mathbb{E}X^4}{m}\bigg)^{3/4}\bigg) \\
&\quad+\frac{5\pi|\mathbb{E}Y^3|}{4\sqrt{mn}}\bigg(4+2\mathbb{E}|X^3|+\frac{2}{\sqrt{m}}+\frac{\mathbb{E}X^4}{\sqrt{m}}\bigg)\bigg(6+\frac{\mathbb{E}Y^4}{n}\bigg).
\end{align*}
To complete the proof of Theorem \ref{limit svg} we simplify this bound by using that $m,n\geq 1$, as well as that for $a\geq b\geq 2$ we have $\mathbb{E}X^a\geq\mathbb{E}X^b\geq 1$, and then round all numbers up to the nearest integer.  Doing so leads to bound (\ref{svg bound}), as required. \hfill $\square$

\subsection{Extension to the case $r>1$}
For the case of $r>1$, we have the following generalisation of Theorem \ref{limit svg}:

\begin{theorem} \label{multi}
Suppose the $X_{ik}$ and $Y_{jk}$ are defined as before, each with bounded sixth moment.  Let $W_r=\frac{1}{\sqrt{mn}}\sum_{i,j,k=1}^{m,n,r}X_{ik}Y_{jk}$.  Then, for any positive integer $r$ and $h\in C_b^3(\mathbb{R})$, we have
\begin{equation}\label{thm_result4}|\mathbb{E}h(W_r)-\mathrm{VG}^{r,0}_{1,0}h|\leq r(\gamma_1(X,Y)M_{r,1}^2(h)+\gamma_2(X,Y)M_{r,1}^3(h)+\gamma_3(X,Y)M_{r,1}^4(h)),
\end{equation}
where the $M_{r,1}^i(h)$ are defined as in Theorem \ref{vgderbound}, $\mathrm{VG}^{r,0}_{1,0}h$ denotes the expectation of $h(Z)$ for $Z\sim \mathrm{VG}(r,0,1,0)$, and the $\gamma_i$ are as in Theorem \ref{limit svg}.
\end{theorem}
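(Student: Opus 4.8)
The plan is to deduce Theorem \ref{multi} from the $r=1$ case, Theorem \ref{limit svg}, by exploiting the additive structure of $W_r$ together with the convolution property of the Variance-Gamma class (Proposition \ref{tour}), which is precisely what makes $\mathrm{VG}_1(r,0,1,0)$ the limit. Writing $S^{(k)}=\frac{1}{\sqrt{m}}\sum_{i=1}^m X_{ik}$, $T^{(k)}=\frac{1}{\sqrt{n}}\sum_{j=1}^n Y_{jk}$ and $V_k=S^{(k)}T^{(k)}$, we have $W_r=\sum_{k=1}^r V_k$; the pairs $(S^{(k)},T^{(k)})$, $k=1,\ldots,r$, are independent, and each $V_k$ has the same law as the statistic $W$ of Theorem \ref{limit svg}, since $X_{ik}\stackrel{\mathcal{D}}{=}X$ and $Y_{jk}\stackrel{\mathcal{D}}{=}Y$ for all $i,j,k$.

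First I would introduce the solution $f$ of the $\mathrm{VG}_1(r,0,1,0)$ Stein equation (\ref{nicerr}) furnished by Lemma \ref{forty}, so that $\mathbb{E}h(W_r)-\mathrm{VG}^{r,0}_{1,0}h=\mathbb{E}[W_rf''(W_r)+rf'(W_r)-W_rf(W_r)]$, and record that by Theorem \ref{vgderbound} one has $\|f^{(k)}\|\le M_{r,1}^k(h)$ for $k=2,3,4$. Since $W_rf''(W_r)=\sum_{k=1}^r V_kf''(W_r)$ and likewise for the remaining two terms, the operator splits additively:
\[\mathbb{E}[W_rf''(W_r)+rf'(W_r)-W_rf(W_r)]=\sum_{k=1}^r\mathbb{E}\big[V_kf''(W_r)+f'(W_r)-V_kf(W_r)\big].\]
Fixing $k$ and conditioning on $W_r^{(k)}:=W_r-V_k=\sum_{l\ne k}V_l$, which is independent of $V_k$, and setting $g_w(x):=f(x+w)$ for a fixed value $w$ of $W_r^{(k)}$, one obtains
\[\mathbb{E}\big[V_kf''(W_r)+f'(W_r)-V_kf(W_r)\,\big|\,W_r^{(k)}=w\big]=\mathbb{E}\big[V_kg_w''(V_k)+g_w'(V_k)-V_kg_w(V_k)\big].\]
The right-hand side is exactly the quantity that the proof of Theorem \ref{limit svg} bounds, with $f$ there replaced by $g_w$ and $W$ replaced by $V_k\stackrel{\mathcal{D}}{=}W$: after its initial reduction through the Stein equation, that proof uses only that the function is four times differentiable with bounded second, third and fourth derivatives — the Taylor expansions and local couplings of Part I, and the bivariate-normal symmetry argument of Part II (including the existence of the solutions $\psi_k$ of (\ref{notts}) for the test functions $s^at^bf''(st)$ and the derivative bounds of Lemma \ref{appe33}) all involve the function only through $\|f''\|,\|f^{(3)}\|,\|f^{(4)}\|$. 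Hence $|\mathbb{E}[V_kg_w''(V_k)+g_w'(V_k)-V_kg_w(V_k)]|\le\gamma_1(X,Y)\|g_w''\|+\gamma_2(X,Y)\|g_w^{(3)}\|+\gamma_3(X,Y)\|g_w^{(4)}\|$, and $\|g_w^{(j)}\|=\|f^{(j)}\|\le M_{r,1}^j(h)$ uniformly in $w$. Taking the expectation over $w=W_r^{(k)}$ and summing the $r$ resulting identical bounds over $k$ yields (\ref{thm_result4}).

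The main obstacle is the verification underlying the key sentence above: one must check that every estimate in the (long) proof of Theorem \ref{limit svg} is genuinely insensitive to the precise identity of the function being fed in, depending on it only through the sup-norms of its derivatives of orders two through four, so that the proof applies verbatim to the translate $g_w$ (whose derivative norms coincide with those of $f$). Once this is made precise the argument is short; the remaining points — that $V_k\stackrel{\mathcal{D}}{=}W$, that $W_r^{(k)}$ is independent of $(S^{(k)},T^{(k)})$, and that one may interchange the outer expectation over $W_r^{(k)}$ with the bound, which is legitimate since the bound is uniform in $w$ and all relevant moments are finite — are routine.
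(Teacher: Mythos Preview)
Your proposal is correct and follows essentially the same route as the paper: split $W_r=\sum_k V_k$, apply the $\mathrm{VG}_1(r,0,1,0)$ Stein equation, decompose the operator as $\sum_k\mathbb{E}[V_kf''(W_r)+f'(W_r)-V_kf(W_r)]$, condition on the remaining summands, and then invoke the $r=1$ argument for the translated function $g_w(\cdot)=f(\cdot+w)$, using that $\|g_w^{(j)}\|=\|f^{(j)}\|\le M_{r,1}^j(h)$. The paper's proof is exactly this, compressed into a few lines; your explicit remark that the proof of Theorem~\ref{limit svg} depends on $f$ only through $\|f''\|,\|f^{(3)}\|,\|f^{(4)}\|$ is precisely the content of the paper's one-line justification ``since $\|g^{(n)}(x+c)\|=\|g^{(n)}(x)\|$ for any constant $c$''.
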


\begin{proof}Define $W_{(k)}=\frac{1}{\sqrt{mn}}\sum_{i,j=1}^{m,n}X_{ik}Y_{jk}$, so that $W_r=\sum_{k=1}^rW_{(k)}$.  Using the $\mathrm{VG}_1(r,0,1,0)$ Stein equation (\ref{nicerr}) we have
\begin{align*}\mathbb{E}h(W_r)-\mathrm{VG}^{r,0}_{1,0}h
&=\mathbb{E}\{W_rf''(W_r)+rf'(W_r)-W_rf(W_r)]  \\
&=\sum_{k=1}^r\mathbb{E}[W_{(k)}f''(W_r)+f'(W_r)-W_{(k)}f(W_r)]  
\end{align*}
\begin{align*}
&=\sum_{k=1}^r\mathbb{E}\Big[\mathbb{E}[W_{(k)}f''(W_r)+f'(W_r)-W_{(k)}f(W_r) \: | \: W_{(1)},\ldots,W_{(k-1)},W_{(k+1)},\ldots,W_{(r)}]\Big]. 
\end{align*}
Since $\|g^{(n)}(x+c)\|=\|g^{(n)}(x)\|$ for any constant $c$, we may use bound (\ref{svg bound}) from Theorem \ref{limit svg} and the bounds of Theorem \ref{vgderbound} for the derivatives of the solution of the $VG_1(r,0,1,0)$ Stein equation to bound the above expression, which yields (\ref{thm_result4}).
\end{proof}

\begin{remark}\emph{The terms $M_{r,1}^k(h)$, for $k=2,3,4$, are of order $r^{-1/2}$ as $r\rightarrow\infty$ (recall Theorem \ref{vgderbound}), and therefore the bound of Theorem \ref{multi} is of order $r^{1/2}(m^{-1}+n^{-1})$.  This in agreement with bound of Theorem 4.7 of Pickett \cite{pickett} for chi-square approximation, which is of order $d^{1/2}m^{-1}$.}
\end{remark}

\begin{remark} \label{undone} \emph{The premise that the test function must be smooth is vital, as a non smooth test function will enforce a square-root convergence rate (cf. Berry-Ess\'{e}en theorem).  Consider the following example in the case of a $\mathrm{VG}_1(1,0,1,0)$ random variable with test function $h \equiv \chi_{\{0\}}$.  Let $X_i$, $i=1, \ldots ,m=2k$ and $Y_j$, $j=1, \ldots ,n=2l$, be random variables taking values in the set $\{-1,1\}$ with equal probability.  Then $\mathbb{E}X_i=\mathbb{E}Y_j=0$, $\mathrm{Var} X_i=\mathrm{Var} Y_j =1$ and
\begin{align*}\mathbb{E}h(W)&=\mathbb{P}\Big(\sum\nolimits_{i,j} X_i Y_j=0\Big) \\
&= \mathbb{P}\Big(\sum\nolimits_i X_i=0\Big)+\mathbb{P}\Big(\sum\nolimits_j Y_j=0\Big)- \mathbb{P}\Big(\sum\nolimits_i X_i=0\Big)\mathbb{P}\Big(\sum\nolimits_j Y_j=0\Big)\\ 
&=\binom{2k}{k}\left(\frac{1}{2}\right)^{2k} +\binom{2l}{l}\left(\frac{1}{2}\right)^{2l}-\binom{2k}{k}\left(\frac{1}{2}\right)^{2k}\binom{2l}{l}\left(\frac{1}{2}\right)^{2l}\\
&\approx \frac{1}{\sqrt{\pi k}}+\frac{1}{\sqrt{\pi l}}-\frac{1}{\pi kl} = \sqrt{\frac{2}{\pi m}}+\sqrt{\frac{2}{\pi n}}-\frac{4}{\pi mn},
\end{align*}
by Stirling's approximation.  Furthermore, $\mathrm{VG}_{1,0}^{1,0}h =\mathbb{P}(\mathrm{VG}(1,0,1,0)=0)=0$, and hence the univariate bound (\ref{svg bound}) fails.}
\end{remark}

\subsection{Application: Binary Sequence Comparison}

We now consider a straightforward application of Theorem \ref{limit svg} to binary sequence comparison.  This example is a simple special case of a more general problem of word sequence comparison, which is of particular importance to biological sequence comparisons.  One way of comparing the sequences uses $k$-tuples (a sequence of letters of length $k$).  If two sequences are closely related, we would expect the $k$-tuple content of both sequences to be very similar. A statistic for sequence comparison based on $k$-tuple content, known as the $D_2$ statistic was suggested by Blaisdell \cite{blaisdell} (for other statistics based on $k$-tuple content see Reinert et al$.$ \cite{waterman}).  Letting $\mathcal{A}$ denote an alphabet of size $d$, and $X_{\mathbf{w}}$ and $Y_{\mathbf{w}}$ the number of occurrences of the word $\mathbf{w}\in\mathcal{A}^k$ in the first and second sequences, respectively, then the $D_2$ statistic is defined by
\[D_2=\sum_{\mathbf{w}\in\mathcal{A}^k}X_{\mathbf{w}}Y_{\mathbf{w}}.\] 

Due to the complicated dependence structure at both the local and global level (for a detailed account of the dependence structure see Reinert et al$.$ \cite{lothaire}) approximating the asymptotic distribution of $D_2$ is a difficult problem.  However, for certain parameter regimes $D_2$ has been shown to be asymptotically normal and Poisson; see Lippert et al$.$ \cite{lippert} for a detailed account of the asymptotic distributions of $D_2$ for different parameter values.  

We now consider the case of an alphabet of size $2$ with comparison based on the content of $1$-tuples.  We suppose that the sequences are of length $m$ and $n$.  We assume that the alphabet is $\{0,1\}$, and $\mathbb{P}(0 \mbox{ appears})=\mathbb{P}(1 \mbox{ appears})=\frac{1}{2}$.  Denoting the number of occurrences of $0$ in the two sequences by $X$ and $Y$, respectively, then
\[D_2=XY+(m-X)(n-Y).\]
Clearly, $X$ and $Y$ are independent binomial variables with expectation $\frac{m}{2}$ and $\frac{n}{2}$ respectively.  Since $\mathbb{E}X^2=\frac{m(m+1)}{4}$, it is easy to compute the mean and variance of $D_2$, which are given by
\[\mathbb{E}D_2=\frac{mn}{2}  \qquad \mbox{and} \qquad \mathrm{Var}D_2=\frac{mn}{4}.\] 
We now consider the standardised $D_2$ statistic,
\begin{eqnarray} \label{cox apple} W&=&\frac{D_2-\mathbb{E}D_2}{\sqrt{\mathrm{Var}D_2}} \\
&=&\frac{2}{\sqrt{mn}}\bigg(XY+(m-X)(n-Y)-\frac{mn}{2}\bigg) \nonumber \\
&=&\frac{2}{\sqrt{mn}}\bigg(2XY-mX-nY+\frac{mn}{2}\bigg) \nonumber \\
&=&\bigg(\frac{X-\frac{m}{2}}{\sqrt{\frac{m}{4}}}\bigg)\bigg(\frac{Y-\frac{n}{2}}{\sqrt{\frac{n}{4}}}\bigg). \nonumber
\end{eqnarray}
By the central limit theorem, $(X-\frac{m}{2})/\sqrt{\frac{m}{4}}$ and $(Y-\frac{n}{2})/\sqrt{\frac{n}{4}}$ are approximately $N(0,1)$ distributed.  Therefore $W$ has an approximate $\mathrm{VG}_1(1,0,1,0)$ distribution.  We now apply Theorem \ref{limit svg} to obtain a bound on the error, in a weak convergence setting, in approximating the standardised $D_2$ statistic by its limiting $\mathrm{VG}_1(1,0,1,0)$ distribution.
\newpage

\begin{theorem}\label{binaryd2}For uniform i.i.d$.$ binary sequences of lengths $m$ and $n$, the standardised $D_2$ statistic $W$, defined as in equation (\ref{cox apple}), based on $1$-tuple content is approximately $\mathrm{VG}_1(1,0,1,0)$ distributed.  Moreover, for $h\in C_b^3(\mathbb{R})$ the following bound on the error in approximating $W$ by its asymptotic distribution holds,
\begin{equation} \label{uyt}|\mathbb{E}h(W)-\mathrm{VG}_{1,0}^{1,0}h|\leq \min\{A,B\},
\end{equation}
where
\[A=\frac{9}{m}M_{1,1}^3(h)+\frac{1}{n}M_{1,1}^4(h) \quad \mbox{and} \quad B=\frac{9}{n}M_{1,1}^3(h)+\frac{1}{m}M_{1,1}^4(h),\]
where the $M_{1,1}^k(h)$ is defined as in Theorem \ref{cbbd}, and $\mathrm{VG}^{1,0}_{1,0}h$ denotes the expectation of $h(Z)$, for $Z\sim \mathrm{VG}_1(1,0,1,0)$.
\end{theorem}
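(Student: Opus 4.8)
The plan is to derive~(\ref{uyt}) as an immediate consequence of Theorem~\ref{limit svg}, the only work being to express $W$ in the product form that theorem requires and to evaluate its constants for this particular choice of summands. I would write the two uniform binary sequences as $B_1,\ldots,B_m$ and $C_1,\ldots,C_n$, independent and each uniform on $\{0,1\}$, and set $X_i=1-2B_i$ and $Y_j=1-2C_j$, so that the $X_i$ (respectively the $Y_j$) are independent, each taking the values $\pm 1$ with equal probability. Writing $X=\#\{i:B_i=0\}$ and $Y=\#\{j:C_j=0\}$ for the counts appearing in~(\ref{cox apple}), a one-line calculation gives $\tfrac{1}{\sqrt{m}}\sum_{i=1}^m X_i=(X-m/2)/\sqrt{m/4}$ and $\tfrac{1}{\sqrt{n}}\sum_{j=1}^n Y_j=(Y-n/2)/\sqrt{n/4}$, whence $W=\tfrac{1}{\sqrt{mn}}\sum_{i,j=1}^{m,n}X_iY_j$, exactly the statistic of Theorem~\ref{limit svg}. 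Since $|X_i|=|Y_j|=1$, these variables have zero mean, unit variance, and every moment equal to $0$ (odd order) or $1$ (even order); in particular the sixth moment is bounded, so Theorem~\ref{limit svg} applies for any $h\in C_b^3(\mathbb{R})$.

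Next I would substitute the moment values $\mathbb{E}X^3=\mathbb{E}Y^3=0$, $\mathbb{E}X^4=\mathbb{E}Y^4=\mathbb{E}X^6=\mathbb{E}Y^6=1$ and $\mathbb{E}|X^3|=\mathbb{E}|Y^3|=\mathbb{E}|Y^5|=1$ into the coefficients $\gamma_{m,n}^1(X,Y)$, $\gamma_{m,n}^2(X,Y)$, $\gamma_{m,n}^3(X,Y)$ of Theorem~\ref{limit svg}. Inspection shows that every summand of these three expressions carries a factor $\mathbb{E}X^3$ or $\mathbb{E}Y^3$, save for the lone term $\tfrac{9}{m}\mathbb{E}X^4\mathbb{E}Y^4$ in $\gamma_{m,n}^2$ and the lone term $\tfrac{1}{n}\mathbb{E}X^4\mathbb{E}Y^4$ in $\gamma_{m,n}^3$; hence $\gamma_{m,n}^1(X,Y)=0$, $\gamma_{m,n}^2(X,Y)=9/m$ and $\gamma_{m,n}^3(X,Y)=1/n$. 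Bound~(\ref{svg bound}) then reduces to
\[
|\mathbb{E}h(W)-\mathrm{VG}_{1,0}^{1,0}h|\le \frac{9}{m}M_{1,1}^3(h)+\frac{1}{n}M_{1,1}^4(h)=A .
\]

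Finally I would appeal to symmetry. The random variable $W$ is unchanged under interchanging the two sequences, that is under swapping $(X_i,m)$ with $(Y_j,n)$, so applying Theorem~\ref{limit svg} to this relabelled configuration yields a second valid bound for the same quantity:
\[
|\mathbb{E}h(W)-\mathrm{VG}_{1,0}^{1,0}h|\le \frac{9}{n}M_{1,1}^3(h)+\frac{1}{m}M_{1,1}^4(h)=B .
\]
Since $A$ and $B$ each bound $|\mathbb{E}h(W)-\mathrm{VG}_{1,0}^{1,0}h|$, so does $\min\{A,B\}$, which is~(\ref{uyt}); and letting $m,n\to\infty$ gives the asserted convergence of $W$ to the $\mathrm{VG}_1(1,0,1,0)$ distribution. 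I do not anticipate a genuine obstacle here: all the real work sits in Theorem~\ref{limit svg}, and this result is essentially an illustration of it. The only points requiring a little care are getting the standardisation exactly right so that $W$ matches the form in Theorem~\ref{limit svg}, and correctly tracking which of the many terms in $\gamma_{m,n}^1$, $\gamma_{m,n}^2$, $\gamma_{m,n}^3$ survive once all odd moments vanish — it is precisely that vanishing which both removes the $M_{1,1}^2(h)$ contribution and leaves the clean constants $9$ and $1$.
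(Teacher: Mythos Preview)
Your proposal is correct and follows essentially the same approach as the paper: express $W$ as $\frac{1}{\sqrt{mn}}\sum_{i,j}X_iY_j$ with $X_i,Y_j$ the $\pm1$-valued standardised indicators, then apply Theorem~\ref{limit svg} with $\mathbb{E}X^3=\mathbb{E}Y^3=0$ and $\mathbb{E}X^4=\mathbb{E}Y^4=1$. Your explicit tracking of which $\gamma$-terms survive and your invocation of the $(m,n)\leftrightarrow(n,m)$ symmetry to obtain $\min\{A,B\}$ are exactly what the paper does (the symmetry step being stated in the remark following Theorem~\ref{limit svg} rather than repeated in the proof).
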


\begin{proof}We can write the number of occurrences that letter $0$ occurs in the first sequence as $X=\sum_{i=1}^m\mathbb{I}_i$, where $\mathbb{I}_i$ is the indicator random variable that letter $0$ appears at position $i$ in the first sequence.  Similarly, the number of occurrences of letter $0$ in the second sequence is given by $Y=\sum_{j=1}^n\mathbb{J}_j$, where $\mathbb{J}_j$ is the indicator random variable that letter $0$ appears at position $j$ in the second sequence.   The standardised $D_2$ statistic $W$ may therefore be write as
\[W=\frac{D_2-\mathbb{E}D_2}{\sqrt{\mathrm{Var}D_2}}=\bigg(\frac{X-\frac{m}{2}}{\sqrt{\frac{m}{4}}}\bigg)\bigg(\frac{Y-\frac{n}{2}}{\sqrt{\frac{n}{4}}}\bigg)=\frac{1}{\sqrt{mn}}\sum_{i,j=1}^{m,n}X_iY_j,\]
where $X_i=2(\mathbb{I}_i-\frac{1}{2})$ and $Y_j=2(\mathbb{J}_j-\frac{1}{2})$.  The $X_i$ and $Y_j$ are all independent and have zero mean and unit variance.  We may therefore apply Theorem \ref{limit svg} with $\mathbb{E}X_i^3=\mathbb{E}Y_j^3=0$ and $\mathbb{E}X_i^4=\mathbb{E}Y_j^4=1$ to obtain bound (\ref{uyt}).
\end{proof}

\appendix

\section{Proofs from the text}

Here we prove the lemmas that we stated in the main text without proof. 

\subsection{Proof of Proposition \ref{norgamlap}}

For clarity, we restate the proposition.

\begin{proposition}(i) Let $\sigma>0$ and $\mu\in\mathbb{R}$ and suppose that $Z_r$ has the $\mathrm{VG}_1(r,0,\sigma/\sqrt{r},\mu)$ distribution.  Then $Z_r$ converges in distribution to a $N(\mu,\sigma^2)$ random variable in the limit $r\rightarrow\infty$.

(ii) Let $\sigma>0$ and $\mu\in\mathbb{R}$, then a $\mathrm{VG}_1(2,0,\sigma,\mu)$ random variable has the $\mathrm{Laplace}(\mu,\sigma)$ distribution with probability density function 
\begin{equation}\label{laplacepdf}p_{\mathrm{VG_1}}(x;2,0,\sigma,\mu) =\frac{1}{2\sigma} \exp\bigg(-\frac{|x-\mu|}{\sigma}\bigg), \quad x \in \mathbb{R}.
\end{equation}

(iii) Suppose that $(X,Y)$ has the bivariate normal distribution with correlation $\rho$ and marginals $X\sim N(0,\sigma_X^2)$ and $Y\sim N(0,\sigma_Y^2)$. Then the product $XY$ follows the $\mathrm{VG}_1(1,\rho\sigma_X\sigma_Y, \sigma_X\sigma_Y\sqrt{1-\rho^2},0)$ distribution.

(iv) Let $X_1,\ldots,X_r$ and $Y_1,\ldots,Y_r$ be independent standard normal random variables.  Then $\mu+\sigma\sum_{k=1}^rX_kY_k$ has the $\mathrm{VG}_1(r,0,\sigma,\mu)$ distribution.  As a special case we have that a Laplace random variable with density (\ref{laplacepdf}) has the representation $\mu+\sigma(X_1Y_1+X_2Y_2)$.

(v) The Gamma distribution is a limiting case of the Variance-Gamma distributions: for $r>0$ and $\lambda>0$, the random variable $X_{\sigma}\sim \mathrm{VG}_1(2r,(2\lambda)^{-1},\sigma,0)$ convergences in distribution to a $\Gamma(r,\lambda)$ random variable in the limit $\sigma\downarrow 0$.

(vi) Suppose that $(X,Y)$ follows a bivariate gamma distribution with correlation $\rho$ and marginals $X\sim \Gamma(r,\lambda_1)$ and $Y\sim \Gamma(r,\lambda_2)$.  Then the random variable $X-Y$ has the $\mathrm{VG}_1(2r,(2\lambda_1)^{-1}-(2\lambda_2)^{-1},(\lambda_1\lambda_2)^{-1/2}(1-\rho)^{1/2},0)$ distribution.

\end{proposition}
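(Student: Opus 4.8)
The plan is to deduce all six parts from three ingredients: the normal variance--mean mixture representation of Proposition \ref{giro}, the moment generating function of the Variance-Gamma law, and the continuity theorem (for the two limit statements). Conditioning on $V$ in Proposition \ref{giro}, using $\mathbb{E}[\mathrm{e}^{aU}]=\mathrm{e}^{a^2/2}$ for $U\sim N(0,1)$ and that $V\sim\Gamma(r/2,1/2)$ has moment generating function $s\mapsto(1-2s)^{-r/2}$, one finds that $X\sim\mathrm{VG}_1(r,\theta,\sigma,\mu)$ satisfies $\mathbb{E}[\mathrm{e}^{tX}]=\mathrm{e}^{\mu t}(1-2\theta t-\sigma^2t^2)^{-r/2}$ for $t$ in a neighbourhood of the origin; this identity is the workhorse of the argument.

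Part (iv) is immediate from Corollary \ref{vuelta} on setting $\theta=0$. For part (i), the moment generating function formula gives that $Z_r\sim\mathrm{VG}_1(r,0,\sigma/\sqrt{r},\mu)$ has moment generating function $\mathrm{e}^{\mu t}(1-\sigma^2t^2/r)^{-r/2}$, which converges to $\mathrm{e}^{\mu t+\sigma^2t^2/2}$ as $r\to\infty$; since the limit is finite near $0$, the continuity theorem yields convergence in distribution to $N(\mu,\sigma^2)$ (alternatively, for integer $r$ one invokes part (iv) and applies the classical central limit theorem to the i.i.d.\ mean-zero, unit-variance summands $X_kY_k$). For part (v), $X_\sigma\sim\mathrm{VG}_1(2r,(2\lambda)^{-1},\sigma,0)$ has moment generating function $(1-t/\lambda-\sigma^2t^2)^{-r}$, which converges as $\sigma\downarrow0$ to $(1-t/\lambda)^{-r}$, the moment generating function of $\Gamma(r,\lambda)$, so the continuity theorem again finishes the job; equivalently, in Proposition \ref{giro} one has $\theta V+\sigma\sqrt{V}U\to\theta V$ in probability while $\theta V=(2\lambda)^{-1}V\sim\Gamma(r,\lambda)$ since here $V\sim\Gamma(r,1/2)$. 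Part (ii) is a direct density computation: putting $r=2$, $\theta=0$ in the density (\ref{vgdef}) gives $p(x)=\frac{1}{\sigma\sqrt{\pi}}\big(\frac{|x-\mu|}{2\sigma}\big)^{1/2}K_{1/2}\big(\frac{|x-\mu|}{\sigma}\big)$, and inserting the elementary closed form $K_{1/2}(z)=\sqrt{\pi/(2z)}\,\mathrm{e}^{-z}$ collapses this to $\frac{1}{2\sigma}\mathrm{e}^{-|x-\mu|/\sigma}$.

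Parts (iii) and (vi) need a little more setup. For (iii), regress $Y$ on $X$: write $X=\sigma_XU$ with $U\sim N(0,1)$ and $Y=\rho(\sigma_Y/\sigma_X)X+\sqrt{1-\rho^2}\,\sigma_YZ$ with $Z\sim N(0,1)$ independent of $U$, so that $XY=\rho\sigma_X\sigma_Y\,U^2+\sqrt{1-\rho^2}\,\sigma_X\sigma_Y\,UZ$. Since $UZ\stackrel{\mathcal{D}}{=}|U|Z=\sqrt{U^2}\,Z$, with $U^2\sim\Gamma(\tfrac12,\tfrac12)$ and $Z$ independent of $U^2$, this is precisely the mixture of Proposition \ref{giro} with $r=1$, $\mu=0$, $\theta=\rho\sigma_X\sigma_Y$, $\sigma=\sigma_X\sigma_Y\sqrt{1-\rho^2}$, which is the asserted distribution. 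For (vi), take as the defining property of the bivariate gamma its Kibble-type joint moment generating function, $M_{X,Y}(s,t)=\big[(1-s/\lambda_1)(1-t/\lambda_2)-\rho\,st/(\lambda_1\lambda_2)\big]^{-r}$, valid for all $r>0$ and $\rho\in[0,1)$ (the marginals are read off by setting $s$ or $t$ to $0$, and the correlation is checked to equal $\rho$ by differentiating at the origin). Then $M_{X-Y}(t)=M_{X,Y}(t,-t)=\big[1-2t\big((2\lambda_1)^{-1}-(2\lambda_2)^{-1}\big)-\tfrac{1-\rho}{\lambda_1\lambda_2}t^2\big]^{-r}$, and matching against $(1-2\theta t-\sigma^2t^2)^{-r'/2}$ forces $r'=2r$, $\theta=(2\lambda_1)^{-1}-(2\lambda_2)^{-1}$ and $\sigma^2=(1-\rho)/(\lambda_1\lambda_2)$, giving the claim.

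The main obstacle is not computational but a matter of getting the definitions straight, and it sits in part (vi): one must commit to the precise meaning of ``bivariate gamma with correlation $\rho$'' (the Kibble bivariate gamma, whose joint moment generating function above is valid for every $r>0$) before the one-line substitution $(s,t)\mapsto(t,-t)$ delivers the result; the analogous point in part (iii) --- that the conditional-Gaussian decomposition of a bivariate normal is legitimate and that $U^2\sim\Gamma(\tfrac12,\tfrac12)$ --- is routine. Everything else is bookkeeping once the Variance-Gamma moment generating function is available.
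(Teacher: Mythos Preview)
Your proof is correct, and for most parts it agrees with the paper's argument: (ii) is the same density computation via $K_{1/2}(z)=\sqrt{\pi/(2z)}\,\mathrm{e}^{-z}$; (iii) is the same regression/decorrelation of the bivariate normal followed by an appeal to the $r=1$ mixture representation; (iv) is the same invocation of Corollary~\ref{vuelta}; and for (v) your alternative argument (let $\sigma\downarrow0$ in Proposition~\ref{giro} and rescale the Gamma) is exactly what the paper does.

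Where you genuinely diverge is in making the moment generating function $\mathbb{E}[\mathrm{e}^{tX}]=\mathrm{e}^{\mu t}(1-2\theta t-\sigma^2t^2)^{-r/2}$ the central tool. For (i), the paper instead writes $Z_r=\mu+\sigma r^{-1/2}\sum_{i=1}^r X_iY_i$ via Corollary~\ref{vuelta} and applies the classical CLT; your MGF route has the advantage of covering non-integer $r$ without further comment, whereas the paper's argument, as written, tacitly uses integer $r$. For (vi), the paper quotes an explicit density formula of Holm and Alouini for $X-Y$ and matches constants against the Variance-Gamma density; your substitution $(s,t)\mapsto(t,-t)$ in the Kibble joint MGF is shorter and self-contained, but, as you correctly flag, it requires committing to the Kibble bivariate gamma as the intended model --- the paper sidesteps this by citing a source that already made that choice. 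Both routes are sound; yours is more uniform, the paper's is more concrete in that it works directly with densities rather than transforms.
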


\begin{proof}(i) Let $X_1,X_2,\ldots$ and $Y_1,Y_2\ldots$ be independent standard normal random variables, and define $Z_r=\mu+\frac{\sigma}{\sqrt{r}}\sum_{i=1}^rX_iY_i$.  Then by Corollary \ref{vuelta} it follows that $Z_r\sim \mathrm{VG}_1(r,0,\sigma/\sqrt{r},\mu)$.  Moreover, the products $X_iY_i$, $i=1,2,\ldots$, are independent and identically random variables with mean zero and unit variance, and by the central limit theorem $\frac{1}{\sqrt{r}}\sum_{i=1}^rX_iY_i\stackrel{\mathcal{D}}{\rightarrow} N(0,1)$.  Hence, $Z_r\stackrel{\mathcal{D}}{\rightarrow} N(\mu,\sigma^2)$ as $r\rightarrow \infty$.  

(ii) This follows by applying the formula $K_{\frac{1}{2}}(x)=\sqrt{\frac{\pi}{2x}}e^{-x}$ to the density (\ref{vgdef}).

(iii) Let $\tilde{X}=\frac{X}{\sigma_X}\sim N(0,1)$ and $\tilde{Y}=\frac{Y}{\sigma_Y}\sim N(0,1)$, and define the random  variable $W$ by
\[W=\frac{1}{\sqrt{1-\rho^2}}(\tilde{Y}-\rho \tilde{X}).\]
It is straightforward to show that $W\sim N(0,1)$, and that $W$ and $\tilde{X}$ are jointly normally distributed with correlation $0$.  We can therefore express the product $Z=XY$ in terms of independent standard normal random variables $\tilde{X}$ and $W$ as follows
\[Z=XY=\sigma_X\sigma_Y \tilde{X}\tilde{Y}=\sigma_X\sigma_Y\tilde{X}(\sqrt{1-\rho^2} W+\rho \tilde{X})=\sigma_X\sigma_Y\sqrt{1-\rho^2}\tilde{X}W+\rho\sigma_X\sigma_Y \tilde{X}^2.\]
Hence, by Corollary \ref{vuelta}, we have that $Z\sim \mathrm{VG}_1(1,\rho\sigma_X\sigma_Y, \sigma_X\sigma_Y\sqrt{1-\rho^2},0)$. 

(iv) Taking $\theta=0$ in Corollary (\ref{vuelta}) leads to the general representation.  The representation for the Laplace distribution now follows from part (ii).

(v) This follows on letting $\sigma\rightarrow0$ in Proposition \ref{vgcharvg} and then using the fact that if $Y\sim\Gamma(\alpha,\beta)$ then $kY\sim\Gamma(\alpha,\beta/k)$.

(vi) Theorem 6 of Holm and Alouini \cite{holm} gives the following formula for the probability density function of $Z=U-V$:
\begin{align*}p_Z(x)&=\frac{|x|^{r-1/2}}{\Gamma(r)\sqrt{\pi}\sqrt{\beta_1\beta_2(1-\rho)}}\bigg(\frac{1}{(\beta_1+\beta_2)^2-4\beta_1\beta_2\rho}\bigg)^{\frac{2r-1}{4}}\exp\bigg(\frac{x}{2(1-\rho)}\bigg(\frac{1}{\beta_1}-\frac{1}{\beta_2}\bigg)\bigg) \\
&\quad\times K_{r-\frac{1}{2}}\bigg(|x|\frac{\sqrt{(\beta_1+\beta_2)^2-4\beta_1\beta_2\rho}}{2\beta_1\beta_2(1-\rho)}\bigg), \qquad x\in\mathbb{R},
\end{align*} 
where 
\[\beta_1=\frac{1}{\lambda_1} \qquad \text{and} \qquad \beta_2=\frac{1}{\lambda_2}.\]
We can write the density  of $Z$ as follows
\begin{align}p_Z(x)&=\frac{|x|^{r-1/2}}{\Gamma(r)\sqrt{\pi}\sqrt{\beta_1\beta_2(1-\rho)}}\bigg(\frac{1}{(\beta_1+\beta_2)^2-4\beta_1\beta_2\rho}\bigg)^{\frac{2r-1}{4}}\exp\bigg(x\cdot\frac{\frac{1}{2}(\beta_1-\beta_2)}{\beta_1\beta_2(1-\rho)}\bigg) \nonumber \\
&\label{vgpdfholm}\quad\times K_{r-\frac{1}{2}}\bigg(|x|\frac{\sqrt{[(\beta_1-\beta_2)/2]^2-\beta_1\beta_2(1-\rho)}}{\beta_1\beta_2(1-\rho)}\bigg), \qquad x\in\mathbb{R}.
\end{align} 
Comparing (\ref{vgpdfholm}) with the Variance-Gamma density function (\ref{vgdef}), we see that $Z$ has a $\mathrm{VG}_1(2r,\theta,\sigma,0)$ distribution, where $\theta$ and $\sigma$ are given by
\begin{eqnarray*}\theta &=&\frac{\beta_1-\beta_2}{2}=\frac{1}{2\lambda_1}-\frac{1}{2\lambda_2}, \\
\sigma &=&\sqrt{\beta_1\beta_2(1-\rho)}=\sqrt{\frac{1-\rho}{\lambda_1\lambda_2}},
\end{eqnarray*}
as required.
\end{proof}  

\subsection{Proof of Lemma \ref{forty}}
We begin by proving that there is at most one bounded solution to the Variance-Gamma Stein equation (\ref{eighty}) when $\nu\geq 0$.  Suppose $u$ and $v$ are solutions to the Stein equation that satisfy $\|u^{(k)}\|, \textrm{ } \|v^{(k)}\| < \infty$.  Define $w=u-v$.  Then $w$ satisfies $\|w^{(k)}\| = \|u^{(k)} - v^{(k)} \| \leq  \|u^{(k)}\| + \|v^{(k)}\| < \infty$, and is a solution to the following differential equation
\[ xw'' (x) + (2\nu+1 +2\beta x)w' (x) +((2\nu+1)\beta -(1-\beta^2)x)w(x) = 0.\]
This homogeneous differential equation has general solution
\[ w(x) = A\mathrm{e}^{-\beta x}x^{-\nu} K_{\nu} (x) + B\mathrm{e}^{-\beta x}x^{-\nu} I_{\nu} (x).\]
From the asymptotic formula (\ref{roots}) for $I_{\nu}(x)$, it follows that to have a bounded solution we must take $B=0$.  From the asymptotic formula (\ref{Ktend0}) for $K_{\nu}(x)$, we see that $w(x)$ has a singularity at the origin if $\nu\geq 0$.  Therefore if $\nu\geq 0$, then for $w(x)$ to be bounded we must take $A=0$, and therefore $w=0$ and so $u=v$.  

We now use variation of parameters (see Collins \cite{collins}) to solve the Stein equation equation (\ref{eighty}).  The method allows us to solve differential equations of the form
\[v''(x)+p(x)v'(x)+q(x)v(x)=g(x).\]
Suppose $v_1(x)$ and $v_2(x)$ are linearly independent solutions of the homogeneous equation 
\[v''(x)+p(x)v'(x)+q(x)v(x)=0.\] 
Then the general solution to the inhomogeneous equation is given by
\[v(x)=-v_1(x)\int_a^x\frac{v_2(t)g(t)}{W(t)}\,\mathrm{d}t+v_2(x)\int_b^x\frac{v_1(t)g(t)}{W(t)}\,\mathrm{d}t,
\]
where $a$ and $b$ are arbitrary constants and $W(t)=W(v_1,v_2)=v_1v_2'-v_2v_1'$ is the Wronskian.

It is easy to verify that a pair of linearly independent solutions to the homogeneous equation
\[f''(x)+\bigg(\frac{(2\nu +1)}{x} +2\beta \bigg)f'(x)+\bigg(\frac{(2\nu +1)\beta}{x} -(1-\beta^2)\bigg)f(x)=0\]
are $\mathrm{e}^{-\beta x}x^{-\nu}K_{\nu}(x)$ and $\mathrm{e}^{-\beta x}x^{-\nu}I_{\nu}(x)$.  However, we take $f_1(x)=\mathrm{e}^{-\beta x}|x|^{-\nu}K_{\nu}(|x|)$ and $f_2(x)=\mathrm{e}^{-\beta x}|x|^{-\nu}I_{\nu}(|x|)$ as our linearly independent solutions to the homogeneous equation.  It will become clear later why this is a more suitable basis of solutions to the homogeneous equation.  We now show that $f_1$ and $f_2$ are indeed linearly independent solutions to the homogeneous equation.  From (\ref{defI}) we have
\[\frac{I_{\nu}(|x|)}{|x|^{\nu}}=\frac{1}{|x|^{\nu}}\sum_{k=0}^{\infty}\frac{1}{\Gamma(\nu+k+1)k!}\left(\frac{|x|}{2}\right)^{\nu+2k}=\sum_{k=0}^{\infty}\frac{1}{\Gamma(\nu+k+1)k!}\left(\frac{x}{2}\right)^{2k}=\frac{I_{\nu}(x)}{x^{\nu}}.\]
Formula (\ref{imre}) states that $K_{\nu}(-x)=(-1)^{\nu}K_{\nu}(x)-\pi i I_{\nu}(x)$ and therefore
\[\frac{K_{\nu}(-x)}{(-x)^{\nu}}=\frac{K_{\nu}(x)}{x^{\nu}}-\frac{\pi i}{(-1)^{\nu}}\frac{I_{\nu}(x)}{x^{\nu}},\]
and so
\[\frac{\mathrm{e}^{-\beta x}K_{\nu}(|x|)}{|x|^{\nu}}=\frac{\mathrm{e}^{-\beta x}K_{\nu}(x)}{x^{\nu}}-\frac{\pi i}{(-1)^{\nu}}\frac{\mathrm{e}^{-\beta x}I_{\nu}(x)}{x^{\nu}}\chi_{(-\infty,0]}(x).\]
Since $\mathrm{e}^{-\beta x}x^{\nu}I_{\nu}(x)$ is a solution to the homogeneous equation that is linearly independent of $\mathrm{e}^{-\beta x}x^{\nu}K_{\nu}(x)$, it follows that $\mathrm{e}^{-\beta x}|x|^{\nu}K_{\nu}(|x|)$ is a solution to the homogeneous equation.

From (\ref{diffKii}) and (\ref{diffIii}) we have
\[\frac{\mathrm{d}}{\mathrm{d}x}\left(\frac{K_{\nu}(|x|)}{|x|^{\nu}}\right)=-\frac{K_{\nu+1}(|x|)}{x|x|^{\nu -1}}, \quad \frac{\mathrm{d}}{\mathrm{d}x}\left(\frac{I_{\nu}(|x|)}{|x|^{\nu}}\right)=\frac{I_{\nu+1}(|x|)}{x|x|^{\nu -1}},\]
and therefore
\[W(x)=\frac{\mathrm{e}^{-2\beta x}(I_{\nu}(|x|)K_{\nu+1}(|x|)+K_{\nu}(|x|)I_{\nu+1}(|x|))}{x|x|^{2\nu-1}}=\frac{\mathrm{e}^{-2\beta x}}{x|x|^{2\nu}},\]
where we used (\ref{wront}) to obtain the equality in the above display.  Therefore the general solution to the inhomogeneous equation is given by
\begin{align*}f(x)&=-\frac{\mathrm{e}^{-\beta x}K_{\nu}(|x|)}{|x|^{\nu}}\int_a^x\mathrm{e}^{\beta y} |y|^{\nu}I_{\nu}(|y|)[h(y)- \widetilde{\mathrm{VG}}_{\beta,0}^{\nu,1}h]\,\mathrm{d}y \\
&\quad+\frac{\mathrm{e}^{-\beta x}I_{\nu}(|x|)}{|x|^{\nu}}\int_b^x\mathrm{e}^{\beta y} |y|^{\nu}K_{\nu}(|y|)[h(y)- \widetilde{\mathrm{VG}}_{\beta,0}^{\nu,1}h]\,\mathrm{d}y.
\end{align*}

This solution is clearly bounded everywhere except possibly for $x=0$ or in the limits $x \rightarrow \pm \infty$.  We therefore choose $a$ and $b$ so that our solution is bounded at these points and thus for all real $x$.  To ensure the solution is bounded at the origin we must take $a=0$.  We choose $b$ so that the solution is bounded in the limits $x \rightarrow \pm \infty$.  If we take $b =\infty$ then we obtain solution (\ref{ink}).  Taking $b=-\infty$ would lead to the same solution (see Remark \ref{mark}).  

Solution (\ref{ink}) is a candidate bounded solution, and we now verify that this solution and its first derivative are indeed bounded for all $x\in\mathbb{R}$.  Straightforward calculations show that, for $x\geq 0$,
\begin{eqnarray}\|f\| &\leq&\|\tilde{h}\|\bigg|\frac{\mathrm{e}^{-\beta x} K_{\nu}(x)}{x^{\nu}}\int_0^{x} \mathrm{e}^{\beta y}y^{\nu} I_{\nu}(y)\,\mathrm{d}y \bigg| +\|\tilde{h}\|\bigg|\frac{\mathrm{e}^{-\beta x}I_{\nu}(x)}{x^{\nu}} \int_x^{\infty} \mathrm{e}^{\beta y}y^{\nu} K_{\nu}(y)\,\mathrm{d}y\bigg|, \nonumber \\
\|f'\| &\leq&\|\tilde{h}\|\bigg|\frac{\mathrm{d}}{\mathrm{d}x}\bigg(\frac{\mathrm{e}^{-\beta x}K_{\nu}(x)}{x^{\nu}}\bigg)\int_0^x \mathrm{e}^{\beta y}y^{\nu} I_{\nu}(y)\,\mathrm{d}y \bigg| \nonumber\\
&& +\|\tilde{h}\|\bigg|\frac{\mathrm{d}}{\mathrm{d}x}\bigg(\frac{\mathrm{e}^{-\beta x}I_{\nu }(x)}{x^{\nu}}\bigg) \int_x^{\infty} \mathrm{e}^{\beta y}y^{\nu} K_{\nu}(y)\,\mathrm{d}y\bigg|,  \nonumber 
\end{eqnarray}
where $\tilde{h}=h(x)-\widetilde{\mathrm{VG}}^{\nu,1}_{\beta,0}h$.  From inequalities (\ref{vgel11}), (\ref{vgel22}) and (\ref{vgel33}) it follows that the expressions involving modified Bessel functions are bounded for all $x\geq 0$.  Recalling Remark \ref{mark}, it is sufficient to bound these expressions in the region $x \geq0$ and then consider the case of both positive and negative $\beta$, and so we have shown the $f$ and its first derivative are bounded for all $x\in\mathbb{R}$. 

\subsection{Proof of Lemma \ref{lemsec4}}
Taylor expanding $f''(W)$ about $ST_j$ gives
\begin{align*}\mathbb{E}ST^2f''(W)&=\frac{1}{\sqrt{n}}\sum_{j=1}^n\mathbb{E}Y_j\bigg(T_j+\frac{1}{\sqrt{n}}Y_j\bigg)Sf''(W) \\
&=\frac{1}{\sqrt{n}}\sum_{j=1}^n\mathbb{E}Y_j\bigg(T_j+\frac{1}{\sqrt{n}}Y_j\bigg)S\bigg(f''(ST_j) \\
&\quad+\frac{1}{\sqrt{n}}Y_jSf^{(3)}(ST_j) 
+\frac{1}{2n}Y_j^2S^2f^{(4)}(ST_j^{[1]})\bigg) \\
&=R_{1}+N_1+R_{2},
\end{align*}
where
\begin{eqnarray*}R_{1}&=&\frac{1}{n}\sum_{j=1}^n\mathbb{E}Sf''(ST_j), \\
N_1&=&\frac{1}{n}\sum_{j=1}^n\mathbb{E}S^2\bigg(Y_j^2T_j+\frac{1}{\sqrt{n}}Y_j^3\bigg)f^{(3)}(ST_j), \\
|R_{2}|&\leq&\frac{\|f^{(4)}\|}{2n^{3/2}}\sum_{j=1}^n\mathbb{E}S^3\bigg(Y_j^3T_j+\frac{1}{\sqrt{n}}Y_j^4\bigg)\leq\frac{\|f^{(4)}\|}{2\sqrt{n}}\bigg(\mathbb{E}|Y^3|+\frac{\mathbb{E}Y^4}{\sqrt{n}}\bigg)\bigg(3+\frac{\mathbb{E}X^4}{m}\bigg)^{3/4}.
\end{eqnarray*}
Here we used that $\mathbb{E}Y_j=0$ and $\mathbb{E}Y_j^2=1$ to simplify $R_{1}$.  To bound $R_{1}$ we Taylor expand $f''(ST_j)$ about $W$ to obtain
\[|R_{1}|\leq|\mathbb{E}Sf''(W)|+\frac{\|f^{(3)}\|}{\sqrt{n}}.\]
We now consider bound $N_{1}$.  Using independence and that $\mathbb{E}Y_j=0$ and $\mathbb{E}Y_j^2=1$ gives
\[N_{1}=\frac{1}{n}\sum_{j=1}^n\mathbb{E}S^2Tf^{(3)}(W_j)+\frac{\mathbb{E}Y^3}{n^{3/2}}\sum_{j=1}^n\mathbb{E}S^2f^{(3)}(ST_j).
\]
Taylor expanding the $f^{(3)}(ST_j)$ about $W$ allows us to write $N_1$ as
\[N_{1}=\mathbb{E}S^2Tf^{(3)}(W)+R_3,\]
where
\begin{align*}|R_3|&=\bigg|\frac{\mathbb{E}Y^3}{n^{3/2}}\sum_{j=1}^n\mathbb{E}S^2f^{(3)}(ST_j)-\frac{1}{n^{3/2}}\sum_{j=1}^n\mathbb{E}Y_jS^3\bigg(T_j+\frac{1}{\sqrt{n}}Y_j\bigg)f^{(4)}(ST_j^{[1]})\bigg|\\
&\leq\frac{\|f^{(3)}\|\mathbb{E}|Y^3|}{\sqrt{n}}+\frac{\|f^{(4)}\|}{\sqrt{n}}\bigg(1+\frac{1}{\sqrt{n}}\bigg)\bigg(3+\frac{\mathbb{E}X^4}{m}\bigg)^{3/4}.
\end{align*}
Putting this together, we have shown that
\[\mathbb{E}ST^2f''(W)=\mathbb{E}S^2Tf^{(3)}(W)+R_1+R_2+R_3.\]
Rearranging and apply the triangle inequality now gives
\[|\mathbb{E}S^2Tf^{(3)}(W)|\leq |\mathbb{E}ST^2f''(W)|+|R_1|+|R_2|+|R_3|,\]
and summing up the remainder terms completes the proof.

\subsection{Proof of Lemma \ref{appe33}}
We prove that inequality (\ref{2ones}) holds; inequality (\ref{256ones}) then follows by symmetry.  We begin by obtaining a formula for the third order partial derivative of $\psi$ with respect to $s$.  Using a straightforward generalisation of the proof of Lemma 3.2 of Rai\v{c} \cite{raic} it can be shown that
\begin{equation}\label{1ones}\frac{\partial^3\psi}{\partial s^3} =\int_0^{\infty}\!\int_{\mathbb{R}^2}\frac{\mathrm{e}^{-u}}{\sqrt{1-\mathrm{e}^{-2u}}}\frac{\partial^2}{\partial s^2}g(z_s,z_t)\phi'(x)\phi(y)\,\mathrm{d}x\,\mathrm{d}y\,\mathrm{d}u,
\end{equation}
where $z_s=\mathrm{e}^{-u}s+\sqrt{1-\mathrm{e}^{-2u}}x$, $z_t= \mathrm{e}^{-u}t+\sqrt{1-\mathrm{e}^{-2u}}y$, and $\phi(x)=\frac{1}{\sqrt{2\pi}}\mathrm{e}^{-\frac{1}{2}x^2}$ so that $\phi'(x)=-x\phi(x)$.
We now calculate the second order partial derivative of $g$ with respect to $s$.  Since
\[\frac{\partial z_s}{\partial s}=\frac{\partial z_t}{\partial t}=\mathrm{e}^{-u} \qquad \mbox{and} \qquad \frac{\partial z_t}{\partial s}=\frac{\partial z_s}{\partial t}=0,\]
we have that
\[\frac{\partial^2g}{\partial s^2}=\mathrm{e}^{-2s}\{z_s^az_t^{b+2}f^{(4)}(z_sz_t)+2az_s^{a-1}z_t^{b+1}f^{(3)}(z_sz_t)+a(a-1)z_s^{a-2}z_t^bf''(z_sz_t)\}.
\]
We now use the simple inequality that $|p+q|^n\leq 2^{n-1}(|p|^n+|q|^n)$ to obtain the following bound on $z_s$
\[|z_s^n|=|\mathrm{e}^{-u}s+\sqrt{1-\mathrm{e}^{-2u}}x|^n
\leq 2^{n-1}(\mathrm{e}^{-nu}|s|^n+(1-\mathrm{e}^{-2u})^{n/2}|x|^n)
\leq 2^{n-1}(|s|^n+|x|^n)
\]
and a similar inequality holds for $z_t$.  With these inequalities we have the following bound
\begin{align*}\bigg|\frac{\partial^2g}{\partial s^2}\bigg|&\leq \mathrm{e}^{-2u}\{2^{a+b}\|f^{(4)}\|(|s|^a+|x|^a)(|t|^{b+2}+|y|^{b+2}) \\
&\quad+2a2^{a+b-2}\|f^{(3)}\|(|s|^{a-1}+|x|^{a-1})(|t|^{b+1}+|y|^{b+1}) \\
&\quad+a(a-1)2^{a+b-4}\|f''\|(|s|^{a-2}+|x|^{a-2})(|t|^b+|y|^b)\}.
\end{align*}
Applying this bound to equation (\ref{1ones}) gives the following bound on the third order partial derivative of $\psi$ with respect to $s$:
\begin{align}\bigg|\frac{\partial^3\psi}{\partial s^3}\bigg|&\leq\quad\int_0^{\infty}\!\int_{\mathbb{R}^2}\frac{\mathrm{e}^{-u}}{\sqrt{1-\mathrm{e}^{-2u}}}\bigg|\frac{\partial^2g}{\partial s^2}\bigg||x|\phi(x)\phi(y)\,\mathrm{d}x\,\mathrm{d}y\,\mathrm{d}u \nonumber \\
&\leq \int_0^{\infty}\int_{\mathbb{R}^2}\frac{\mathrm{e}^{-3u}}{\sqrt{1-\mathrm{e}^{-2u}}}\{2^{a+b}\|f^{(4)}\|(|s|^a+|x|^a)(|t|^{b+2}+|y|^{b+2}) \nonumber \\
&\quad+a2^{a+b-1}\|f^{(3)}\|(|s|^{a-1}+|x|^{a-1})(|t|^{b+1}+|y|^{b+1}) \nonumber \\
&\quad+a(a-1)2^{a+b-4}\|f''\|(|s|^{a-2}+|x|^{a-2})(|t|^b+|y|^b)\}|x|\phi(x)\phi(y)\,\mathrm{d}x\,\mathrm{d}y\,\mathrm{d}u \nonumber \\
&=\frac{\pi}{4}\int_{\mathbb{R}^2}\{2^{a+b}\|f^{(4)}\|(|s|^a+|x|^a)(|t|^{b+2}+|y|^{b+2}) \nonumber\\
&\quad+a2^{a+b-1}\|f^{(3)}\|(|s|^{a-1}+|x|^{a-1})(|t|^{b+1}+|y|^{b+1}) \nonumber \\
\label{3ones} &\quad+a(a-1)2^{a+b-4}\|f''\|(|s|^{a-2}+|x|^{a-2})(|t|^b+|y|^b)\}|x|\phi(x)\phi(y)\,\mathrm{d}x\,\mathrm{d}y,
\end{align}
where the final equality follows from the formula $\int_0^{\infty}\frac{\mathrm{e}^{-3u}}{\sqrt{1-\mathrm{e}^{-2u}}}\,\mathrm{d}u=\frac{\pi}{4}$ (see Gaunt \cite{gaunt thesis}, p. 19).  We can now obtain the desired bound by using the following formula (see formula 17 of Winkelbauer \cite{winkelbauer}) to evaluate (\ref{3ones}):
\begin{eqnarray*}\int_{-\infty}^{\infty}|x|^k\phi(x)\,\mathrm{d}x&=&\frac{2^{k/2}\Gamma(\frac{k+1}{2})}{\sqrt{\pi}}=(k-1)!!\begin{cases} \sqrt{\frac{2}{\pi}}, & \quad k \mbox{ odd}, \\
1, & \quad k \mbox { even,} \end{cases} \\
&\leq& (k-1)!!,
\end{eqnarray*}
which completes the proof. 

\section{Elementary properties of modified Bessel functions}

Here we list standard properties of modified Bessel functions that are used throughout this paper.  All these formulas can be found in Olver et al$.$ \cite{olver}, except for the inequalities, which are given in Gaunt \cite{gaunt bes}.

\subsection{Definitions}
The \emph{modified Bessel function of the first kind} of order $\nu \in \mathbb{R}$ is defined, for all $x\in\mathbb{R}$, by
\begin{equation}\label{defI}I_{\nu} (x) = \sum_{k=0}^{\infty} \frac{1}{\Gamma(\nu +k+1) k!} \left( \frac{x}{2} \right)^{\nu +2k}.
\end{equation}

The \emph{modified Bessel function of the second kind} of order $\nu \in \mathbb{R}$ can be defined in terms of the modified Bessel function of the first kind as follows
\begin{eqnarray*}K_{\nu} (x) &=& \frac{\pi}{2 \sin (\nu \pi)} (I_{-\nu}(x) - I_{\nu} (x)), \quad \nu \not= \mathbb{Z}, \: x \in\mathbb{R}, \\
K_{\nu} (x) &=& \lim_{\mu\to\nu} K_{\mu} (x) = \lim_{\mu\to\nu} \frac{\pi}{2 \sin (\mu \pi)} (I_{-\mu}(x) - I_{\mu} (x)), \quad \nu \in \mathbb{Z}, \: x \in\mathbb{R}.
\end{eqnarray*}

\subsection{Basic properties}
For $\nu\in\mathbb{R}$, the modified Bessel function of the first kind $I_{\nu}(x)$ and the modified Bessel function of the second kind $K_{\nu}(x)$ are regular functions of $x$.  For $\nu> -1$ and $x>0$ we have $I_{\nu}(x)>0$ and $K_{\nu}(x)>0$.  For all $\nu\in\mathbb{R}$ the modified Bessel function $K_{\nu}(x)$ is complex-valued in the region $x<0$.

\subsection{Asymptotic expansions}
\vspace{-7mm}
\begin{eqnarray}
\label{Ktend0}K_{\nu} (x) &\sim& \begin{cases} 2^{|\nu| -1} \Gamma (|\nu|) x^{-|\nu|}, & \quad x \downarrow 0, \: \nu \not= 0, \\
-\log x, & \quad x \downarrow 0, \: \nu = 0, \end{cases} \\
 \label{roots} I_{\nu} (x) &\sim& \frac{e^x}{\sqrt{2\pi x}}, \quad x \rightarrow \infty, \\
\label{Ktendinfinity} K_{\nu} (x) &\sim& \sqrt{\frac{\pi}{2x}} \mathrm{e}^{-x}, \quad x \rightarrow \infty.
\end{eqnarray}

\subsection{Identities}
\vspace{-7mm}
\begin{eqnarray} \label{wront} I_{\nu}(x)K_{\nu+1}(x)-I_{\nu+1}(x)K_{\nu}(x)&=&\frac{1}{x}, \\
 \label{imre} (-1)^{\nu}K_{\nu}(x)-\pi i I_{\nu}(x)&=&K_{\nu}(-x).
\end{eqnarray}

\subsection{Differentiation}
\vspace{-7mm}
\begin{eqnarray}
\label{cat} K_{\nu}'(x)&=&-\frac{1}{2}(K_{\nu+1}(x)-K_{\nu-1}(x)), \\
\label{diffIii}\frac{\mathrm{d}}{\mathrm{d}x} \left(\frac{I_{\nu} (x)}{x^{\nu}} \right) &=& \frac{I_{\nu +1} (x)}{x^{\nu}}, \\
\label{diffKii} \frac{\mathrm{d}}{\mathrm{d}x} \left(\frac{K_{\nu} (x)}{x^{\nu}} \right) &=& -\frac{K_{\nu +1} (x)}{x^{\nu}}, 
\end{eqnarray}

\subsection{Modified Bessel differential equation}
The modified Bessel differential equation is 
\begin{equation} \label{realfeel} x^2 f''(x) + xf'(x) - (x^2 +\nu^2)f(x) =0.\end{equation}
The general solution is $f(x)=AI_{\nu} (x) +BK_{\nu} (x).$

\subsection{Inequalities}
Let $-1<\beta<1$ and $n=0,1,2,\ldots$, then for $x\geq 0$ we have
\begin{align}\label{vgel11}\bigg|\frac{\mathrm{d}}{\mathrm{d}x}\bigg(\frac{\mathrm{e}^{-\beta x}K_{\nu}(x)}{x^{\nu}}\bigg)\int_0^x\mathrm{e}^{\beta t}t^{\nu}I_{\nu}(x)\,\mathrm{d}t\bigg|&<\frac{2(\beta+1)}{2\nu+1}xK_{\nu+1}(x)I_{\nu}(x)<\infty, \:\: \nu>-\frac{1}{2}, \\ 
\label{vgel22}\left| \frac{\mathrm{d}^n}{\mathrm{d}x^n}\bigg(\frac{\mathrm{e}^{-\beta x}I_{\nu}(x)}{x^{\nu}}\bigg) \int_x^{\infty} \mathrm{e}^{\beta t}t^{\nu} K_{\nu}(t)\,\mathrm{d}t\right| &< \frac{ \sqrt{\pi} \Gamma(\nu +1/2)}{(1-\beta^2)^{\nu+1/2}\Gamma(\nu+1)}, \quad \nu \geq \frac{1}{2},  \\
\label{vgel33}\left| \frac{\mathrm{d}^n}{\mathrm{d}x^n}\bigg(\frac{\mathrm{e}^{-\beta x}I_{\nu}(x)}{x^{\nu}}\bigg) \int_x^{\infty} \mathrm{e}^{\beta t}t^{\nu} K_{\nu}(t)\,\mathrm{d}t\right|&<\frac{(\mathrm{e}+1)2^{2\nu}\Gamma(\nu+1/2)}{1-|\beta|}, \quad |\nu|<\frac{1}{2}. 
\end{align}

\section*{Acknowledgements}

During the course of this research the author was supported by an EPSRC DPhil Studentship and an EPSRC Doctoral Prize.  The author would like to thank Gesine Reinert for the valuable guidance she provided on this project.  The author would also like to thank two anonymous referees for their helpful comments which have lead to a substantial improvement in the presentation of this paper.



\end{document}